\author{Bogdan Alecu \and Vadim Lozin}
\date{}
\title{Understanding lettericity I: a structural hierarchy}
\tikzstyle{vertex}=[circle,fill=black!100,text=white,inner sep=0.8mm]
\tikzstyle{point}=[circle,fill=black,inner sep=0.1mm]
\DeclareMathOperator{\lett}{let}
\DeclareMathOperator{\av}{Av}
\DeclareMathOperator{\grid}{Grid}
\DeclareMathOperator{\geom}{Geom}
\DeclareMathOperator{\conf}{conf}
\DeclareMathOperator{\free}{Free}
\begin{document}
	\maketitle
	
	\newtheorem{proposition}{Proposition}
	\newtheorem{theorem}{Theorem}
	\newtheorem{lemma}{Lemma}
	\newtheorem{corollary}{Corollary}
	\newtheorem{result}{Result}
	\theoremstyle{definition}
	\newtheorem{definition}{Definition}
	\newtheorem{example}{Example}
	\newtheorem{remark}{Remark}
	\newtheorem{conjecture}{Conjecture}
	\newtheorem{notation}{Notation}
	\newtheorem{problem}{Open problem}
	
	\begin{abstract}
{\em Lettericity} is a graph parameter introduced by Petkov\v sek in \cite{letter-graphs} in order to study well-quasi-orderability under the induced subgraph relation.
In the world of permutations, {\em geometric griddability} was independently introduced in \cite{geometric}, partly as an enumerative tool. 
Despite their independent origins, those two notions share a connection: they highlight very similar structural features in their respective objects. 
The fact that those structural features arose separately on two different occasions makes them very interesting to study in their own right. 

In the present paper, we explore the notion of lettericity through the lens of the ``minimal obstructions'', i.e., minimal classes of graphs of unbounded lettericity,
and identify an infinite collection of such classes. We also discover an intriguing structural hierarchy that arises in the study of lettericity and that of griddability. 
\end{abstract}


\section{Introduction}


In 2002, Petkov\v sek published a paper \cite{letter-graphs} in which he introduced the notion of {\it letter graphs} and a related parameter {\it graph lettericity}. 
This publication was not observed by the research community until 2011, when five papers simultaneously cited
the work of Petkov\v sek. Since then, the notion of letter graphs attracted the attention of many researchers in the field of graph theory. 

In 2013, a group of people working in the area of permutations introduced the notion of {\em geometric griddability}
\cite{geometric}. This notion also attracted considerable attention from researchers in the field. 
However, it was not until 2020 that a close relationship between the two notions - letter graphs and geometric griddability - was discovered and described, first, in one direction \cite{3let},
and then in the other \cite{bddletgg}. Informally, this relationship can be characterised as follows: graph lettericity, restricted to the class of permutation graphs, and geometric griddability
describe the same concept in the language of graphs and permutations, respectively.

Both notions capture important structural properties leading, in particular, to well- (and even better-) quasi-ordering of graphs (by induced subgraphs) and permutations (by pattern containment),
which is a highly desirable but rare property. The fact that the two notions have been introduced independently of each other emphasizes the importance of both of them.
However, our understanding of the two notions remains obscure. The aim of the project initiated by this paper is to fill this lack of knowledge.
 
In the present paper, we address a problem that in the language of permutations can be succinctly stated as follows (the relevant terms will be defined later in Section~\ref{sec:pre}):
\begin{problem} \label{prob:gg}
Characterise geometrically griddable classes of permutations.		
\end{problem}
In the language of graphs, the same problem becomes:
\begin{problem} \label{prob:bddlet}
Characterise graph classes of bounded lettericity.
\end{problem}
Our hope is that this paper will serve as a miniature survey on the problem -- a point from which we may rally our efforts in order to make meaningful progress on it. 
Another goal of this paper is to provide a quick introduction to the problem for those who might be interested in it for its own sake. 
To this end, we attempt to give the minimum amount of background necessary, while trying to paint a picture that is as complete as possible. The paper is organised as follows:
\begin{itemize}
		\item In Section~\ref{sec:pre}, we will introduce the relevant definitions, notation, and terminology, as well as some existing results.
		
		\item Those existing results fit into a (not yet completely understood) hierarchy of structural features; a natural extension of the original problem is to understand the transitions between the different layers of this hierarchy. This discussion is presented in Section~\ref{sec:hierarchy}.
		
		\item In Section~\ref{sec:new-results}, we present our current progress on filling in some of the remaining gaps in our understanding. 
		
		\item Section~\ref{sec:loh} attempts to introduce a more natural terminology for dealing with one of the transitions in the hierarchy.
	
	\end{itemize}

	
\section{Preliminaries}
\label{sec:pre}	

	\subsection{Common preliminaries}
	
	Unless otherwise specified, the graphs in this paper are {\em simple} (that is, undirected, without loops or multiple edges). 
	The vertex set and the edge set of a graph $G$ are denoted $V(G)$ and $E(G)$, respectively. 
	The {\em neighbourhood} of a vertex $x\in V(G)$, denoted $N_G(x)$ (the subscript is omitted when it is clear from context), is the set of vertices adjacent to $x$.
	The {\em degree} of $x$, denoted $\deg(x)$, is the size of its neighbourhood. 
	
	As usual, $K_n,P_n$ and $C_n$ denote a complete graph, a chordless path and a chordless cycle on $n$ vertices, respectively.  
	By $nG$ we denote the disjoint union of $n$ copies of $G$, and $\overline{G}$ denotes the complement of $G$.
	
	The subgraph of $G$ induced by a set $U\subseteq V(G)$ is denoted $G[U]$. 
	If $G$ does not contain an induced subgraph isomorphic to a graph $H$, we say that $G$ is $H$-free, or that $G$ excludes $H$,
	or that $H$ is a forbidden induced subgraph for $G$.  A {\em homogeneous set} is a subset $U$ of $V(G)$ such that $G[U]$ is either complete or edgeless. A {\em class} (or {\em family} or {\em property}) of graphs is a collection of graphs closed under isomorphisms. It is {\em hereditary} if it is closed under taking induced subgraphs. From general theory, any hereditary class $\mathcal X$ can be uniquely described by its set of minimal forbidden induced subgraphs (that is, the graphs not in $\mathcal X$ minimal under the induced subgraph relation).  
	
	We will sometimes work simultaneously with simple graphs, and with certain auxiliary directed graphs. To mitigate ambiguity, we will refer to vertices as ``nodes'' and edges as ``arcs'' in the directed setting.
	
	A {\em partial order} on a set $X$ is a binary relation that is reflexive, antisymmetric and transitive. A set $X$ together with a partial order $\leq$ on $X$ is called a {\em poset}. A partial order $\leq$ on $X$ is said to be {\em total}, or {\em linear}, if any two elements of $X$ are comparable (that is, if for any $x, y \in X$, we have either $x \leq y$ or $y \leq x$). Given a poset $(X, \leq)$, a {\em chain} is a set of pairwise comparable elements (that is, a subset of $X$ totally ordered by $\leq$). An {\em ascending}, respectively {\em descending} chain is a (finite or infinite) sequence $x_1, x_2, \dots$ with $x_1 \leq x_2 \leq \dots$, respectively $x_1 \geq x_2 \geq \dots$. An {\em antichain} is a set of pairwise incomparable elements.
	
	A poset $(X, \leq)$ is well-founded if it contains no infinite strictly descending chain. It is well-quasi-ordered (``wqo'' for short) if it is well-founded, and it contains no infinite antichains.

	\subsection{Letter graphs}

	The notion of letter graphs was introduced in \cite{letter-graphs}. Our terminology differs only superficially from the one used there. We will need some basic notions from the theory of formal languages; rather than defining them separately in the most general setting possible, we will introduce the relevant definitions as we go along, and adapt them to our restricted setting. 
	
	Our starting point is a finite digraph $\mathcal D = (\Sigma, A)$ that we will call the {\em decoding digraph}, or simply {\em decoder}. We call $\Sigma$ a (finite) {\em alphabet}, and we refer to its elements (that is, the vertices of $\mathcal D$) as {\em letters} (or {\em symbols}). Now let $\Sigma^*$ be the set of finite sequences of elements of $\Sigma$. We will refer to them as {\em words} (or {\em strings}) {\em over $\Sigma$}. The main idea is now to construct graphs from words in $\Sigma^*$ by ``decoding'' them using $\mathcal D$. The intuition is that each of the indices $1, \dots, n$ of the word $w = w_1w_2\dots w_n$ corresponds to a vertex, and their adjacency depends (in a straightforward way dictated by the arcs in $\mathcal D$) only on the relative order of the indices and on the symbols appearing at those indices. Formally, we have the following definition:
	
	\begin{definition} \label{def-letter-graph}
		Let $\mathcal D = (\Sigma, A)$ be a decoder, and let $w = w_1w_2\dots w_n \in \Sigma^*$. The {\em letter graph} $G(\mathcal D, w)$ is the finite simple graph defined by 
		\begin{itemize}
			\item $V(G(\mathcal D, w)) = [n]$;
			\item $E(G(\mathcal D, w)) = \{\{i, j\} : (w_{\min(i, j)}, w_{\max(i, j)}) \in A\}$.
		\end{itemize}
		
		The map sending $w$ to $G(\mathcal D, w)$ is called the {\em decoding map}.
	\end{definition}

	Some examples are in order.
	
	\begin{example} \label{ex-lg1}
		
		In Figure~\ref{fig:letex}, we show on the left a decoding digraph $\mathcal D$, and on the right the letter graph $G = G(\mathcal D, acdbad)$. Notice how, for each letter $l \in V(\mathcal D)$, the set $\{i : w_i = l\}$ forms either a clique or an independent set, according to whether the loop $(l, l)$ is in $\mathcal D$ or not. Similarly, notice how, for two letters $l_1, l_2$, the sets $A_s = \{i : w_i = l_s\} (s = 1, 2)$ are complete to each other if $\mathcal D$ contains both arcs $(l_1, l_2)$ and $(l_2, l_1)$, and anticomplete to each other if $\mathcal D$ contains none of the two arcs. Finally, the least trivial situation is when $\mathcal D$ contains exactly one of the arcs $(l_1, l_2)$. For instance, we note in the figure that $\mathcal D$ has the arc $(a, c)$, but not the arc $(c, a)$. This tells us that in $G(\mathcal D, w)$, we connect each $a$ to every $c$ appearing after it, but not to any of the $c$s appearing before it.
		
		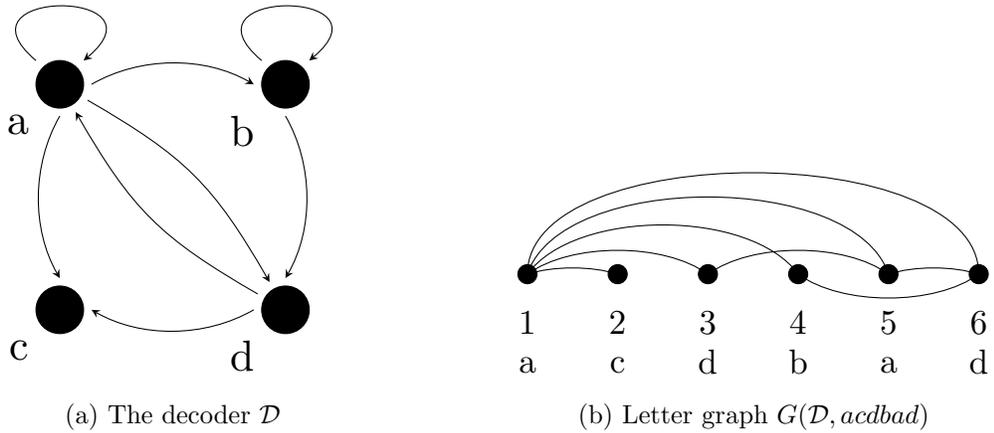
\begin{figure}[ht]
			\centering
			\begin{subfigure}[t]{0.49\linewidth}
				\centering
				\begin{tikzpicture}[scale=1.5, transform shape]
					
					\filldraw
					
					(0,2) circle (6pt) node (a) [below left = 4pt] {a}
					(2,2) circle (6pt) node (b) [below left = 4pt] {b}
					(0,0) circle (6pt) node (c) [below left = 4pt] {c}
					(2,0) circle (6pt) node (d) [below left = 4pt] {d};

					\draw (0cm - 6pt, 2cm + 6pt) edge [out = 140, in = 40, distance = 1cm, -stealth] (0cm + 6pt, 2cm + 6pt);
					\draw (2cm - 6pt, 2cm + 6pt) edge [out = 140, in = 40, distance = 1cm, -stealth] (2cm + 6pt, 2cm + 6pt);
					
					\draw (0cm + 7pt, 2cm - 4pt) edge [out = 330, in = 120, distance = 1cm, -stealth] (2cm - 4pt, 0cm + 7pt);
					\draw (2cm - 7pt, 0cm + 4pt) edge [out = 150, in = 300, distance = 1cm, -stealth] (0cm + 4pt, 2cm - 7pt);
					
					\draw (0cm, 2cm - 8pt) edge [out = 240, in = 120, distance = 0.5cm, -stealth] (0cm, 0cm + 8pt);
					\draw (0cm + 8pt, 2cm) edge [out = 30, in = 150, distance = 0.5cm, -stealth] (2cm - 8pt, 2cm);
					\draw (2cm, 2cm - 8pt) edge [out = 300, in = 60, distance = 0.5cm, -stealth] (2cm, 0cm + 8pt);
					\draw (2cm - 8pt, 0cm) edge [out = 210, in = 330, distance = 0.5cm, -stealth] (0cm + 8pt, 0cm);
					
				\end{tikzpicture}
				\captionsetup{justification=centering}
				\caption{The decoder $\mathcal D$}
				\label{fig:letexa}	
			\end{subfigure}	
			\begin{subfigure}[t]{0.49\linewidth}
				\centering
				\begin{tikzpicture}[scale=1.2, transform shape]
					
					\filldraw
					(1,0) node [below = 22.6pt] {a}
					(2,0) node [below = 22.6pt] {c}
					(3,0) node [below = 20pt] {d}
					(4,0) node [below = 20pt] {b}
					(5,0) node [below = 22.6pt] {a}
					(6,0) node [below = 20pt] {d};
					
					\filldraw	
					(1,0) circle (3pt) node [below = 8pt] {1}
					(2,0) circle (3pt) node [below = 8pt] {2}
					(3,0) circle (3pt) node [below = 8pt] {3}
					(4,0) circle (3pt) node [below = 8pt] {4}
					(5,0) circle (3pt) node [below = 8pt] {5}
					(6,0) circle (3pt) node [below = 8pt] {6};
					
					\draw (1,0) edge [out = 72, in = 108, distance = 1.2cm] (5,0);
					
					\draw (1,0) edge [out = 54, in = 126, distance = 0.9cm] (4,0);
					
					\draw (1,0) edge [out = 18, in = 162, distance = 0.3cm] (2,0);
					
					\draw (1,0) edge [out = 36, in = 144, distance = 0.6cm] (3,0);
					\draw (3,0) edge [out = 36, in = 144, distance = 0.6cm] (5,0);
					\draw (1,0) edge [out = 90, in = 90, distance = 1.5cm] (6,0);
					\draw (5,0) edge [out = 18, in = 162, distance = 0.3cm] (6,0);
					
					\draw (4,0) edge [out = -36, in = -144, distance = 0.6cm] (6,0);	
					
				\end{tikzpicture}
				\captionsetup{justification=centering}
				\caption{Letter graph $G(\mathcal D, acdbad)$}
				\label{fig:letexb}	
			\end{subfigure}
			\caption{A letter graph}
			\label{fig:letex}
		\end{figure}
	\end{example}

	\begin{example} \label{ex-lg2}
		The simplest non-trivial example of a decoder is $\mathcal D = (\{a, b\}, \{(a, b)\})$ (that is, $\mathcal D$ is a digraph with two vertices and a single directed arc between them). Graphs with this decoder are exactly the bipartite chain graphs;\footnote{A bipartite graph is a {\em chain graph} if the vertices in each part are linearly ordered by the inclusion of their neighbourhoods. Equivalently, they are the $2K_2$-free bipartite graphs.} in particular, if $w = abab\dots ab$ is the concatenation of the word $ab$ $n$ times, the graph $G(\mathcal D, w)$ is the prime chain graph on $2n$ vertices (see Figure~\ref{fig-chainex}). The indices in the figure indicate the order in which the vertices appear in $w$.

		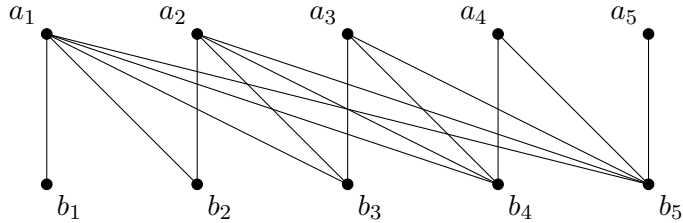
\begin{figure}[ht]
			\begin{center}
				\begin{tikzpicture}[scale=1, transform shape]
					
					\foreach \i in {1,...,5} {
						\filldraw (\i * 2, 0) circle (2pt) node[below right]{$b_{\i}$};
						\filldraw (\i * 2, 2) circle (2pt) node[above left]{$a_{\i}$};
						\foreach \x in {\i,...,5} {
							\draw (\i * 2, 2) -- (\x * 2, 0);
						}
					}

				\end{tikzpicture}
			\end{center}
			\caption{The prime chain graph on 10 vertices}
			\label{fig-chainex}
		\end{figure}
		
	\end{example}
	
	\begin{example} \label{ex-lg3}
		For our final example, we remark that any graph $G$ has a letter graph representation $G \cong G(\mathcal D, w)$, if we put $V(\mathcal D) = V(G)$ and $E(\mathcal D) = \{(u, v), (v, u) : \{u, v\} \in E(G)\}$ ($w$ can be any word containing each letter exactly once). 
	\end{example}
	
	This last example also shows that the question of interest for this notion is not simply ``can we represent a given graph as a letter graph?''. Instead, we want to investigate what happens when we fix a decoder, and consider all graphs representable with that particular decoder. Since, for a given size of the alphabet $\Sigma$, there are only finitely many possible decoders, this is more or less the same as studying what happens when we bound the number of letters. To this end, we have the following definitions:
	
	\begin{definition} \label{lettericity}
		Let $G$ be a graph. The {\em lettericity} $\lett(G)$ of $G$ is the smallest $n \in N$ such that $G$ is isomorphic to a letter graph over a decoder $\mathcal D = (\Sigma, A)$ with $|\Sigma| = n$.
		
		For a decoder $\mathcal D$, we write $\mathcal L_{\mathcal D}$ for the class of graphs representable as letter graphs with that decoder, and call it the {\em class of letter graphs with decoder $\mathcal D$}. For a natural $k$, the {\em class of $k$-letter graphs $\mathcal L_k$} is the (finite) union $\bigcup_{\mathcal D : |V(\mathcal D)| = k} \mathcal L_{\mathcal D}$.
	\end{definition}
	
	\begin{remark} \label{rem-subgraph-implies-subword}
		The classes $\mathcal L_\mathcal D$ (and, as a consequence, $\mathcal L_k$) are hereditary. Indeed, it is easy to check that any induced subgraph $H$ of $G \cong G(\mathcal D, w)$ can be written as $G(\mathcal D, w')$ where $w'$ is obtained from $w$ by deleting the entries not corresponding to vertices of $H$.
	\end{remark}
	
	In \cite{letter-graphs}, Petkov\v sek characterises $k$-letter graphs as follows:
	\begin{theorem}[\cite{letter-graphs}, Proposition~1] \label{thm:k-letter} A graph $G$ is a $k$-letter graph if and only if 
		\begin{itemize}
			\item[$1.$] there is a partition $V_1,V_2,\ldots,V_p$ of $V(G)$ with $p\le k$ such that each $V_i$ is either a clique or an independent set in $G$, and
			\item[$2.$] there is a linear ordering $L$ of $V(G)$ such that for each pair of distinct indices $1\le i,j\le p$, the intersection of $E(G)$ with $V_i\times V_j$ is one of the following four types (where $L$ is considered as a binary relation, i.e., as a set of pairs):
			\begin{itemize}
				\item[{\rm i.}] $L\cap (V_i\times V_j)$;
				\item[{\rm ii.}] $L^{-1}\cap (V_i\times V_j)$;
				\item[{\rm iii.}] $V_i\times V_j$;
				\item[{\rm iv.}] $\varnothing$.
			\end{itemize}  
		\end{itemize}
	\end{theorem}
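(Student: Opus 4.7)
The plan is to prove both directions of the equivalence by a direct translation between the data of a decoder plus a word, and the data of a partition plus a linear ordering. Both directions are essentially bookkeeping, but the bookkeeping must carefully track the four possible configurations of arcs between two letters in the decoder.

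For the forward direction, I would start with a representation $G \cong G(\mathcal{D}, w)$ where $\mathcal{D} = (\Sigma, A)$ and $|\Sigma| = p \leq k$. For each letter $l \in \Sigma$, define $V_l = \{i : w_i = l\}$, giving a partition of $V(G) = [n]$ into at most $k$ parts. By Definition~\ref{def-letter-graph}, two indices $i < j$ with $w_i = w_j = l$ are adjacent iff $(l,l) \in A$, so each $V_l$ is a clique (loop present) or an independent set (loop absent), verifying condition~1. Take $L$ to be the natural order on $[n]$. For distinct letters $l_1, l_2$ and indices $i \in V_{l_1}$, $j \in V_{l_2}$ with $i < j$, the pair $(i,j) \in E(G)$ iff $(l_1, l_2) \in A$; symmetrically for $i > j$. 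Enumerating the four possibilities for the pair of arcs $(l_1, l_2), (l_2, l_1)$ (both present, neither present, only one present) yields exactly the four types (iii), (iv), (i), (ii) described in condition~2.

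For the backward direction, suppose we are given a partition $V_1, \ldots, V_p$ with $p \leq k$ and a linear ordering $L$ satisfying the conditions. I would construct the decoder $\mathcal{D} = (\Sigma, A)$ by taking one letter $l_i$ per part $V_i$, placing the loop $(l_i, l_i)$ in $A$ iff $V_i$ is a clique, and for $i \neq j$, placing $(l_i, l_j)$ and $(l_j, l_i)$ in $A$ according to the type of the bipartite structure between $V_i$ and $V_j$ (both for type iii, neither for type iv, and the appropriate single arc for types i and ii, chosen so that an edge $(u, v)$ with $u \in V_i$, $v \in V_j$ and $u L v$ forces $(l_i, l_j) \in A$). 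Form $w$ by writing the vertices of $G$ in the order prescribed by $L$, replacing each vertex of $V_i$ by $l_i$. A short case check, splitting on whether the two vertices lie in the same part or in different parts, shows that the adjacency in $G(\mathcal{D}, w)$ matches that of $G$ exactly.

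The main (mild) obstacle is ensuring that the correspondence between arc configurations and edge types is set up consistently with the convention in Definition~\ref{def-letter-graph}, where the arc checked is always $(w_{\min(i,j)}, w_{\max(i,j)})$. In particular, the distinction between types (i) and (ii) must be aligned with the choice of which single arc is placed in $A$; getting this right requires fixing a convention (e.g., that $L$ orders $V_i$ before $V_j$ when the arc chosen is $(l_i, l_j)$) and applying it uniformly. Beyond this care, the proof is a straightforward unpacking of Definition~\ref{def-letter-graph} against the four combinatorial types.
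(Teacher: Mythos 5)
The paper does not prove this statement itself --- it is quoted from Petkov\v sek's original article --- but your argument is the standard one: a direct dictionary between (decoder, word) pairs and (partition, linear order) pairs, with the four arc configurations between two letters matching the four edge types. Your proposal is correct, including the care you take over the $(w_{\min(i,j)}, w_{\max(i,j)})$ convention that distinguishes types i and ii, and nothing further is needed.
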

	
	This characterisation immediately shows that not every class has bounded lettericity, since point 1 above implies lettericity is bounded below by co-chromatic number. 
	
	\bigskip
	
	\bigskip
	
	\bigskip
	
	One of the main reasons the notion of letter graphs is interesting is that $\Sigma^*$ comes with a natural partial order called {\em subword} (or {\em subsequence}) {\em embedding}, that interacts nicely with the induced subgraph partial order:
	
	\begin{definition} \label{def-subword}
		Let $w = w_1w_2\dots w_n$ and $w' = w'_1w'_2\dots w'_{n'}$ be two words over an alphabet $\Sigma$. We say $w$ is a {\em subword} (or {\em subsequence}) of $w'$ (denoted $w \leq w'$) if $n \leq n'$, and there is an increasing injection $\iota : [n] \to [n']$ such that $w_i = w'_{\iota(i)}$ for $i = 1, \dots, n$. $\iota$ is called a {\em subword} or {\em subsequence embedding}.
	\end{definition}
	
	\begin{lemma} \label{lem-subword-implies-subgraph}
		Let $G \cong G(\mathcal D, w)$ and $G' \cong G(\mathcal D, w')$ for some decoder $\mathcal D$ and words $w, w'$. If $w \leq w'$, then $G \leq_i G'$.\footnote{The converse of this lemma is the content of Remark~\ref{rem-subgraph-implies-subword}, so that $w \leq w'$ if and only if $G \leq_i G'$.}
	\end{lemma}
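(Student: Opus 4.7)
The plan is to take the subword embedding $\iota : [n] \to [n']$ guaranteed by the hypothesis $w \leq w'$ and show that this very same map serves as an induced subgraph embedding of $G(\mathcal D, w)$ into $G(\mathcal D, w')$. Concretely, I would argue that $G(\mathcal D, w)$ is isomorphic to the subgraph of $G(\mathcal D, w')$ induced by $U := \iota([n]) \subseteq [n'] = V(G(\mathcal D, w'))$, via the bijection $i \mapsto \iota(i)$.

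To verify this is an induced subgraph isomorphism, I would pick any two distinct indices $i, j \in [n]$ with $i < j$. Since $\iota$ is strictly increasing, $\iota(i) < \iota(j)$, so $\min(\iota(i), \iota(j)) = \iota(i)$ and $\max(\iota(i), \iota(j)) = \iota(j)$. By Definition~\ref{def-letter-graph}, the pair $\{\iota(i), \iota(j)\}$ is an edge of $G(\mathcal D, w')$ if and only if $(w'_{\iota(i)}, w'_{\iota(j)}) \in A$. The defining property of a subword embedding gives $w'_{\iota(i)} = w_i$ and $w'_{\iota(j)} = w_j$, so this is equivalent to $(w_i, w_j) \in A$, which is exactly the condition for $\{i, j\}$ to be an edge of $G(\mathcal D, w)$. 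Thus adjacency is preserved in both directions, and the map is an induced subgraph embedding.

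There is really no substantive obstacle here: the statement is a direct consequence of the fact that Definition~\ref{def-letter-graph} reads off edges purely from the symbols at the relevant positions and from the relative order of those positions, both of which are preserved by a subword embedding. The only minor bookkeeping point is to be careful that the embedding is order-preserving, so that $\min$ and $\max$ on the source side correspond to $\min$ and $\max$ on the target side; this is precisely why we require $\iota$ to be increasing rather than an arbitrary injection.
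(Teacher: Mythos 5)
Your proof is correct and complete: the direct verification that the subword embedding $\iota$ preserves both the symbols and the relative order of positions, and hence preserves adjacency in both directions, is exactly the content of the lemma. The paper's own proof packages the same observation slightly differently --- it notes that $w'$ is obtained from $w$ by inserting letters one at a time, and each insertion only adds a vertex without disturbing adjacencies among the existing ones --- but this is the same idea in incremental form, and your explicit check of the $\min$/$\max$ bookkeeping is if anything the more careful rendering of it.
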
	
	\begin{proof}
		$w'$ can be obtained from $w$ by adding letters between the existing letters of $w$ one at a time. The operation of adding a letter does not change the relative order of the original letters, hence when decoding, it corresponds to adding a vertex to the graph and connecting it to some of the original vertices, without changing the adjacency between the original vertices.
	\end{proof}
	
	This simple fact allows us to use order-theoretic results, namely Higman's Lemma \cite{higman-lemma}, on the classes $\mathcal L_\mathcal D$ (and in general, on classes of bounded lettericity).	
	
	\begin{theorem}[Restricted version of Higman's Lemma, \cite{higman-lemma}, Theorems~1.2~and~4.3] \label{thm-higman-restricted}
		The subword relation defined above is a wqo when the alphabet is finite.
	\end{theorem}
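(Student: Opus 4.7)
The plan is to follow the classical Nash-Williams minimal bad sequence approach. The subword order is clearly well-founded (a proper subword is strictly shorter, and word lengths are non-negative integers), so the statement reduces to ruling out infinite antichains, and in fact any infinite \emph{bad sequence} — a sequence $w_1, w_2, \ldots$ of words over $\Sigma$ such that $w_i \not\leq w_j$ whenever $i < j$. Arguing by contradiction, I would assume such a bad sequence exists and then extract a canonical one by induction: pick $w_1$ to be any word of minimum length that begins some bad sequence; having fixed $w_1, \ldots, w_n$, pick $w_{n+1}$ of minimum length subject to $w_1, \ldots, w_n, w_{n+1}$ extending to a bad sequence. Since the empty word embeds into everything, each $w_i$ is non-empty, so I can write $w_i = a_i u_i$ with $a_i \in \Sigma$ and $u_i \in \Sigma^*$.

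Now the finiteness of $\Sigma$ enters: by pigeonhole, some letter $a \in \Sigma$ equals $a_i$ for infinitely many indices $i_1 < i_2 < \cdots$. Consider the mixed sequence
\[
w_1, w_2, \ldots, w_{i_1-1}, u_{i_1}, u_{i_2}, u_{i_3}, \ldots
\]
Its $i_1$-th term $u_{i_1}$ is strictly shorter than $w_{i_1}$, so by the minimality in the construction of the bad sequence this mixed sequence cannot be bad. Hence there must exist a pair of indices witnessing an embedding in it, and this pair produces a contradiction in each of three easy cases: (i) if both indices lie before $i_1$, we directly contradict the badness of the original sequence; (ii) if $u_{i_k} \leq u_{i_\ell}$ for $k < \ell$, then prepending the common first letter $a$ gives $w_{i_k} \leq w_{i_\ell}$, again a contradiction; (iii) if $w_j \leq u_{i_k}$ for some $j < i_1 \leq i_k$, then $w_j \leq u_{i_k} \leq a u_{i_k} = w_{i_k}$, contradiction.

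The main technical obstacle is giving a clean justification for the minimal bad sequence construction itself, which requires (at least) dependent choice; once the construction is in place, the case analysis in the second paragraph is essentially mechanical. I would also point out explicitly at the end that a trivial induction on the alphabet size is \emph{not} sufficient here — the proof is genuinely about the shape of bad sequences rather than about $\Sigma$ — since this is exactly the point that makes Higman's lemma non-trivial and that motivates its use for the classes $\mathcal L_{\mathcal D}$ through Lemma~\ref{lem-subword-implies-subgraph}.
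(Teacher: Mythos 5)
Your argument is correct and complete: the well-foundedness reduction, the minimal bad sequence construction, the pigeonhole step using finiteness of $\Sigma$, and the three-case analysis of a good pair in the mixed sequence $w_1, \dots, w_{i_1-1}, u_{i_1}, u_{i_2}, \dots$ are all sound (in particular, case (ii) correctly uses that $u_{i_k} \leq u_{i_\ell}$ implies $a u_{i_k} \leq a u_{i_\ell}$ when the prepended letter is the same). Note, however, that the paper does not prove this statement at all --- it is imported as a black box from Higman's 1952 paper --- so there is no internal proof to compare against. What you have written is the Nash--Williams minimal bad sequence argument, which is the standard modern proof and is genuinely different from Higman's original route (which works with closure properties of ordered abstract algebras rather than with bad sequences); your version is shorter and self-contained for the finite-alphabet case, at the price of the non-constructive appeal to dependent choice that you already flag. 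Your closing remark that a ``trivial'' induction on $|\Sigma|$ does not suffice is fair as stated, though you should be aware that non-trivial inductions on the alphabet size do exist for the finite-alphabet case; the point is cosmetic and does not affect the validity of your proof.
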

	
	\begin{theorem}[\cite{letter-graphs}, Theorem~8] \label{thm-bdd-let-wqo}
		The classes $\mathcal L_k$ are wqo by the induced subgraph relation.
	\end{theorem}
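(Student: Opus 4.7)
The plan is to bootstrap the statement from Higman's Lemma (Theorem~\ref{thm-higman-restricted}) via the compatibility in Lemma~\ref{lem-subword-implies-subgraph}, then close the argument by a standard pigeonhole on the finitely many decoders appearing in the union that defines $\mathcal L_k$. So the proof splits into three clean steps: (i) reduce to a single decoder; (ii) handle a single decoder using Higman; (iii) combine via pigeonhole.

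For step (i), since arcs live in the finite set $\Sigma\times\Sigma$ with $|\Sigma|=k$, there are only finitely many decoders $\mathcal D=(\Sigma,A)$ with $|V(\mathcal D)|=k$, and relabelling letters does not change the resulting letter-graph class. So $\mathcal L_k$ is already presented as a finite union $\mathcal L_{\mathcal D_1}\cup\cdots\cup\mathcal L_{\mathcal D_m}$, as in Definition~\ref{lettericity}. For step (ii), I fix $\mathcal D$ and take an arbitrary infinite sequence $G_1,G_2,\dots\in\mathcal L_{\mathcal D}$; for each $i$, choose a word $w_i\in\Sigma^*$ with $G_i\cong G(\mathcal D,w_i)$. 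Because $\Sigma$ is finite, Theorem~\ref{thm-higman-restricted} applied to $w_1,w_2,\dots$ produces indices $i<j$ with $w_i\le w_j$; Lemma~\ref{lem-subword-implies-subgraph} then upgrades this to $G_i\le_i G_j$, i.e., a ``good pair'' in the sequence. Well-foundedness of $\le_i$ on finite graphs is automatic, since the number of vertices strictly decreases along any strict descending chain, so the good-pair property is equivalent to wqo. Finally, step (iii) observes that for any infinite sequence in $\mathcal L_k$, the pigeonhole principle on the $m$ indices forces an infinite subsequence to lie in a single $\mathcal L_{\mathcal D_\ell}$, to which step (ii) applies.

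The argument is essentially a black-box application of Higman's Lemma, once Lemma~\ref{lem-subword-implies-subgraph} has translated subword embedding into induced subgraph containment, so there is no genuine obstacle to overcome. The only mildly delicate point is to remember that ``wqo'' requires both well-foundedness and the absence of infinite antichains, and to verify (trivially) that the first is free here; the second is exactly what the good-pair argument delivers.
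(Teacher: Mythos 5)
Your proof is correct and is exactly the argument the paper intends: the statement is cited from Petkov\v sek, but the surrounding machinery (Theorem~\ref{thm-higman-restricted} and Lemma~\ref{lem-subword-implies-subgraph}) is set up precisely so that wqo of each $\mathcal L_{\mathcal D}$ follows from Higman's Lemma, with the finite union over decoders handled by pigeonhole. Your side remarks on well-foundedness and the good-pair formulation are accurate and complete the reduction to the paper's definition of wqo.
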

	
	\begin{corollary} \label{cor-bdd-let-wqo}
		Any class of bounded lettericity is wqo.
	\end{corollary}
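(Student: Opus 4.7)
The plan is to deduce this as an essentially immediate consequence of Theorem~\ref{thm-bdd-let-wqo}, using the observation that being wqo is inherited by arbitrary subsets. I would begin by unpacking the hypothesis: a class $\mathcal X$ has \emph{bounded lettericity} means there exists some $k \in \mathbb N$ such that $\lett(G) \leq k$ for every $G \in \mathcal X$. By Definition~\ref{lettericity}, each such $G$ is then representable as a letter graph over some decoder on at most $k$ symbols, so $G \in \mathcal L_k$ (taking $\mathcal L_j \subseteq \mathcal L_k$ for $j \leq k$, which follows from the fact that one may always pad the alphabet with an unused letter). Hence $\mathcal X \subseteq \mathcal L_k$.

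Next, I would invoke Theorem~\ref{thm-bdd-let-wqo} to conclude that $\mathcal L_k$ is wqo under the induced subgraph relation, and observe that wqo passes to subclasses: any infinite antichain in $\mathcal X$ would be an infinite antichain in the larger class $\mathcal L_k$, and similarly any infinite strictly descending chain in $\mathcal X$ would also be one in $\mathcal L_k$. Both being forbidden, $\mathcal X$ itself is wqo.

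There is no real obstacle here; the content is entirely in Theorem~\ref{thm-bdd-let-wqo}, and the corollary is a one-line packaging. The only point that deserves a brief comment is the inclusion $\mathcal X \subseteq \mathcal L_k$, which depends on the (easy) monotonicity $\mathcal L_j \subseteq \mathcal L_k$ for $j \leq k$; this is why taking the upper bound $k$ on lettericity, rather than an exact value, still lands the class inside a single $\mathcal L_k$.
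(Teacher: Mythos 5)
Your argument is exactly the intended one: a class of bounded lettericity sits inside some $\mathcal L_k$ (using the easy monotonicity via padding the alphabet), and wqo is inherited by subclasses, so Theorem~\ref{thm-bdd-let-wqo} does all the work. The paper leaves this corollary without an explicit proof precisely because this is the one-line deduction it has in mind.
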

	
	Theorem~\ref{thm-bdd-let-wqo} makes graph lettericity an important parameter when studying wqo of classes of graphs under the induced subgraph relation, since it provides non-trivial examples of wqo classes of graphs, and it also gives a useful method for proving certain classes are wqo. The theorem also provides an alternative argument that not all classes of graphs have bounded lettericity, since any class containing all cycles (or, indeed, any other infinite antichain) must have unbounded lettericity. Let us construct one explicit example of graphs of high lettericity:
	
	\begin{example} \label{ex-let-matchings}
		Let $n \in \mathbb N$. We have $\lett(nK_2) = n$. Indeed, it is easy to see that $n$ letters are enough to represent the graph (just use one letter per edge). If we had $\lett(nK_2) < n$, then there would be 3 vertices with the same letter, say $a$. Denote their appearances in the word $w$ representing $nK_2$ by $a_1, a_2$ and $a_3$, so that $a_2$ lies between the other two in $w$. We note that no vertex can be adjacent to only the vertex corresponding to $a_2$, which is a contradiction, since every vertex in $nK_2$ has degree 1.	    
	\end{example}
	
	\bigskip
	
	Before moving on, we mention one more result shown in \cite{letter-graphs}.
	
	\begin{theorem}[\cite{letter-graphs}, Theorem~9]
		For each $k$, the class $\mathcal L_k$ is characterised by finitely many minimal forbidden induced subgraphs. 
	\end{theorem}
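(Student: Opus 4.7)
The plan is to argue by contradiction: suppose that for some fixed $k$ there exist infinitely many minimal forbidden induced subgraphs $G_1, G_2, \dots$ for $\mathcal L_k$. Since the set of minimal forbidden induced subgraphs is an antichain under $\leq_i$, these graphs are pairwise non-isomorphic. The target is to produce indices $i < j$ with $G_i \leq_i G_j$, which contradicts minimality of $G_j$ because $G_i \notin \mathcal L_k$ and $G_i \not\cong G_j$. The mechanism that will deliver $G_i \leq_i G_j$ is Higman's Lemma (Theorem~\ref{thm-higman-restricted}), applied to a suitable word encoding of the pairs $(G_i, v_i)$ where $v_i$ is a distinguished vertex.

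The key reduction is to encode each $G_i$ as a word over a single fixed finite alphabet, in such a way that the subword relation implies the induced-subgraph relation on the full $G_i$ (not just on $G_i - v_i$, which is what Lemma~\ref{lem-subword-implies-subgraph} alone gives us). First, for each $i$ I pick an arbitrary vertex $v_i \in V(G_i)$; by minimality $G_i - v_i \in \mathcal L_k$, so $G_i - v_i \in \mathcal L_{\mathcal D_i}$ for some decoder $\mathcal D_i$ with $k$ letters. Since there are only finitely many such decoders, pigeonhole lets me pass to an infinite subsequence along which the decoder $\mathcal D$ (with alphabet $\Sigma$, $|\Sigma| = k$) is fixed. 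For each $i$ in this subsequence I fix a word $w_i \in \Sigma^*$ with $G(\mathcal D, w_i) \cong G_i - v_i$, and I augment $w_i$ by tagging each position $s$ with a bit that records whether the corresponding vertex of $G_i - v_i$ is adjacent to $v_i$ in $G_i$. This yields a word $w_i^{+}$ over the finite alphabet $\Sigma \times \{0, 1\}$, and the pair $(G_i, v_i)$ is recoverable, up to isomorphism, from $(\mathcal D, w_i^{+})$.

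The conclusion follows quickly from Higman's Lemma applied to the sequence $(w_i^{+})$: we obtain $i < j$ with $w_i^{+} \leq w_j^{+}$ as subwords of $(\Sigma \times \{0,1\})^*$. Projecting onto the $\Sigma$-coordinate gives $w_i \leq w_j$ as subwords of $\Sigma^*$, and Lemma~\ref{lem-subword-implies-subgraph} produces an induced-subgraph embedding $\iota\colon G_i - v_i \hookrightarrow G_j - v_j$. The fact that the $\{0,1\}$-coordinate is also preserved by the subword embedding says exactly that a vertex of $G_i - v_i$ is adjacent to $v_i$ if and only if its image under $\iota$ is adjacent to $v_j$; so extending $\iota$ by $v_i \mapsto v_j$ gives an induced-subgraph embedding $G_i \hookrightarrow G_j$. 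Since $G_j$ is minimal forbidden and $G_i \notin \mathcal L_k$, we conclude $G_i \cong G_j$, contradicting pairwise non-isomorphism.

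The main obstacle is really just the careful bookkeeping in step two: one has to set up the augmented alphabet so that a single application of Higman's Lemma simultaneously embeds $G_i - v_i$ into $G_j - v_j$ and sends the neighbourhood of $v_i$ into the neighbourhood of $v_j$. Once this is done, the proof is essentially the proof of Theorem~\ref{thm-bdd-let-wqo} with one extra bit attached to each letter, and the rest (reducing to a single fixed decoder, drawing the contradiction from minimality) is formal.
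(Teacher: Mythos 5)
Your proof is correct and follows essentially the same strategy as the paper's (sketched) argument: minimal obstructions are one-vertex extensions of $k$-letter graphs, and such extensions live in a wqo universe by Higman's Lemma, so they cannot form an infinite antichain. The only difference is that you inline the key step --- where the paper re-encodes all of $G$ as a $(2k+1)$-letter graph and cites wqo of $\mathcal L_{2k+1}$, you keep the deleted vertex outside the word and track its adjacencies with an extra bit over the alphabet $\Sigma \times \{0,1\}$, which is a perfectly sound unwinding of the same idea.
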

	\begin{proof}[Sketch of proof]
		Let $S_k$ be the set of minimal forbidden induced subgraphs for $\mathcal L_k$. If $G \in S_k$, then for any vertex $v$ of $G$, $G - v$ is in $\mathcal L_k$. It is not too difficult to see that this implies $G \in \mathcal L_{2k + 1}$. The claim follows, since $\mathcal L_{2k + 1}$ is wqo and $S_k$ is an antichain. 
	\end{proof}

	\subsection{Monotone and geometric griddability}
	\label{let-subsec-grid}
	
	The study of permutations as combinatorial objects is a rich and rapidly developing area of research. Topics of interest include enumerative problems, and well-quasi-orderability. A detailed history of the field, albeit interesting, is outside our scope; we will instead only introduce the notions immediately relevant to us.
	
	For our purposes, a permutation is a linear order on $[n]$ for some $n \in \mathbb N$ -- in other words, a string in which every number in $[n]$ appears exactly once, such as ``41325'' or ``7654321''. We will refer to the characters in the string as {\em digits} or {\em elements}. Permutations come with a natural partial order on them called {\em pattern containment}:
	
	\begin{definition} \label{def-pattern-containment}
		Let $w = w_1w_2\dots w_t$ and $w' = w'_1w'_2\dots w'_{t'}$ be two words in $\mathbb N^*$. We say $w$ is {\em order-isomorphic} to $w'$ if $t = t'$, and for all $1 \leq i, j \leq t$, $w_i \leq w_j$ if and only if $w'_i \leq w'_j$. 
		
		Now let $\sigma$ and $\pi$ be two permutations. We say $\sigma$ is a {\em pattern} of $\pi$ (or $\pi$ {\em contains} $\sigma$ {\em as a pattern}) if $\pi$ contains a subsequence that is order-isomorphic to $\sigma$. If $\pi$ contains no such subsequence, we say $\pi$ {\em avoids} $\sigma$.
	\end{definition}
	
	\begin{example} \label{ex-pattern-containment-1}
		The permutation 2713564 contains 1423 as a pattern. Indeed, the subsequence 2735 is order-isomorphic to 1423.
		
		As another example, the permutations that avoid 21 as a pattern are exactly the increasing permutations 1, 12, 123, 1234, \dots.
	\end{example}
	
	Pattern containment is analogous to the induced subgraph relation, and we can define {\em permutation classes} as sets of permutations closed under (isomorphisms and) pattern containment. By the same general theory as in the case of graphs, any permutation class $\mathcal X$ can be characterised uniquely in terms of its set of minimal avoided patterns $\av(\mathcal X)$, also known as the {\em basis} of $\mathcal X$. 
	
	\begin{remark}
		Now is a good time to point out that in the study of permutations on the one hand, and graphs on the other, completely analogous concepts might have different terminology associated to them. This difference might be subtle -- for instance, graph theorists usually use the word ``hereditary'' to specify when a graph class is closed under taking induced subgraphs, while in the permutation literature, classes are often closed under pattern containment from the definition. We will do our best to avoid any ambiguities caused by this, but the reader should be warned that, when we deem the risk of confusion to be low, we will liberally borrow from one field to refer to concepts from the other, like saying a graph ``avoids'' another (as an induced subgraph). Similarly, we might use more general terminology from standard combinatorial theory, like saying ``minimal obstructions'' (or ``minimal obstacles'') to refer to either minimal forbidden induced subgraphs for a graph class, or to minimal avoided patterns for a permutation class.
	\end{remark}
	
	We may identify a permutation $\pi$ on $[n]$ with its {\em plot}, the set of points $\{(i, \pi(i)) : 1 \leq i \leq n\}$ in the plane. More generally, \cite{geometric} describes a rigorous framework for this geometric perspective on permutations. We do not need the full generality of their framework\footnote{Indeed, \cite{geometric} and to some degree \cite{monotone} present everything with an added level of formalism. This has the benefit of making the geometric theory of permutations and the tools we are about to describe fairly robust, but it does so at the price of brevity. Since our focus is not on permutations themselves, but rather on their relationship to graphs (and, as we will see, to another combinatorial structure capturing some of their order properties), we will take some shortcuts along the way. Our aim here is to give the minimum amount of rigour necessary for developing an intuition in working with those tools; for the reader's peace of mind, we stress that everything we discuss in this subsection could be done carefully and in more detail.}, but the gist of it is as follows: call a set of points in the plane {\em independent} if no two points lie on the same vertical or horizontal line. We may define a permutation as an equivalence class of finite independent sets of points, where two such sets are equivalent if, roughly speaking, we can get from one of them to the other by vertical and horizontal stretching or shrinking. 
	
	As an example, Figure~\ref{fig-examplepermutation} illustrates the plot of 614253 (axes are omitted), which is a representative for its equivalence class. The only thing that matters is the relationship between the vertical and horizontal orderings of the six points. More concretely, if we label the points in increasing order from the bottom to the top, then reading the labels from left to right yields 614253. The full equivalence class consist of exactly the (independent) sets of points with this property. 
	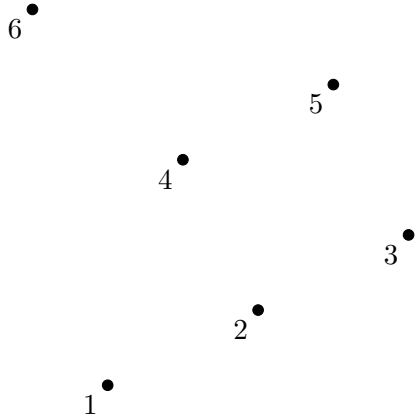
\begin{figure}[ht]
		\begin{center}
			\begin{tikzpicture}[scale=1, transform shape]
				\filldraw 
				(1,6) circle (2pt) node[below left] {6}
				(2,1) circle (2pt) node[below left] {1}
				(3,4) circle (2pt) node[below left] {4}
				(4,2) circle (2pt) node[below left] {2}
				(5,5) circle (2pt) node[below left] {5}
				(6,3) circle (2pt) node[below left] {3};
				
			\end{tikzpicture}
		\end{center}
		\caption{Geometric representation of $\pi = 614253$.}
		\label{fig-examplepermutation}	
	\end{figure}
	
	\bigskip
	
	We will now talk about two tools used to study permutation classes: monotone and geometric griddability. The notion of monotone griddability was developed over several papers, by successively generalising previous notions. Some of the steps that led to the definition that we have today can be found in \cite{atkinson-restricted-perm, atkinson-pwo-closed, murphy-profile}. The definitions we give here are more or less equivalent to the ones introduced in \cite{monotone}.
	
	Let $s, t \in \mathbb N$. An $s \times t$ {\em gridding} $\Gamma$ is a set of $s + 1$ vertical and $t + 1$ horizontal lines in the plane. This partitions the rectangle in the plane defined by the extremal lines into $st$ regions that we will call the {\em cells} of the gridding. The cells are labelled $Z_{ij}$, where the first index counts from left to right, and the second from bottom to top.\footnote{In particular, whenever we use matrices, we will follow the same (non-standard) indexing convention: an $s \times t$ matrix $M = (a_{ij})$ has $s$ columns and $t$ rows; the indices $i, j$ count the entries of $M$ from left to right, and from bottom to top respectively.}
	
	
	\begin{definition}
		Let $\pi$ be a permutation and $M = (\alpha_{ij})$ an $s \times t$ matrix with entries in $\{0, \pm 1\}$.  We say $\pi$ is {\em monotonically griddable by $M$} (or just ``griddable'' for short) if there exists an $s \times t$ gridding $\Gamma$ such that:
		
		\begin{itemize}
			\item If $\alpha_{ij} = 0$, then $\pi \cap Z_{ij} = \varnothing$.
			\item If $\alpha_{ij} = 1$, then $\pi \cap Z_{ij}$ is increasing.
			\item If $\alpha_{ij} = -1$, then $\pi \cap Z_{ij}$ is decreasing.
		\end{itemize}
		
		We say such a $\Gamma$ is a {\em monotone gridding} of $\pi$ by $M$ -- see Figure~\ref{fig-ex-mon-grid} for an example.
		
	\end{definition}
	
	\begin{figure}[ht]
		\begin{center}
			\begin{tikzpicture}[scale=0.8, transform shape]
				\filldraw 
				(1,6) circle (2pt) node[below left] {6}
				(2,1) circle (2pt) node[below left] {1}
				(3,4) circle (2pt) node[below left] {4}
				(4,2) circle (2pt) node[below left] {2}
				(5,5) circle (2pt) node[below left] {5}
				(6,3) circle (2pt) node[below left] {3};
				
				\draw[dashed] (3.5, 7) -- (3.5, 0);
				\draw[dashed] (0, 7) -- (0, 0);
				\draw[dashed] (7, 7) -- (7, 0);
				\draw[dashed] (0, 3.5) -- (7, 3.5);
				\draw[dashed] (0, 0) -- (7, 0);
				\draw[dashed] (0, 7) -- (7, 7);			
			\end{tikzpicture}
		\end{center}
		\caption{A monotone gridding of $614253$ by $\big(\begin{smallmatrix} -1 & -1 \\ 1 & 1 \end{smallmatrix}\big)$.}
		\label{fig-ex-mon-grid}
	\end{figure}
	
	We note that monotone griddability is well-defined: if the plot of a permutation $\pi$ is griddable by some matrix, then so is every other representative for $\pi$ (we may simply stretch the gridding with the plot to get to any independent set of points in the equivalence class of $\pi$). We also note that if $\pi$ is griddable by a matrix $M$, then so is any subpattern $\sigma$ of $\pi$. This motivates the following definitions:
	
	\begin{definition} \label{def-grid-class}
		Let $M$ be a  0/$\pm 1$ matrix. The {\em grid class} of $M$, denoted $\grid(M)$, is the class of permutations monotonically griddable by $M$. A class $\mathcal X$ of permutations is called {\em monotonically griddable} if $\mathcal X \subseteq \grid(M)$ for some fixed 0/$\pm 1$ matrix $M$.        
	\end{definition}
	
	Huczynska and Vatter \cite{monotone} give a characterisation of monotone griddable classes in terms of minimal non-griddable classes. To state it, we first need a definition:
	
	\begin{definition}
		Let $\pi \in S_m$ and $\sigma \in S_n$. We define their {\em direct sum} $\pi \oplus \sigma$ by $$(\pi \oplus \sigma)(i) =
		\begin{cases}
			\pi(i) & \text{if } i \in [m], \\
			\sigma(i - m) + m & \text{if } i \in [m + n] \setminus [m],
		\end{cases} $$
		and similarly their {\em skew sum} $\pi \ominus \sigma$ by $$(\pi \ominus \sigma)(i) =
		\begin{cases}
			\pi(i) + n & \text{if } i \in [m], \\
			\sigma(i - m) & \text{if } i \in [m + n] \setminus [m].
		\end{cases} $$
	\end{definition}
	
	Figure~\ref{fig-direct-skew-sum} illustrates the geometric meaning of the direct and skew sums.
	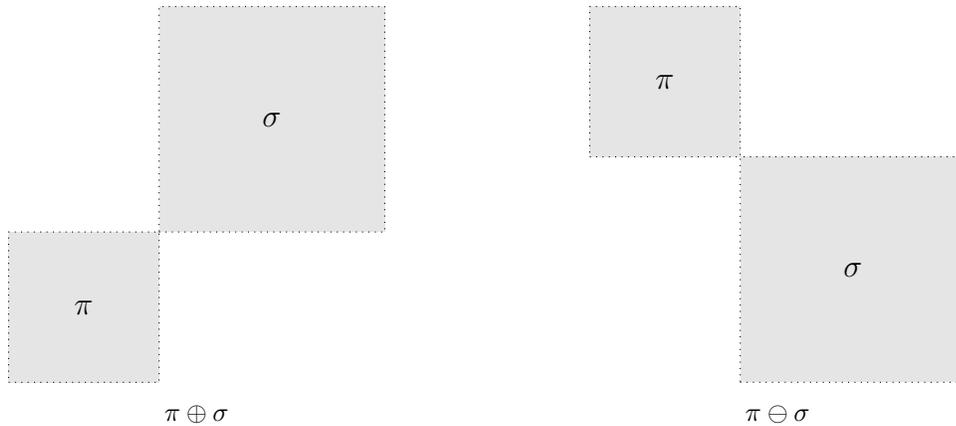
\begin{figure}[ht]
		\centering
		\begin{subfigure}[t]{0.49\linewidth}
			\centering
			\begin{tikzpicture}[scale=1, transform shape]
				
				\fill[gray, fill opacity = 0.2] (0,0) -- (0,2) -- (2,2) -- (2,0) -- cycle;
				\fill[gray, fill opacity = 0.2] (2,2) -- (2,5) -- (5,5) -- (5,2) -- cycle;
				
				\draw[dotted] (0,0) -- (0,2) -- (2,2) -- (2,0) -- cycle;
				\draw[dotted] (2,2) -- (2,5) -- (5,5) -- (5,2) -- cycle;
				
				\draw (1, 1) node{\large $\pi$};
				\draw (3.5, 3.5) node{\large $\sigma$};

			\end{tikzpicture}
			\captionsetup{justification=centering}
			\caption*{$\pi \oplus \sigma$}
		\end{subfigure}	
		\begin{subfigure}[t]{0.49\linewidth}
			\centering
			\begin{tikzpicture}[scale=1, transform shape]
				
				\fill[gray, fill opacity = 0.2] (0,3) -- (0,5) -- (2,5) -- (2,3) -- cycle;
				\fill[gray, fill opacity = 0.2] (2,3) -- (5,3) -- (5,0) -- (2,0) -- cycle;
				
				\draw[dotted] (0,3) -- (0,5) -- (2,5) -- (2,3) -- cycle;
				\draw[dotted] (2,3) -- (5,3) -- (5,0) -- (2,0) -- cycle;
				
				\draw (1, 4) node{\large $\pi$};
				\draw (3.5, 1.5) node{\large $\sigma$};
				
			\end{tikzpicture}
			\captionsetup{justification=centering}
			\caption*{$\pi \ominus \sigma$}	
		\end{subfigure}
		\caption{Direct and skew sum of two permutations}
		\label{fig-direct-skew-sum}
	\end{figure}
	
	\begin{theorem}[\cite{monotone}, Theorem~2.5] \label{thm-huczynska-vatter-griddable-characterisation}
		A permutation class is griddable if and only if it does not contain arbitrarily long direct sums of 21 or skew sums of 12.
	\end{theorem}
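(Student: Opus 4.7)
Suppose $\mathcal X \subseteq \grid(M)$ for some $s \times t$ matrix $M$; I will show $\oplus_N 21 \notin \mathcal X$ for $N > st + s + t - 2$. Given a monotone gridding of $\oplus_N 21$ by $M$, observe that the $N$ disjoint 21-pairs occupy pairwise disjoint unit bounding boxes along the main diagonal of the plot. Each of the $(s-1)+(t-1)$ interior grid lines pierces at most one such box, so at most $s + t - 2$ pairs straddle a cell boundary. The remaining pairs each lie entirely within a single cell, which must then be a $-1$ cell; moreover, no two can coexist in the same $-1$ cell, since their four combined points, read left-to-right, fail to be decreasing. Hence $N \leq st + s + t - 2$, and the $\ominus_N 12$ case is symmetric.

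\textbf{Sufficiency.} Assume $\mathcal X$ avoids $\oplus_K 21$ and $\ominus_K 12$ for some fixed $K$. The plan is to construct, via the substitution decomposition of permutations, a single matrix $M$ gridding every $\pi \in \mathcal X$. The key structural observation is that any $\oplus$-indecomposable permutation of length $\geq 2$ must satisfy $\pi(1) \geq 2$ and hence contains a $21$ pattern (pair $\pi(1)$ with the position of the value $1$). Consequently the $\oplus$-decomposition of any $\pi \in \mathcal X$ has at most $K - 1$ non-singleton (``big'') parts, for otherwise one could extract a $21$ from each big part to produce $\oplus_K 21$. The remaining singletons cluster into at most $K$ increasing runs alternating with the big parts, yielding a preliminary diagonal gridding of size $O(K) \times O(K)$. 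Each big part is $\oplus$-indecomposable and lies in $\mathcal X$, so symmetrically its $\ominus$-decomposition has at most $K-1$ big sub-parts, allowing an anti-diagonal refinement of its cell. We recurse by alternating $\oplus$- and $\ominus$-decompositions on sub-pieces.

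The main obstacle is to ensure that a \emph{single fixed-size matrix} suffices for the entire class, i.e., to uniformly bound the recursion. It halts naturally only at pieces that are simultaneously $\oplus$- and $\ominus$-indecomposable, and such pieces may have length larger than $1$ (for instance $3142$). To address this, the plan is to establish a Ramsey-type lemma: any sufficiently long permutation in $\mathcal X$ that is both $\oplus$- and $\ominus$-indecomposable must contain either $\oplus_K 21$ or $\ominus_K 12$, contradicting the hypothesis. Hence the atomic pieces at which the recursion terminates have length bounded by some $N(K)$, and combined with the $O(K)$-branching at each level this yields a matrix whose size depends only on $K$. The technical heart of the argument is this Ramsey-type lemma, where one must carefully exploit the joint indecomposability to force a substructure rich enough to embed the forbidden patterns.
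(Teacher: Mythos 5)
The paper does not prove this statement; it is quoted verbatim from Huczynska and Vatter \cite{monotone}, so there is no internal proof to compare against. Judged on its own terms, your necessity argument is correct and clean: the disjoint bounding boxes of the $21$-pairs, the counting of pierced boxes against interior grid lines, and the observation that a single $-1$ cell cannot host two full pairs together give $N \le st + s + t - 2$.

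The sufficiency argument, however, has a genuine gap: the depth of your alternating $\oplus$/$\ominus$ recursion is not bounded by any function of $K$, and your proposed Ramsey lemma attacks the wrong bottleneck. Bounding the size of the simultaneously indecomposable \emph{leaves} and the branching at each level does not bound the total size of the recursion tree unless the depth is also controlled, and it is not. Concretely, set $\tau_1 = 1$ and $\tau_{n+1} = 1 \oplus (1 \ominus \tau_n)$; one checks (e.g.\ $\tau_4 = 1726354$) that every $\tau_n$ avoids both $2143 = 21 \oplus 21$ and $3412 = 12 \ominus 12$, so the closure of $\{\tau_n\}$ satisfies your hypothesis with $K = 2$. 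Yet the $\oplus$-decomposition of $\tau_n$ has one big part whose $\ominus$-decomposition has one big part, namely $\tau_{n-1}$, so your recursion runs to depth $n$ and produces a gridding matrix whose size grows with $n$. (The class \emph{is} griddable -- the odd-indexed entries of $\tau_n$ are increasing and the even-indexed ones decreasing, so a single horizontal line suffices -- but no gridding of bounded size can be read off the substitution decomposition tree level by level.) This is why the actual proof in \cite{monotone} does not recurse on the sum decomposition at all: it works directly with the plot, using an extremal/Ramsey-type argument to show that avoiding $\oplus_K 21$ and $\ominus_K 12$ forces the plot of every permutation in the class to be coverable by a bounded number of monotone rectangles arranged compatibly with a grid. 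To salvage your approach you would need an additional lemma bounding, as a function of $K$, the number of alternations between $\oplus$- and $\ominus$-decomposability along any chain of the decomposition tree -- and the example above shows that no such lemma holds.
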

	
	In other words, a class $\mathcal X$ of permutations is monotone griddable if and only if it does not contain the class of all (subpatterns of) direct sums of 21 or the class of all (subpatterns of) skew sums of 12.
	
	\bigskip
	
	We now discuss the second, stronger notion of griddability that we mentioned, introduced in \cite{geometric} and called {\em geometric griddability}. The definition is very similar to that of monotone griddability, where we start with a 0/$\pm 1$ matrix $M$ and a gridding whose cells correspond to entries of $M$. However, instead of simply requiring that $\pi$ is monotone in the cells of the gridding, we put the stronger condition that the entries of $\pi$ in each cell lie on one of the diagonals.
	
	\begin{definition}
		Let $\pi$ be a permutation and $M = (\alpha_{ij})$ an $s \times t$ matrix with entries in $\{0, \pm 1\}$. We say $\pi$ is {\em geometrically griddable by $M$} if there exists an $s \times t$ gridding $\Gamma$ such that:
		
		\begin{itemize}
			\item If $\alpha_{ij} = 0$, then $\pi \cap Z_{ij} = \varnothing$.
			\item If $\alpha_{ij} = 1$, then $\pi \cap Z_{ij}$ lies on the main diagonal\footnote{That is, the straight line segment connecting the bottom left corner with the top right one.} of $Z_{ij}$.
			\item If $\alpha_{ij} = -1$, then $\pi \cap Z_{ij}$ lies on the antidiagonal of $Z_{ij}$.
		\end{itemize}
		
		We say such a $\Gamma$ is a {\em geometric gridding} of $\pi$ by $M$ -- see Figure~\ref{fig-ex-geom-grid} for an example.
		
		The union of the diagonals and antidiagonals on which the entries of $\pi$ may lie, subject to suitable normalisations,\footnote{Those normalisations are there just for convenience, and they consist of making each cell correspond to a unit square, with the bottom left corner of the bottom left cell at $(0, 0)$. Since we allow vertical and horizontal stretching, the normalisations do not affect the permutations geometrically griddable by $M$.} is called the {\em standard figure} of $M$.
	\end{definition}
	
	\begin{figure}[ht]
		\begin{center}
			\begin{tikzpicture}[scale=0.8, transform shape]
				\filldraw (0.5,6.5) circle (2pt) node[below left] {6};
				
				\filldraw (1,1) circle (2pt) node[below right] {1};
				
				\filldraw
				(3,4) circle (2pt) node[below left] {4};
				
				\filldraw (5,1.5) circle (2pt) node[below right] {2};
				
				\filldraw (5.5,5) circle (2pt) node[below left] {5};
				
				\filldraw (6.5,3) circle (2pt) node[below right] {3};

				\draw[dashed] (3.5, 7) -- (3.5, 0);
				\draw[dashed] (0, 7) -- (0, 0);
				\draw[dashed] (7, 7) -- (7, 0);
				\draw[dashed] (0, 3.5) -- (7, 3.5);
				\draw[dashed] (0, 0) -- (7, 0);
				\draw[dashed] (0, 7) -- (7, 7);
				
				\draw (0, 0) -- (3.5, 3.5);
				\draw (3.5, 7) -- (7, 3.5);
				\draw (3.5, 0) -- (7, 3.5);
				\draw (0, 7) -- (3.5, 3.5);

			\end{tikzpicture}
		\end{center}
		\caption{A geometric gridding of $614253$ by $\begin{pmatrix}
				-1 & -1 \\
				1 & 1
			\end{pmatrix}$.}
		\label{fig-ex-geom-grid}
	\end{figure}
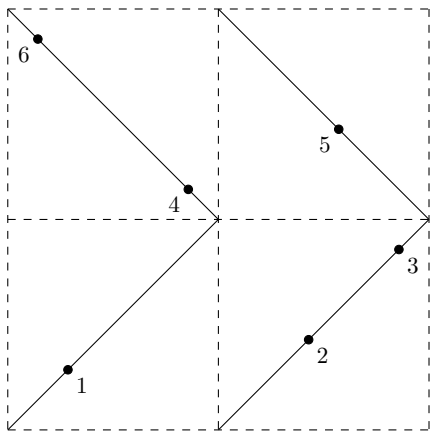
	
	Similarly to (monotone) griddability, we define geometric griddability of classes as follows:
	\begin{definition} \label{def-geom-class}
		Let $M$ be a  0/$\pm 1$ matrix. The {\em geometric grid class} of $M$, denoted $\geom(M)$, is the class of permutations geometrically griddable by $M$. A class $\mathcal X$ of permutations is called {\em geometrically griddable} if $\mathcal X \subseteq \geom(M)$ for some fixed 0/$\pm 1$ matrix $M$.        
	\end{definition}
	
	It is clear from the definition that any permutation geometrically griddable by a matrix is monotonically griddable by that matrix. Concisely, for any 0/$\pm 1$ matrix $M$, $\geom(M) \subseteq \grid(M)$. Is the converse true? As one might expect, the answer is in general negative:
	
	\begin{example}
		Let $\pi = 2413$ and $M =\begin{pmatrix}
			-1 & 1 \\
			1 & -1
		\end{pmatrix}$. Then $\pi \in \grid(M)$, but $\pi \notin \geom(M)$.
		That $\pi \in \grid(M)$ is easy to see -- we can grid it with one element per cell. To see that $\pi \notin \geom(M)$, one can derive a contradiction by first noting (via simple case analysis) that there must be one element per cell, then seeing that, going around in a clockwise cycle starting at say 2, the distance from each element to the centre of the figure should strictly decrease, leaving no place to put 1 (see Figure~\ref{fig-ex-2413}).
	\end{example}
	
	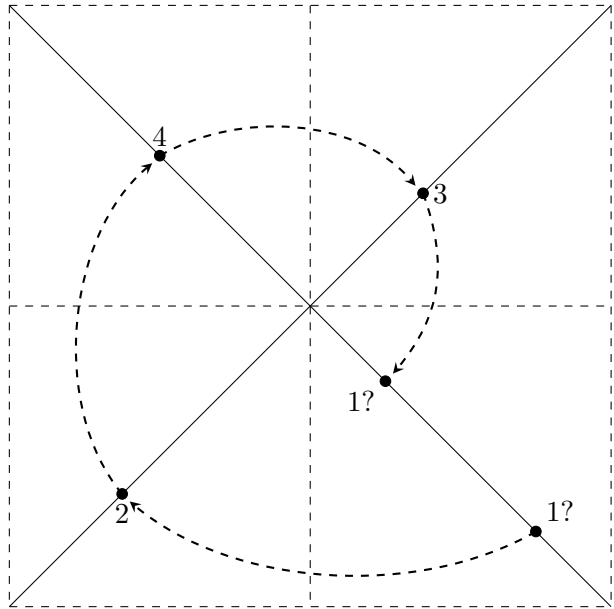
\begin{figure}[ht]
		\centering
		\begin{tikzpicture}[scale=1, transform shape]
			
			\filldraw (1.5, 1.5) circle (2pt) node[below] {2};
			\filldraw (2, 6) circle (2pt) node[above] {4};
			\filldraw (5.5, 5.5) circle (2pt) node[right] {3};
			\filldraw (7, 1) circle (2pt) node[above right] {1?};
			\filldraw (5, 3) circle (2pt) node[below left] {1?};
			
			\draw[dashed] (4, 8) -- (4, 0);
			\draw[dashed] (0, 8) -- (0, 0);
			\draw[dashed] (8, 8) -- (8, 0);
			\draw[dashed] (0, 4) -- (8, 4);
			\draw[dashed] (0, 0) -- (8, 0);
			\draw[dashed] (0, 8) -- (8, 8);
			
			\draw (0, 0) -- (8, 8);
			\draw (0, 8) -- (8, 0);
			
			\draw (1.5, 1.5) edge [out = 130, in = 220, distance = 1.5cm, thick, dashed, -stealth] (1.9, 5.9);
			\draw (2, 6) edge [out = 30, in = 130, distance = 1.2cm, thick, dashed, -stealth] (5.4, 5.6);
			\draw (5.5, 5.5) edge [out = -70, in = 50, distance = 1cm, thick, dashed, -stealth] (5.1, 3.1);
			\draw (7, 1) edge [out = 210, in = -40, distance = 1.8cm, thick, dashed, -stealth] (1.6, 1.4);
			
		\end{tikzpicture}
		\caption{An attempt to geometrically grid 2413.}
		\label{fig-ex-2413}
	\end{figure}
	
	Does it ever happen that $\grid(M)$ and $\geom(M)$ coincide? The answer is ``yes'', and the matrices $M$ for which this is the case are characterised in \cite{geometric}. To state this characterisation, we need the notion of {\em cell graph} of a matrix $M$: for a matrix $M$, the vertices of the cell graph are the non-zero entries of $M$, and two vertices are adjacent if the corresponding entries share a row or a column, and all entries between them are 0 (see Figure~\ref{fig-cell-graph}).
	
	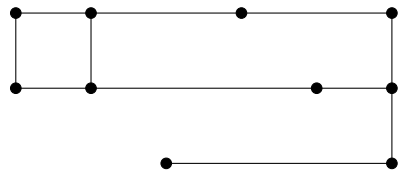
\begin{figure}[ht]
		\centering
		\begin{subfigure}[t]{0.49\linewidth}
			\centering
			\begin{tikzpicture}[scale=1.5, transform shape]
				
				\draw (0, 0) node {$\begin{pmatrix}
						-1 & 1 & 0 & 1 & 0 & -1\\
						1 & 1 & 0 & 0 & -1 & 1\\
						0 & 0 & 1 & 0 & 0 & 1
					\end{pmatrix}$};
				
			\end{tikzpicture}
			\captionsetup{justification=centering}
			\caption{The matrix $M$}	
		\end{subfigure}	
		\begin{subfigure}[t]{0.49\linewidth}
			\centering
			\begin{tikzpicture}[scale=1, transform shape]
				
				\filldraw	
				(0,2) circle (2pt) node[below left] {}
				(0,3) circle (2pt) node[below left] {}
				(1,2) circle (2pt) node[below left] {}
				(1,3) circle (2pt) node[below left] {}
				(2,1) circle (2pt) node[below left] {}
				(3,3) circle (2pt) node[below left] {}
				(4,2) circle (2pt) node[below left] {}
				(5,1) circle (2pt) node[below left] {}
				(5,2) circle (2pt) node[below left] {}
				(5,3) circle (2pt) node[below left] {};
				
				\draw (0, 3) -- (5, 3);
				\draw (0, 2) -- (5, 2);
				\draw (2, 1) -- (5, 1);
				
				\draw (0, 2) -- (0, 3);
				\draw (1, 2) -- (1, 3);
				\draw (5, 1) -- (5, 3);
				
			\end{tikzpicture}
			\captionsetup{justification=centering}
			\caption{Its cell graph.}	
		\end{subfigure}
		\caption{A matrix and its cell graph.}
		\label{fig-cell-graph}
	\end{figure}
	
	The full characterisation says that $\grid(M) = \geom(M)$ if and only if the cell graph of $M$ is a forest, and is an immediate consequence of the following:
	
	\begin{theorem}[\cite{geometric}, Theorem~3.2]\label{thm-forest-grid-geom}
		If the cell graph of $M$ is a forest, then $\grid(M) = \geom(M)$.
	\end{theorem}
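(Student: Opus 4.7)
The inclusion $\geom(M)\subseteq\grid(M)$ is immediate, so the content of the statement is $\grid(M)\subseteq\geom(M)$. The plan is induction on the number of non-zero entries of $M$, repeatedly peeling off a leaf of the cell graph. The base case, when $M$ has no non-zero entries, is vacuous since $\pi$ is then empty. For the inductive step, fix $\pi\in\grid(M)$ together with a monotone gridding witnessing this, and use the forest hypothesis to pick a leaf $c$ of the cell graph. Since $c$ has exactly one neighbour, either $c$ is the only non-zero entry in its column (its unique neighbour then lying in its row) or vice versa; the two cases are symmetric by transposition, so I assume the former.

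Let $M'$ be $M$ with the entry at $c$'s position zeroed out, and let $\pi'$ be $\pi$ with the points lying in $c$'s cell deleted. Because $c$ has no column-neighbour, deleting $c$ from the cell graph removes only $c$ and its unique incident edge without creating new adjacencies, so the cell graph of $M'$ is again a forest, now with one fewer vertex. Clearly $\pi'\in\grid(M')$, so by the inductive hypothesis there is a geometric gridding $\Gamma'$ of $\pi'$ by $M'$.

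It then remains to extend $\Gamma'$ to a geometric gridding of $\pi$ by $M$ by re-inserting the deleted points on the diagonal (or antidiagonal, according to the sign of $c$) of $c$'s cell. The key observation is that $c$'s column contains no non-zero cells of $M'$ and hence no points of $\pi'$ at all. Let $k$ be the number of $\pi$-points in cell $c$, and label them $p_1,\ldots,p_k$ so that their horizontal order in $\pi$ agrees with their ordering along the chosen diagonal of $c$ (this is possible because $c$ is monotone with the matching sign). The existing points of $\pi'$ in $c$'s row have fixed vertical coordinates in $\Gamma'$ and partition the row's vertical range into open intervals; $\pi$ prescribes, for each $p_l$, which of these intervals it must fall into. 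Because the required vertical order of $p_1,\ldots,p_k$ is monotone, the intervals they are assigned to form a monotone sequence, so a strictly monotone choice of real vertical coordinates threading through those intervals exists. The diagonal condition then forces the horizontal coordinates, and they automatically lie strictly between the two vertical gridlines bounding $c$'s column and appear there in the correct relative order.

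The main obstacle is precisely this insertion step, and this is where the forest hypothesis enters: the leaf $c$ has its sole connection to the rest of the gridding in a single direction (the row, in the case analysed), so only one direction of interleaving constraints must be satisfied, and the diagonal condition then supplies the other direction for free. If the cell graph contained a cycle, the diagonal conditions around it could impose incompatible simultaneous constraints on the horizontal and vertical positions of some cell's points, exactly the obstruction illustrated by the $2413$ example earlier in the paper. Once the insertion is carried out, verifying that the resulting figure is indeed a geometric gridding of $\pi$ by $M$ is a direct bookkeeping check.
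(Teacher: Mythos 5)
Your proof is correct and follows exactly the strategy the paper attributes to the original source: induction on the number of non-zero entries, peeling off a leaf of the cell graph and exploiting the freedom to re-place that cell's points on its diagonal. The paper itself only cites \cite{geometric} for this result and summarises the method in one sentence, so your argument simply fills in the details of essentially the same approach.
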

	
	\begin{theorem}[\cite{geometric}, Theorem~6.1]\label{thm-gg-wqo}
		Every geometrically griddable class is wqo.
	\end{theorem}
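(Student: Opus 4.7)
My plan is to mirror the argument that established Corollary~\ref{cor-bdd-let-wqo} for bounded lettericity, via an encoding by words over a finite alphabet combined with Higman's Lemma (Theorem~\ref{thm-higman-restricted}). More precisely, the goal is to associate, to each permutation $\pi \in \geom(M)$, a word $w_\pi$ over a finite alphabet $\Sigma$ depending only on $M$, such that the implication
\[
w_\sigma \leq w_\pi \text{ (as subwords)} \implies \sigma \leq \pi \text{ (as patterns)}
\]
holds. Granting such an encoding, wqo follows immediately: given any infinite sequence $(\pi_n)$ in $\geom(M)$, lift to an infinite sequence of words $(w_{\pi_n})$ in $\Sigma^*$; Higman's Lemma produces $i < j$ with $w_{\pi_i} \leq w_{\pi_j}$, which in turn forces $\pi_i \leq \pi_j$. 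Thus, no infinite antichain can exist in $\geom(M)$, and the absence of infinite strictly descending chains is automatic since $\geq$ on finite permutations is well-founded.

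To build the encoding, the natural first attempt is to read the elements of $\pi$ from left to right and record, for each, the cell of the standard figure of $M$ that contains it, using the finite set of non-zero cells of $M$ as the alphabet. The trouble is that this raw cell-sequence, while determined by $\pi$, does not fully recover $\pi$: two elements lying in distinct cells of the same row of cells share vertical range, and their vertical order depends on where exactly they sit along their respective diagonals — information that the cell-sequence discards. My plan to repair this is to first pass from $M$ to a suitably refined matrix $M'$, obtained by inserting finitely many additional horizontal and vertical lines into the gridding of $M$; this yields $\geom(M) \subseteq \geom(M')$, which is harmless because wqo is inherited by subclasses. After refinement, one can choose a \emph{consistent orientation} of the segments in the standard figure of $M'$, i.e.\ an orientation of each segment that glues together along the cell graph so as to produce well-defined linear orderings. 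Reading elements along the oriented segments and recording the sequence of cells gives an encoding $w_\pi$ whose subword embeddings correspond to selecting subsets of elements that inherit the same orientation, and hence the same pattern.

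The main obstacle will be the geometric step: showing that a refinement $M'$ and a consistent orientation of its standard figure exist, and verifying that, after these choices, the cell-sequence encoding really does satisfy the subword-to-pattern implication. This is where the geometric content of geometric griddability (as opposed to merely monotone griddability) is used — it is precisely the rigidity forced by placing points on diagonals, rather than just in monotone arrangements, that lets us control the relative order of elements from their cells plus orientation data alone. Once this is in hand, the deduction of wqo is a transparent consequence of Higman's Lemma and the fact that $\Sigma$ is finite. I would not expect to need Theorem~\ref{thm-forest-grid-geom} directly; rather, the forest/subdivision intuition from that result motivates the kind of refinement required to make the encoding well-behaved.
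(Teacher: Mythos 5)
Your plan is essentially the proof sketched in the paper (following \cite{geometric} and \cite{vatter-pwo-monotone}): your refinement $M'$ together with a consistent orientation of the segments is exactly the reduction to a partial multiplication matrix (Proposition~\ref{prop-pmm}), and your encoding $\pi \mapsto w_\pi$ amounts to choosing a preimage under the order-preserving surjection $\varphi : \Sigma^* \to \geom(M)$ over the cell alphabet, after which Higman's Lemma closes the argument just as you describe. The only point to make explicit when fleshing this out is that $w_\pi$ must record a \emph{single global} reading of all elements of $\pi$ (in the standard construction, by increasing distance from the distinguished corners), rather than separate per-segment readings, so that the interleaving of elements from collinear cells is retained.
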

	
	\begin{theorem}[\cite{murphy-profile}, Theorem~2.2]\label{thm-forest-wqo}
		$\grid(M)$ is wqo if and only if the cell graph of $M$ is a forest.
	\end{theorem}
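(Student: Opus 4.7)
The theorem has two directions, one essentially immediate and one containing all the real content. The forward direction --- if the cell graph of $M$ is a forest, then $\grid(M)$ is wqo --- follows by simply chaining together the two preceding theorems: Theorem~\ref{thm-forest-grid-geom} gives $\grid(M) = \geom(M)$ when the cell graph is a forest, and Theorem~\ref{thm-gg-wqo} then yields wqo for free. So this half of the proof is essentially a one-liner.

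The substantive direction is the converse: assuming the cell graph of $M$ contains a cycle, produce an infinite antichain in $\grid(M)$. My first step would be a reduction. Fix a shortest cycle $C$ in the cell graph, and let $M'$ be obtained from $M$ by replacing by $0$ every entry whose corresponding cell does not lie on $C$. Then $\grid(M') \subseteq \grid(M)$ (any gridding witnessing membership in $\grid(M')$ is also a gridding witnessing membership in $\grid(M)$), so it suffices to exhibit an infinite antichain in $\grid(M')$. The point of this reduction is that working in $M'$ eliminates every potential ``shortcut'' across $C$ through other non-zero cells, making it much easier to control embeddings.

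The second step is the antichain construction itself. The idea is to build permutations $\pi_n \in \grid(M')$ that \emph{wind around} $C$: for each $n$, place in each cell of $C$ a sequence of $n$ ``tour points'', with direction dictated by the sign of the corresponding entry of $M$, chained together so that the global permutation traces out the cycle $C$ exactly $n$ times before terminating. Each such $\pi_n$ lies in $\grid(M')$ by construction. The plan is then to show that, for all sufficiently large $m < n$, the permutation $\pi_m$ is not a pattern of $\pi_n$, so that $(\pi_n)_{n \geq N}$ is an infinite antichain.

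The main obstacle is precisely this last claim. The intuition is that any order-preserving embedding of $\pi_m$ into $\pi_n$ must be compatible with a gridding in $M'$, and the minimality of $C$ combined with the erasure of non-cycle cells forces the embedding to respect the winding structure, which in turn would make the winding number of $\pi_m$ at least that of $\pi_n$, contradicting $m < n$. Making this rigorous is the technical heart of the proof: one must define a notion of ``winding number'' of a subpattern relative to a chosen gridding, show it is well-defined up to small perturbations, and show it behaves monotonically under pattern containment within $\grid(M')$. This is where the shortest-cycle hypothesis is used crucially, since a non-minimal cycle could admit diagonal shortcuts that spoil the monotonicity.
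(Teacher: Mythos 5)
First, a framing note: the paper does not prove this theorem itself --- it imports it from \cite{murphy-profile} --- so there is no in-paper proof to match; your attempt has to stand on its own. Your forward direction is fine and non-circular within this paper's logic: Theorem~\ref{thm-forest-grid-geom} gives $\grid(M)=\geom(M)$ for forests and Theorem~\ref{thm-gg-wqo} gives wqo. Your reduction for the converse (zero out all entries off a shortest cycle $C$, use $\grid(M')\subseteq\grid(M)$) is also sound and is the right first move.

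The gap is in the antichain construction. A permutation $\pi_n$ that winds periodically around $C$ exactly $n$ times \emph{contains} $\pi_m$ for $m<n$: the first $m$ turns of the $n$-fold spiral are literally an occurrence of the $m$-fold spiral, so your family is a chain, not an antichain. (Compare the graphs $C_{k,l}$ in Section~4.1 of this paper, where $C_{k,m}$ is an induced subgraph of $C_{k,n}$ for $m\le n$ --- that is precisely why $\mathcal C_k$ is taken to be the downward closure of the whole family.) Your winding-number argument has the inequality backwards: an embedding of $\pi_m$ into $\pi_n$ shows that the winding number of $\pi_m$ is at \emph{most} that of $\pi_n$, which is perfectly consistent with $m<n$ and yields no contradiction. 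To repair this you must break the translation symmetry of the spiral, which is exactly what the ``fundamental antichains'' of \cite{murphy-profile} do: either attach anchor elements at the two ends of the periodic body so that any embedding is forced to match the ends and hence cannot compress the body, or drive the tour of the cells by an aperiodic word (the device this paper alludes to when discussing aperiodic obstacles). Without one of these two ingredients the construction fails, and supplying one of them --- together with the verification that the anchors really do lock every embedding --- is the actual technical heart of the converse.
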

	
	Theorem~\ref{thm-forest-grid-geom} is shown by induction, the main insight being that we have a lot of freedom to move the entries of a permutation lying in a leaf of the cell graph.
	
	Theorems~\ref{thm-gg-wqo}~and~\ref{thm-forest-wqo} together imply the converse of Theorem~\ref{thm-forest-grid-geom}. It is worth noting that Theorem~\ref{thm-gg-wqo} is (besides enumerative results) one of the big reasons geometrically griddable classes are interesting. It is also the first hint suggesting that the notions of geometric griddability and graph lettericity are related (compare with Theorem~\ref{thm-bdd-let-wqo}).
	
	\bigskip
	
	The last item we discuss in this subsection is the proof of Theorem~\ref{thm-gg-wqo}. The proof, which is a straightforward generalisation of work from \cite{vatter-pwo-monotone}, consists of defining a finite alphabet $\Sigma$ depending on $M$, then producing an order preserving, surjective map $\varphi : \Sigma^* \to \geom(M)$. $\Sigma^*$ is wqo by Higman's lemma, and Theorem~\ref{thm-gg-wqo} then immediately follows. To describe the map requires a bit of preparation; we start with a result which, despite being a simple technicality, proves to be very useful. We need a quick definition:
	\begin{definition}
		We say an $s \times t$, 0/$\pm 1$ matrix $M = (\alpha_{ij})$ is a {\em partial multiplication matrix} (``PMM'' for short) if there exist column and row signs $c_1, \dots, c_s, r_1, \dots, r_t \in \{\pm 1\}$ such that $\alpha_{ij}$ is either 0 or the product $c_ir_j$.
	\end{definition}
	
	\begin{proposition}[\cite{geometric}, Proposition~4.2]\label{prop-pmm}
		Every geometric grid class is the geometric grid class of a partial multiplication matrix.
	\end{proposition}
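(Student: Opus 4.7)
The plan is, for an arbitrary $s \times t$ matrix $M$, to build a PMM $M'$ of shape $2s \times 2t$ with $\geom(M) = \geom(M')$ by \emph{refining} $M$: I split every row and every column of $M$ in half, and distribute the signed entry of each nonzero cell of $M$ across the resulting $2 \times 2$ block so that the refinement admits alternating $\pm 1$ column and row signs. The informal idea is that, while a single sign $\alpha_{ij}$ cannot always be realised as a product $c_i r_j$, a suitable \emph{pair} of signs placed on the two halves of the cell's diagonal can.

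Concretely, given $M = (\alpha_{ij})$, I define $M' = (\alpha'_{k\ell})$ by processing each cell $(i, j)$ as follows. If $\alpha_{ij} = 0$, the corresponding $2 \times 2$ block of $M'$ is zero. If $\alpha_{ij} = +1$, set $\alpha'_{2i-1,\,2j-1} = \alpha'_{2i,\,2j} = +1$ and leave the other two entries of the block zero. If $\alpha_{ij} = -1$, set $\alpha'_{2i-1,\,2j} = \alpha'_{2i,\,2j-1} = -1$ and leave the other two entries zero. With column signs $c_k$ equal to $+1$ for $k$ odd and $-1$ for $k$ even, and row signs $r_\ell$ defined analogously, inspection of the four sub-cell types yields $\alpha'_{k\ell} = c_k r_\ell$ at every nonzero entry: in the $+1$-cells the two nonzero sub-cells have indices of matching parity, while in the $-1$-cells of opposite parity. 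Hence $M'$ is a PMM.

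The bulk of the proof then goes into showing $\geom(M) = \geom(M')$, and the key observation is that $M$ and $M'$ have the \emph{same} standard figure. Under a normalisation in which each cell of $M$ becomes a $2 \times 2$ block of unit sub-cells, the main diagonal of a $+1$-cell $(i,j)$ enters the bottom-left sub-cell, passes through the common midpoint of the four sub-cells, and exits through the top-right sub-cell---which are precisely the sub-cells where $M'$ has $+1$, and precisely along their main diagonals. The analogous statement handles antidiagonals in $-1$-cells. Consequently the unions of diagonals and antidiagonals prescribed by $M$ and by $M'$ coincide, and a geometric gridding of a permutation $\pi$ by $M$ converts to one by $M'$ simply by inserting the midlines (and conversely by forgetting them). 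The main obstacle is to formalise this last step carefully, keeping track of the normalisations and verifying that adding or removing the midlines does not disturb the placement of the points of $\pi$ in each (sub-)cell; once that is done, the rest is bookkeeping.
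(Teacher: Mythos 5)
Your construction is exactly the refinement $M^{\times 2}$ used in the paper's sketch (identity blocks for $+1$-cells, antidiagonal blocks for $-1$-cells, alternating row and column signs), and your justification that the standard figures coincide is the same observation the paper leaves as ``not difficult to see''. The proposal is correct and follows essentially the same route as the paper, just spelling out slightly more of the verification.
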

	
	\begin{proof}[Sketch of proof]
		We define a refinement $M^{\times k}$ of a matrix $M$ by replacing each entry with a $k \times k$ matrix. 0s and 1s are replaced with the 0 and identity matrices respectively, and $-1$ is replaced with a matrix with $-1$s on the antidiagonal, and 0s everywhere else. It is not difficult to see that  $\geom(M) = \geom(M^{\times k})$ for any $k$, and that $M^{\times 2}$ is always a partial multiplication matrix.
	\end{proof}
	
	When we work with partial multiplication matrices, the column and row signs will often be represented by arrows to the left of and above the standard figure of the matrix -- as in Figure~\ref{fig-row-col-signs}. Which arrow direction corresponds to which sign is immaterial, as long as it is consistent. The partial multiplication matrix condition says exactly that those arrows can be chosen to always ``agree'' with the diagonals in the cells. This choice of signs also yields a distinguished corner for each cell, as indicated in the figure by the large black dots.
	
	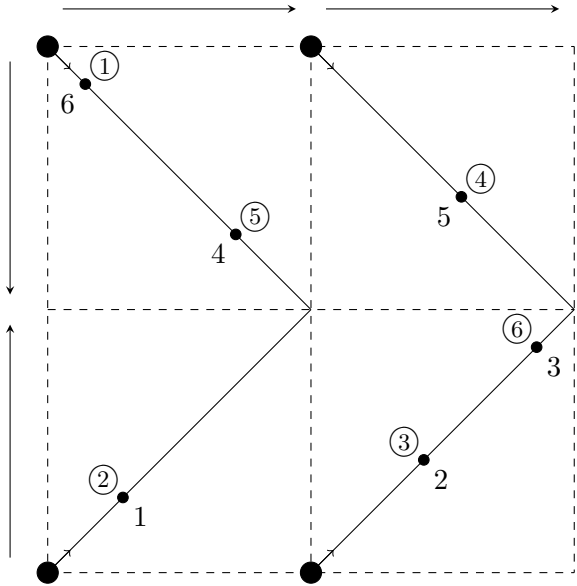
\begin{figure}[ht]
		\centering
		\begin{tikzpicture}[scale=1, transform shape]
			
			\draw[dashed] (3.5, 7) -- (3.5, 0);
			\draw[dashed] (0, 7) -- (0, 0);
			\draw[dashed] (7, 7) -- (7, 0);
			\draw[dashed] (0, 3.5) -- (7, 3.5);
			\draw[dashed] (0, 0) -- (7, 0);
			\draw[dashed] (0, 7) -- (7, 7);
			
			\draw (0, 0) -- (3.5, 3.5);
			\draw (3.5, 7) -- (7, 3.5);
			\draw (3.5, 0) -- (7, 3.5);
			\draw (0, 7) -- (3.5, 3.5);
			
			\draw (-0.5, 0.2) edge [-stealth] (-0.5, 3.3);
			\draw (-0.5, 6.8) edge [-stealth] (-0.5, 3.7);
			\draw (0.2, 7.5) edge [-stealth] (3.3, 7.5);
			\draw (3.7, 7.5) edge [-stealth] (6.8, 7.5);
			
			\filldraw 
			(0,0) circle (4pt)
			(3.5,0) circle (4pt)
			(0,7) circle (4pt)
			(3.5,7) circle (4pt);
			
			\draw (0,0) edge [->] (0.3, 0.3);
			\draw (3.5,0) edge [->] (3.8, 0.3);
			\draw (0,7) edge [->] (0.3, 6.7);
			\draw (3.5,7) edge [->] (3.8, 6.7);
			
			\filldraw (0.5,6.5) circle (2pt) node[below left] {6};
			\node[label={[label distance=-9pt]above right:{\textcircled{\raisebox{-0.55pt}{\smaller{1}}}}}] at (0.5,6.5) {};
			
			\filldraw (1,1) circle (2pt) node[below right] {1};
			\node[label={[label distance=-9pt]above left:{\textcircled{\raisebox{-0.55pt}{\smaller{2}}}}}] at (1,1) {};
			
			\filldraw (2.5,4.5) circle (2pt) node[below left] {4};
			\node[label={[label distance=-9pt]above right:{\textcircled{\raisebox{-0.55pt}{\smaller{5}}}}}] at (2.5,4.5) {};
			
			\filldraw (5,1.5) circle (2pt) node[below right] {2};
			\node[label={[label distance=-9pt]above left:{\textcircled{\raisebox{-0.55pt}{\smaller{3}}}}}] at (5,1.5) {};
			
			\filldraw (5.5,5) circle (2pt) node[below left] {5};
			\node[label={[label distance=-9pt]above right:{\textcircled{\raisebox{-0.55pt}{\smaller{4}}}}}] at (5.5,5) {};
			
			\filldraw (6.5,3) circle (2pt) node[below right] {3};
			\node[label={[label distance=-9pt]above left:{\textcircled{\raisebox{-0.55pt}{\smaller{6}}}}}] at (6.5,3) {};
			
		\end{tikzpicture}
		
		\caption{$\varphi(a_{12}a_{11}a_{21}a_{22}a_{12}a_{21}) = 614253$.}
		\label{fig-row-col-signs}
	\end{figure}
	
	Let us now describe the map $\varphi$. We define the {\em cell alphabet} of the matrix $M = (\alpha_{ij})$ as the set $\Sigma := \{a_{kl} : \alpha_{kl} \neq 0\}$, that is, the letters $a_{kl}$ correspond to non-zero entries of $M$. From a word $w = w_1w_2\dots w_t \in \Sigma^*$, we construct a permutation $\pi \in \geom(M)$ as follows: choose a set $0 < d_1 < d_2 \dots < d_t < 1$ of distances, then for $1 \leq i \leq t$, if $w_i = a_{kl}$, place a point $x_i$ on the diagonal of cell $Z_{kl}$ at infinity-norm distance $d_i$ of the distinguished corner of that cell. Figure~\ref{fig-row-col-signs} illustrates this with $M = \begin{pmatrix}
		-1 & -1 \\
		1 & 1
	\end{pmatrix}$; then $\Sigma = \{a_{11}, a_{12}, a_{21}, a_{22}\}$. If we let $w = a_{12}a_{11}a_{21}a_{22}a_{12}a_{21}$, then choosing $d_i = \frac{i}{7}$ for $i = 1, \dots, 6$ produces the permutation $614253$. The circled numbers are the indices $i$ corresponding to each entry -- one can interpret them as the order in which we insert the elements of the permutation into the picture. 
	
	It is not difficult to see that $\varphi$ is well defined, in that its value does not depend on the choice of distances. Moreover, it is onto and order preserving (\cite{geometric}, Proposition~5.3). $\varphi$ is not injective, since changing the relative order in which we add elements from {\em independent cells} (that is, cells that do not share a row or a column) does not alter the permutation.\footnote{In fact, as shown in \cite{geometric}, for any permutation $\pi$, $\varphi^{-1}(\pi)$ is an equivalence class of words where we are allowed to swap pairs of consecutive letters corresponding to independent cells. Such an equivalence class is called a {\em trace}, and the map $\varphi$ could be made bijective by defining it instead on the so-called {\em trace monoid} $\Sigma^*$ modulo this equivalence relation. Those objects have been studied relatively thoroughly, e.g., in \cite{diekert-traces}. We do not need to concern ourselves with these facts for the time being.}
	
	\subsection{Permutation graphs}
	\label{let-subsec-perm-graphs}
	
	Permutations can be related to graphs via the notion of permutation graphs. To a permutation $\pi$ on $[n]$ we associate its {\em inversion graph} $G_\pi$, whose vertex set is $[n]$ and whose edges are the pairs $\{i, j\}$ that are {\em inverted} by $\pi$, in the sense that $(i - j)(\pi(i) - \pi(j)) < 0$. The class of permutation graphs consists of all graphs that are the inversion graph of some permutation. It has several known characterisations (for instance, as intersection graphs of line segments between two parallel lines, as graphs that are simultaneously comparability and co-comparability \cite{dushnik-perm-comp-cocomp} or as comparability graphs of a poset of order dimension at most two \cite{baker-partial-orders-2}), including a minimal forbidden induced subgraph one \cite{gallai-transitiv}.
	
	The mapping from permutations to their permutation graphs is not injective, since for instance, both $2413$ and $3142$ have $P_4$ as their permutation graph. However, prime graphs with respect to modular decomposition have only two permutation representations that are inverse to each-other (in the function sense) \cite{gallai-transitiv}. We also note that, if $\sigma$ contains $\pi$ as a pattern, then $G_\sigma$ contains $G_\pi$ as an induced subgraph.
	
	\bigskip
	
	The inversion graph of a permutation is particularly easy to read from its geometric plot: two vertices are adjacent if and only if one appears to the bottom right of the other (see Figure~\ref{fig-geom-perm-graph}).
	
	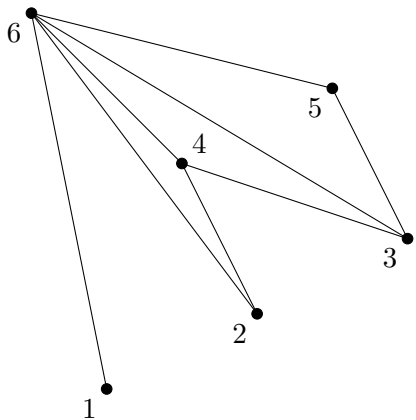
\begin{figure}[ht]
		\centering
		\begin{tikzpicture}[scale=1, transform shape]
			\filldraw 
			(1,6) circle (2pt) node[below left] {6}
			(2,1) circle (2pt) node[below left] {1}
			(3,4) circle (2pt) node[above right] {4}
			(4,2) circle (2pt) node[below left] {2}
			(5,5) circle (2pt) node[below left] {5}
			(6,3) circle (2pt) node[below left] {3};
			
			\draw (1, 6) -- (2, 1);
			\draw (1, 6) -- (3, 4);
			\draw (1, 6) -- (4, 2);
			\draw (1, 6) -- (5, 5);
			\draw (1, 6) -- (6, 3);
			
			\draw (5, 5) -- (6, 3);
			
			\draw (3, 4) -- (4, 2);
			\draw (3, 4) -- (6, 3);
			
		\end{tikzpicture}
		\caption{Permutation graph of 614253.}
		\label{fig-geom-perm-graph}
	\end{figure}
	
	This observation allows us to easily translate properties of (classes of) permutations into properties of the corresponding (classes of) permutation graphs. For instance, any increasing, respectively decreasing sequence in a permutation $\pi$ corresponds to an independent set, respectively a clique in $G_\pi$. Similarly, if $\pi$ admits a gridding by say $M = \begin{pmatrix}1, 1 \end{pmatrix}$, then $G_\pi$ is a bipartite chain graph (the entries of $\pi$ in each of the cells correspond to an independent set in $G_\pi$, and it is not difficult to see the neighbourhoods of the vertices in each part form a chain under inclusion).
	
	In general, if a class $\mathcal X$ is monotonically griddable by a matrix $M$, then there exists a $k$ such that in the corresponding graph class $\mathcal G_\mathcal X$, any graph $G$ admits a partition into at most $k$ bags, where each bag is a clique or an independent set. Those bags correspond to the non-empty cells of the gridding of the permutations by $M$. Moreover, between any two bags we have either no edges, all possible edges, or a bipartite chain graph according to the relative positions of the corresponding cells.
	
%
%
	
	\section{The structural hierarchy}
	\label{sec:hierarchy}
	
	In \cite{3let} and \cite{bddletgg}, we showed that a permutation class $\mathcal X$ is geometrically griddable if and only if the corresponding class $\mathcal G_\mathcal X$ of permutation graphs has bounded lettericity. The ``only if'' direction of the statement is fairly straightforward: for a permutation $\pi$ with a geometric gridding by a partial multiplication matrix $M$, one can carefully construct a decoder $\mathcal D$ over the ``cell alphabet'' of $M$ (whose letters are the non-empty cells of the $M$). A word $w$ such that $G(\mathcal D, w) \cong G_\pi$ is then given by the encoding $\varphi$ from Subsection~\ref{let-subsec-grid}. The ``if'' direction, shown in \cite{bddletgg}, proved to be considerably more tricky from a technical perspective. At the heart of the argument, however, is a simple idea: one first uses Theorem~\ref{thm-huczynska-vatter-griddable-characterisation} to deduce that $\mathcal X$ is {\em monotone} griddable. If one then takes a ``common refinement'' of the information coming from a monotone gridding of $\pi$, together with that coming from a letter graph expression for $G_\pi$, one obtains a geometric gridding of $\pi$ by a matrix whose size only depends on the original monotone gridding matrix.
	
	This equivalence between the two notions is no coincidence. Indeed, one can compare Theorem~\ref{thm:k-letter} with the definition of geometric griddability and of the encoding $\varphi$. In both cases, what we are describing is a partition of the object (graph or permutation) into a bounded number homogeneous bags, together with a linear order which ``plays nicely'' with those bags, in the sense that all of the adjacencies between the bags can be deduced from the linear order using simple rules. One can then ask how much further this structural analogy extends. For instance, is there an equivalent of Theorem~\ref{thm-huczynska-vatter-griddable-characterisation} in the language of graphs? This question was answered by Atminas \cite{star-forests}. A special case of his impressive structural result on star forests describes what happens when we forbid an induced matching and certain complements of an induced matching (analogous to the direct sums of 21s and skew sums of 12s). In that case, we obtain a class in which graphs can be partitioned into a bounded number of homogeneous bags, with $2K_2$-free bipartite graphs between each pair of bags. But from a structural perspective, this does not correspond exactly to monotone griddability by a matrix: indeed, monotone griddability comes with some additional restrictions on how those $2K_2$-free bipartite graph connect to each other. To make all of this clearer, let us explicitly define some graph parameters.
	
	\begin{definition}
		\label{def:proper}
		Let $G = (A, B, E)$ be a bipartite chain graph, and let $\leq_A$ and $\leq_B$ be total orders on $A$ and $B$ respectively. We say $(G, \leq_A, \leq_B)$ is {\em semi-properly ordered} if $\leq_A$ arranges the vertices in $A$ in either increasing or decreasing order of their neighbourhoods, and $\leq_B$ does the same. We say $(G, \leq_A, \leq_B)$ is {\em properly ordered} if, in addition, exactly one of $\leq_A$ and $\leq_B$ arranges its vertices in increasing order. 
	\end{definition}
	
	\begin{definition}
		Let $G$ be a graph. A {\em chain partition} of $G$ is a partition of $V(G)$ into homogeneous sets $A_1, \dots, A_t$ such that, for each $1 \leq i, j \leq t$, the bipartite subgraph consisting of $A_i, A_j$ and the edges between them is $2K_2$-free. We define the {\em chain co-chromatic number} $\gamma(G)$ of $G$ as the smallest natural number $t$ such that $V(G)$ admits a chain partition into $t$ bags. 
		
		A {\em locally semi-consistent chain partition} of $G$ is a partition of $V(G)$ into totally ordered homogeneous sets $A_1, \dots, A_t$ such that, for each $1 \leq i, j \leq t$, the subgraph consisting of $A_i, A_j$ and the edges between them is a semi-properly ordered chain graph in the sense of Definition~\ref{def:proper}. We define the {\em locally semi-consistent chain co-chromatic number} $\sigma(G)$ of $G$ as the smallest natural number $t$ such that $V(G)$ admits a locally semi-consistent chain partition into $t$ bags. 
		A class of graphs is {\em locally semi-consistent} if $\sigma$ is bounded in it.
		
		A {\em locally consistent chain partition} of $G$ is a partition of $V(G)$ into totally ordered homogeneous sets $A_1, \dots, A_t$ such that, for each $1 \leq i, j \leq t$, the subgraph consisting of $A_i, A_j$ and the edges between them is a properly ordered chain graph in the sense of Definition~\ref{def:proper}. We define the {\em locally consistent chain co-chromatic number} $\lambda(G)$ of $G$ as the smallest natural number $t$ such that $V(G)$ admits a locally consistent chain partition into $t$ bags. 
		A class of graphs is {\em locally consistent} if $\lambda$ is bounded in it.
	\end{definition}
	
	The three conditions described above are increasingly stronger, and the lettericity partition described in Theorem~\ref{thm:k-letter} -- we may call it a {\em globally consistent chain partition} --  is even stronger. We then get a chain of implications: bounded lettericity implies bounded $\lambda$ implies bounded $\sigma$ implies bounded $\gamma$. In the next section, we will show that all of those gaps are proper. Intuitively, all of them except for the one between $\gamma$ and $\sigma$ have clear analogues in the world of permutations, as we illustrate in Figure~\ref{fig-stuctural-hierarchy}.
	
	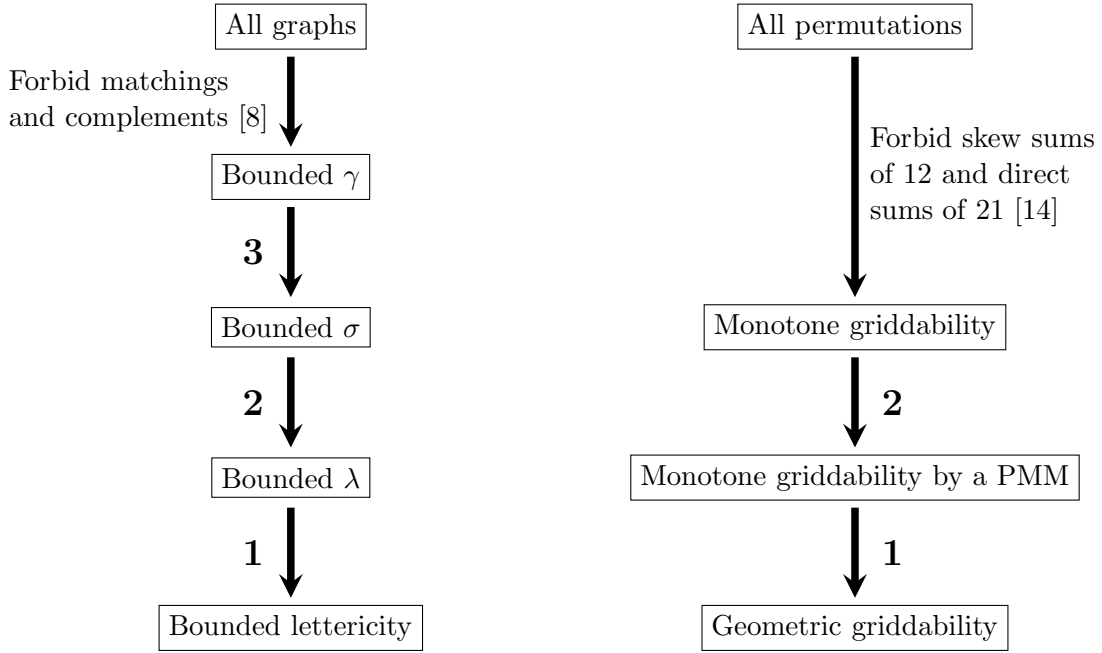
\begin{figure}[ht]
		\centering
		\begin{tikzpicture}
			
			\draw (7.5, 6) node[draw] {All permutations};
			
			\draw[-stealth, line width = 1mm] (7.5, 5.6) -- (7.5, 2.4);
			
			\draw (7.5, 2) node[draw] {Monotone griddability};
			
			\draw[-stealth, line width = 1mm] (7.5, 1.6) -- (7.5, 0.4);
			
			\draw (7.5, 0) node[draw] {Monotone griddability by a PMM};
			
			\draw[-stealth, line width = 1mm] (7.5, -0.4) -- (7.5, -1.6);
			
			\draw (7.5, -2) node[draw] {Geometric griddability};
			
			\draw (0, 6) node[draw] {All graphs};
			
			\draw (9.2, 4) node[text width = 3cm] {Forbid skew sums of 12 and direct sums of 21 \cite{monotone}};
			
			\draw (8, 1) node {\Large {\textbf{2}}};
			
			\draw (8, -1) node {\Large {\textbf{1}}};

			\draw (0, -2) node[draw] {Bounded lettericity};
			
			\draw[-stealth, line width = 1mm] (0, -0.4) -- (0, -1.6);
			
			\draw (0, 0) node[draw] {Bounded $\lambda$};
			
			\draw[-stealth, line width = 1mm] (0, 5.6) -- (0, 4.4);
			
			\draw (-2, 5) node[text width = 3.5cm] {Forbid matchings and complements \cite{star-forests}};
			
			\draw (0, 4) node[draw] {Bounded $\gamma$};
			
			\draw (0, 2) node[draw, align = center] {Bounded $\sigma$};
			
			\draw[-stealth, line width = 1mm] (0, 1.6) -- (0, 0.4);
			
			\draw (-0.5, 1) node {\Large {\textbf{2}}};
			
			\draw (-0.5, -1) node {\Large {\textbf{1}}};
			
			\draw[-stealth, line width = 1mm] (0, 3.6) -- (0, 2.4);
			
			\draw (-0.5, 3) node {\Large {\textbf{3}}};
			
		\end{tikzpicture}
		
		\caption{A hierarchy of structure}
		\label{fig-stuctural-hierarchy}
	\end{figure}
	
	This hierarchy suggests several natural questions closely related (although not directly equivalent) to Problems~\ref{prob:gg}~and~\ref{prob:bddlet}. Those questions concern the successive transitions between the structural layers. The topmost transition on the graph side has been solved by Atminas; the solution can be summarised in the following theorem, which is a straightforward consequence of the results from \cite{star-forests}:
	
	\begin{theorem}
		The only minimal classes of unbounded $\gamma$ are the class $\mathcal M$ of induced matchings, as well as some complements of this class.\footnote{We will describe more carefully what we mean by ``complements'' in the next section.}
	\end{theorem}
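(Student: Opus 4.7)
The plan is to derive the theorem as a translation of Atminas' structural theorem from \cite{star-forests} into the language of the parameter $\gamma$, and I would proceed in three stages.

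First, for the lower bound, I would verify that $\mathcal M$ is a minimal class of unbounded $\gamma$. To see that $\gamma$ is unbounded on $\mathcal M$, consider a chain partition of $nK_2$ into $t$ homogeneous bags: every clique bag has at most two vertices (hence covers at most one edge of $nK_2$), and for any two independent-set bags $A_i, A_j$, the induced bipartite graph is itself a sub-matching of $nK_2$, which is $2K_2$-free only if it contains at most one edge. Thus a chain partition of $nK_2$ into $t$ bags accounts for $O(t^2)$ edges, forcing $\gamma(nK_2) \to \infty$. For minimality, any proper hereditary subclass of $\mathcal M$ excludes some $kK_2$; the resulting graphs have at most $k-1$ edges and so $\gamma$ is trivially bounded there. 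The key observation for the ``complements'' is that $\gamma$ is complement-invariant: a bipartite graph is $2K_2$-free exactly when its vertices in each part can be ordered by inclusion of neighbourhoods, and this nested-neighbourhood property is preserved under bipartite complementation. Consequently, $\overline{\mathcal M}$ (and whichever further symmetries are made precise in Section~\ref{sec:loh}) is also a minimal class of unbounded $\gamma$.

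Second, for the upper bound -- the substantive content of the theorem -- I would invoke the structural decomposition of \cite{star-forests}. Atminas' Ramsey-type result provides, for any hereditary class $\mathcal X$ that avoids arbitrarily large induced matchings and arbitrarily large complements thereof, a bounded-size partition of every $G \in \mathcal X$ into homogeneous bags with a controlled bipartite structure between each pair. So the contrapositive of what we want -- that a class not contained in $\mathcal M$ or its complements has bounded $\gamma$ -- is effectively given by plugging suitable matching-size bounds into Atminas' theorem and reading off the resulting chain partition. A class that is not contained in $\mathcal M$ or any of its complements must avoid \emph{some} $nK_2$ and \emph{some} $\overline{mK_2}$ (and the analogous intermediate obstructions), and each such avoidance is exactly the input Atminas' theorem requires.

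The main obstacle I anticipate is the translation step: Atminas' conclusion is phrased in terms of his own structural language (bags with bipartite graphs of some controlled type), and one must verify that these bipartite graphs are in fact $2K_2$-free, not merely of some a priori weaker form. I would handle this by a local Ramsey argument: if the bipartite graph between two bags $A_i, A_j$ produced by the decomposition contained a $2K_2$, then combining this $2K_2$ with further $2K_2$'s obtained from sufficiently many parallel structures (which Atminas' theorem produces whenever the bag count is too large) would yield an induced matching of a size contradicting the avoidance assumption in $\mathcal X$. Once that verification is in place, the three stages combine to give precisely the stated classification: $\mathcal M$ and its complements are the unique minimal classes of unbounded $\gamma$.
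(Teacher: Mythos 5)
Your proposal is correct and matches the paper's approach: the paper offers no proof beyond asserting that the statement is a straightforward consequence of Atminas's structural result in \cite{star-forests}, which (for classes excluding some induced matching and suitable complements) yields exactly the bounded homogeneous-bag partition with $2K_2$-free bipartite graphs between bags, i.e.\ bounded $\gamma$. Your additional verification that $\mathcal M$ (and, by complementation symmetries, its complements) is itself minimal of unbounded $\gamma$ is correct and is the only part the paper leaves entirely implicit.
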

	
	The other transitions are not yet completely understood (although we should mention that Huczynska and Vatter's characterisation from \cite{monotone} implies that, when restricted to permutation graphs, bounded $\gamma$ is the same as bounded $\sigma$). The use of the same number next to the arrows in the figure indicates that, from a structural perspective, the transitions at play are more or less identical. In other words, any result from one of the two terminologies should in principle be easily translatable into the other; any difficulties arising would be technical, and not conceptual. Explicitly, in the graph terminology, the natural questions to ask are:
	
	\begin{problem}\label{gamma-sigma}
		Among classes of bounded $\gamma$, what are the obstacles to bounded $\sigma$?
	\end{problem}

	\begin{problem}\label{sigma-lambda}
		Among classes of bounded $\sigma$, what are the obstacles to bounded $\lambda$?
	\end{problem}

	\begin{problem}\label{lambda-let}
		Among classes of bounded $\lambda$, what are the obstacles to bounded lettericity?
	\end{problem}
	
	We remark that answers to Problems~\ref{sigma-lambda}~and~\ref{lambda-let} would, in all likelihood, provide a satisfactory answer to Problem~\ref{prob:gg}. However, while the solution for Problem~\ref{prob:bddlet} will have significant overlap with the three problems above, we might need some additional adjustments to obtain it; we will return to this in the next section.  
	
	
	\section{Our current progress}
	\label{sec:new-results}
	
	Our results on the topic are somewhat spread across the various pieces of the hierarchy presented in the previous section. We will strive to group them together in a way that makes sense. A good point to start, in view of the formulation of Problem~\ref{prob:bddlet}, is a sanity check that bounded lettericity {\em can} be characterised by minimal classes. To this end, we give the following proposition; its proof should appear unsurprising to anyone who has worked with wqo.
	
	\begin{proposition} \label{prop-let-min-class}
		Let $\mathcal{X}$ be a hereditary class of unbounded lettericity. Then there exists a hereditary class $\mathcal{X'} \subseteq \mathcal{X}$ of unbounded lettericity such that the class $\mathcal{X'} \cap \free(G)$ has bounded lettericity for any $G \in \mathcal{X'}$. 
		
	\end{proposition}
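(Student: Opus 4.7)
The plan is to find $\mathcal{X}'$ as a minimal hereditary subclass of $\mathcal{X}$ with unbounded lettericity. Such a class automatically does what is required: for every $G \in \mathcal{X}'$, the class $\mathcal{X}' \cap \free(G)$ is a hereditary subclass strictly smaller than $\mathcal{X}'$, and so by minimality cannot have unbounded lettericity. To produce $\mathcal{X}'$ I would invoke Zorn's lemma on the poset $\mathcal{P}$ of hereditary subclasses of $\mathcal{X}$ of unbounded lettericity, ordered by reverse inclusion; the only thing that needs checking is that the intersection of any descending chain in $\mathcal{P}$ is itself a member of $\mathcal{P}$, i.e.\ still has unbounded lettericity.

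Let $(\mathcal{X}_\alpha)_{\alpha \in I}$ be such a descending chain and put $\mathcal{X}_* := \bigcap_{\alpha \in I} \mathcal{X}_\alpha$. This intersection is clearly hereditary, so the task reduces to showing that $\mathcal{X}_* \not\subseteq \mathcal{L}_k$ for every $k$. The key ingredient here is Petkov\v sek's finite-basis result (\cite{letter-graphs}, Theorem~9, reproduced in the letter graphs subsection), which says $\mathcal{L}_k = \free(S_k)$ for a finite set $S_k$ of forbidden induced subgraphs. Fix $k$. Because each $\mathcal{X}_\alpha$ has unbounded lettericity, it contains some graph of lettericity $>k$, and by hereditariness this graph contributes some $H \in S_k$ to $\mathcal{X}_\alpha$. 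For each $H \in S_k$ set $A_H := \{\alpha \in I : H \in \mathcal{X}_\alpha\}$; descent of the chain combined with hereditariness makes each $A_H$ downward-closed in $I$, and the previous observation shows that the finitely many sets $A_H$ cover $I$. Downward-closed subsets of a chain are themselves linearly ordered by inclusion, so a finite cover by such sets has a maximum that must equal all of $I$. The corresponding $H$ lies in every $\mathcal{X}_\alpha$, and therefore in $\mathcal{X}_*$, witnessing $\mathcal{X}_* \not\subseteq \mathcal{L}_k$.

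Since $k$ was arbitrary, $\mathcal{X}_*$ has unbounded lettericity and is the desired lower bound of the chain, so Zorn's lemma supplies a minimal element $\mathcal{X}' \in \mathcal{P}$ and the proof is complete. The only real obstacle is the chain step, and it turns crucially on the finiteness of each $S_k$: without it one could not pigeonhole a single surviving obstruction into $\mathcal{X}_*$, and unbounded lettericity could in principle evaporate in the limit.
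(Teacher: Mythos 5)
Your proof is correct, but it takes a genuinely different route from the paper's. The paper runs a greedy transfinite-free construction: it repeatedly forbids a \emph{minimum-size} graph $G_k$ whose removal keeps lettericity unbounded, and if this never terminates it analyses the limit class $\mathcal X_{\lim}$ directly. There, unboundedness of lettericity in the limit is established by a well-quasi-ordering argument: the $G_i$ form an infinite antichain, and each would have lettericity at most $2t+1$ if $\mathcal X_{\lim}$ had lettericity bounded by $t$, contradicting Theorem~\ref{thm-bdd-let-wqo}; minimality of $\mathcal X_{\lim}$ then follows from the minimum-size choice of the $G_i$. You instead apply Zorn's lemma to the poset of hereditary subclasses of unbounded lettericity under reverse inclusion, and verify the chain condition by combining Petkov\v sek's finite-basis theorem for $\mathcal L_k$ with a pigeonhole over the finitely many minimal obstructions $S_k$: your observation that the sets $A_H$ are downward-closed, hence linearly ordered, hence one of them is all of $I$, is exactly right, and it correctly yields some $H\in S_k\cap\mathcal X_*$ witnessing $\mathcal X_*\not\subseteq\mathcal L_k$. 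The trade-off is instructive. Your argument is shorter and handles arbitrary (not just countable) descending chains in one stroke, and it isolates precisely what is needed: the statement holds for any parameter whose sublevel classes are hereditary and finitely based. The paper's argument uses slightly more primitive inputs (wqo of bounded-lettericity classes plus the $2t+1$ bound under vertex deletion, which are the very ingredients used to prove the finite-basis theorem you invoke), and its explicit decreasing sequence of minimum-size obstructions gives a somewhat more concrete handle on the minimal class obtained. Neither argument is circular, but yours leans on a higher-level black box; given that the paper later conjectures the analogous statements for $\sigma$ and $\lambda$, where finite bases for the sublevel classes are not known, the paper's lower-level route is arguably better suited to eventual generalisation, while yours makes the logical dependence on finite bases transparent.
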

	
	\begin{proof}
		
		If $\mathcal{X}$ is minimal of unbounded lettericity, we are done. Otherwise, $\mathcal{X}$ contains a graph $G$ such that $\mathcal{X} \cap \free(G)$ still has unbounded lettericity. Pick a graph $G_0 \in \mathcal{X}_0 := \mathcal{X}$ with this property that has the minimum possible number of vertices, and put $\mathcal{X}_1 := \mathcal{X}_0 \cap \free(G_0)$. Repeat the process for as long as possible, putting $\mathcal{X}_{k+1} := \mathcal{X}_k \cap \free(G_k)$, where $G_k \in \mathcal{X}_k$ is a minimum graph such that $\mathcal{X}_{k+1}$ has unbounded lettericity. There are two cases:
		
		\begin{itemize}
			
			\item The process stops at some $k$. This means we have found a subclass $\mathcal{X}_k = \mathcal{X} \cap \free(G_0, ..., G_{k-1})$ of unbounded lettericity such that forbidding any further $G \in \mathcal{X}_k$ yields a class of bounded lettericity, and $\mathcal X_k$ is the minimal class we were looking for. 
			
			\item Otherwise, the process goes on forever, and we get an infinite strictly descending chain $\mathcal{X}_0 \supsetneq \mathcal{X}_1 \supsetneq \dots$ of classes, all of which have unbounded lettericity. Let $\mathcal{X}_{\lim}$ be their intersection, that is, $\mathcal{X}_{\lim} = \mathcal{X} \cap \free(G_0, G_1, ...)$. 
			
			Note that this limit class cannot have bounded lettericity. Indeed, suppose $\mathcal{X}_{\lim}$ has lettericity at most $t$. The $G_i$s are by construction incomparable via the induced subgraph relation (since at each stage, they are chosen to be minimal), and from \cite{letter-graphs}, they all have lettericity bounded above by $2t + 1$ (since for any $v \in G_i, i \in \mathbb{N}, G_i - v$ is in $\mathcal{X}_{\lim}$). This is a contradiction, since classes of bounded lettericity are well-quasi-ordered (Theorem~\ref{thm-bdd-let-wqo}).
			
			Moreover, $\mathcal{X}_{\lim}$ is minimal of unbounded lettericity. To see this, note first that, by construction, $|G_i| \leq |G_j|$ for $i \leq j$, and $|G_n| \to \infty$ as $n \to \infty$ (since there are only finitely many graphs of a given size). Suppose we can forbid $G \in \mathcal{X}_{\lim}$ with $|G| = k$, and we are still left with a class of unbounded lettericity. But then, by construction, $G$ would have appeared in the sequence $(G_i)$ before any graphs of size at least $k + 1$, contradicting $G \in \mathcal{X}_{\lim} = \mathcal{X} \cap \free(G_1, G_2, \dots)$.
		\end{itemize} 
	
	\end{proof}

	This shows, in particular, that there exists some list $L$ of classes with the property that any class $\mathcal X$ has bounded lettericity if and only if it contains no class from $L$. Describing $L$ is the most straightforward way to answer Problem~\ref{prob:bddlet}. As of yet, we have not checked whether analogues to this proposition hold for $\lambda$ and $\sigma$ (for $\gamma$, this is a direct consequence of the results from \cite{star-forests}). We leave those questions here as conjectures, since our evidence so far supports this formulation, and we have no reason to believe the answer is negative:
	
	\begin{conjecture}
		Any class of unbounded $\sigma$ contains a minimal class of unbounded $\sigma$.
	\end{conjecture}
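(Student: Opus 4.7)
The plan is to adapt the proof of Proposition~\ref{prop-let-min-class} almost verbatim, with $\sigma$ playing the role of lettericity. Starting from $\mathcal{X}_0 := \mathcal{X}$, I would iteratively construct a strictly descending chain $\mathcal{X}_0 \supsetneq \mathcal{X}_1 \supsetneq \dots$ of classes of unbounded $\sigma$, at each stage picking a graph $G_k \in \mathcal{X}_k$ of minimum possible order such that $\mathcal{X}_{k+1} := \mathcal{X}_k \cap \free(G_k)$ still has unbounded $\sigma$. If the process terminates at some $k$, then $\mathcal{X}_k$ is the minimal class sought; otherwise, I would set $\mathcal{X}_{\lim} := \bigcap_{k \geq 0} \mathcal{X}_k = \mathcal{X} \cap \free(G_0, G_1, \dots)$ and argue that this is the desired minimal class. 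As in the original proposition, the fact that $|G_k|$ is non-decreasing and tends to infinity shows both that every proper induced subgraph of each $G_k$ lies in $\mathcal{X}_{\lim}$, and that $\mathcal{X}_{\lim}$ is minimal of unbounded $\sigma$ provided it has unbounded $\sigma$.

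The first technical ingredient is a recursive bound of the form $\sigma(G) \leq 2\sigma(G - v) + 1$. Given a locally semi-consistent chain partition of $G - v$ into $t$ bags $A_1, \dots, A_t$, one splits each $A_i$ into $A_i \cap N(v)$ and $A_i \setminus N(v)$, inheriting the linear order from $A_i$. The sub-bags remain homogeneous, and the pairwise bipartite subgraphs remain semi-properly ordered chain graphs, since each chain of neighbourhoods restricts to a chain after intersection with a fixed set. Adjoining $\{v\}$ as a singleton bag (trivially semi-properly adjacent to every other bag) then yields a locally semi-consistent chain partition of $G$ into at most $2t + 1$ bags. Consequently, if $\sigma(\mathcal{X}_{\lim}) \leq t$, every $G_k$ satisfies $\sigma(G_k) \leq 2t + 1$.

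The hard part will be the second ingredient: the antichain $\{G_k\}$ lives in the class of graphs of $\sigma \leq 2t + 1$, and I need to argue that it must therefore be finite, yielding a contradiction. Unlike for lettericity, where Theorem~\ref{thm-bdd-let-wqo} is available, no wqo theorem is known here, and one cannot expect one in full generality: restricted to permutation graphs, bounded $\sigma$ coincides with monotone griddability, which can fail wqo by Theorem~\ref{thm-forest-wqo}. A weaker statement that would suffice for the argument is that, for each fixed $k$, the class of graphs of $\sigma$ at most $k$ admits only finitely many minimal forbidden induced subgraphs. A natural attack is to encode locally semi-consistent chain partitions, together with the monotone functions describing the pairwise chain adjacencies, as words over a finite alphabet in a way compatible with the induced subgraph order, and then apply Higman's lemma. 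Making this encoding robust under induced subgraph embeddings, where no partition is given in advance and where the optimal choice of partition may have to change after deletion of vertices, is where I expect the main technical difficulty to lie.
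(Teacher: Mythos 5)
The statement you were asked to prove is not proved in the paper: it is explicitly left as a conjecture, precisely because the ingredient that makes Proposition~\ref{prop-let-min-class} work --- well-quasi-orderedness of classes of bounded lettericity (Theorem~\ref{thm-bdd-let-wqo}) --- has no known analogue for $\sigma$. Your proposal correctly reproduces the skeleton of that proposition, and your first ingredient is sound: splitting each bag $A_i$ of a locally semi-consistent partition of $G-v$ into $A_i \cap N(v)$ and $A_i \setminus N(v)$ is exactly what is needed so that the neighbourhood of $v$ meets each new bag trivially, and restrictions of semi-properly ordered chain graphs remain semi-properly ordered, giving $\sigma(G) \le 2\sigma(G-v)+1$. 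But the crux --- showing the infinite antichain $\{G_k\}$ cannot exist --- is left open, as you acknowledge, so the proposal does not close the conjecture; it lands in essentially the same place the authors did when they chose to state it as a conjecture.

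Two further remarks on the gap. First, the ``weaker statement'' you propose (that $\{G : \sigma(G)\le k\}$ has finitely many minimal forbidden induced subgraphs) does \emph{not} suffice for the argument as you have structured it: a finitely based class can perfectly well contain infinite antichains (the class of all graphs is finitely based, with empty basis), so finite basis for $\{\sigma \le 2t+1\}$ does not contradict the existence of the antichain $\{G_k\}$ inside it. The finite-basis statement would in fact imply the conjecture, but by a different mechanism: if $F_k$ denotes the finite basis of $\{\sigma\le k\}$, then a hereditary class has unbounded $\sigma$ if and only if it meets every $F_k$, and a descending chain of such classes has an intersection that still meets every $F_k$ (a chain of non-empty subsets of a finite set has non-empty intersection), so Zorn's lemma produces a minimal class --- no antichain argument is needed. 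Second, even granting this repair, the finite-basis statement itself is unproven and is where all the difficulty lies; your Higman-style encoding sketch faces exactly the obstruction you name (no canonical partition surviving vertex deletion), and nothing in the paper supplies it. So the proposal should be read as a reduction of the conjecture to an open finite-basis (or compactness) statement, not as a proof.
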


	\begin{conjecture}
		Any class of unbounded $\lambda$ contains a minimal class of unbounded $\lambda$.
	\end{conjecture}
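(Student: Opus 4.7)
The plan is to mirror the proof of Proposition~\ref{prop-let-min-class} essentially verbatim, replacing lettericity with $\lambda$ throughout. Specifically, if $\mathcal{X}$ is not already minimal, iteratively pick a minimum-size graph $G_k \in \mathcal{X}_k$ whose exclusion leaves a class $\mathcal{X}_{k+1} := \mathcal{X}_k \cap \free(G_k)$ still of unbounded $\lambda$. If the process terminates, we are done. Otherwise, pass to $\mathcal{X}_{\lim} := \mathcal{X} \cap \free(G_0, G_1, \dots)$; the same minimum-size argument as in Proposition~\ref{prop-let-min-class} then shows $\mathcal{X}_{\lim}$ is minimal, provided we can derive a contradiction from the assumption that $\mathcal{X}_{\lim}$ has bounded $\lambda$.

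To produce that contradiction, two ingredients from the lettericity world must be lifted to the $\lambda$ setting. First, a \emph{boost lemma}: if $G$ is a minimal graph with $\lambda(G) > t$, then $\lambda(G) \leq f(t)$ for some explicit function $f$, since $\lambda(G - v) \leq t$ for every $v \in V(G)$. The natural approach here is to pick two vertices $v_1, v_2$, take locally consistent chain partitions of $G - v_1$ and $G - v_2$, refine by intersection (which preserves homogeneity, the chain-graph property between bags, and admits a compatible local ordering inherited from one of the two partitions), and then reinsert $v_1, v_2$ in their own singleton bags. This gives $\lambda(G) \leq t^2 + 2$ or thereabouts. Second, and more critically, a \emph{wqo lemma}: for each $k$, the class $\{G : \lambda(G) \leq k\}$ is well-quasi-ordered by the induced subgraph relation. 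Given these two lemmas, the $G_i$ form an antichain all lying in a wqo class, yielding the desired contradiction.

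The hard part is the wqo lemma. Unlike lettericity, which benefits from a single global linear order encoding the entire graph as a word and thus reduces directly to Higman's lemma, the $\lambda$ structure equips each bag with its own linear order, with no prescribed global ordering between bags. The plan is to encode a graph with $\lambda(G) \leq k$ as a tuple $(w_1, \dots, w_k)$ of words over an alphabet recording, for each vertex of bag $i$, enough data to recover its adjacency with every other bag $j$ (in a chain graph with properly ordered sides, a vertex in bag $i$ is described by the position in $\leq_j$ at which its neighbourhood in $A_j$ stops; this is monotone in $\leq_i$). Higman's lemma applied to each coordinate, combined with the product-of-wqo property for finite products, should then yield wqo of the encoding, and one would check that the embeddings lift to induced subgraph embeddings of graphs. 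The nontrivial technical issue is that the encoding depends on the choice of partition and of the orderings, which are not canonical; one likely has to work with equivalence classes of encodings, or prove wqo directly by choosing partitions coherently along any infinite sequence.

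I expect the wqo lemma to be the main obstacle, which is consistent with the authors having left the statement as a conjecture rather than a theorem: the same skeleton of Proposition~\ref{prop-let-min-class} reduces the whole question to wqo of bounded-$\lambda$ classes, and it is the transition from a global linear order to a collection of local ones that breaks the direct appeal to Higman's lemma.
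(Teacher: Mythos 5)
There is no proof to compare against: the paper explicitly leaves this statement as a conjecture, remarking just before it that the analogue of Proposition~\ref{prop-let-min-class} has not been checked for $\lambda$ and $\sigma$. So your proposal should be judged on its own merits, and there it has one sound part and one fatal gap. The skeleton (iteratively forbidding minimum-size graphs and passing to the limit class) transfers verbatim, and your boost lemma is fine --- in fact it is easier than you make it: a singleton bag is trivially homogeneous and properly ordered against every other bag, so $\lambda(G) \leq \lambda(G - v) + 1$ directly, with no need to refine two partitions.

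The gap is that your ``wqo lemma'' is not merely the hard step --- it is false, and the paper itself contains the evidence. Bounded $\lambda$ is the graph-side analogue of monotone griddability by a partial multiplication matrix (see Figure~\ref{fig-stuctural-hierarchy}), and Theorem~\ref{thm-forest-wqo} states that $\grid(M)$ is wqo only when the cell graph of $M$ is a forest. A PMM whose cell graph is a cycle (e.g.\ the all-ones $2 \times 2$ matrix) therefore yields a non-wqo class sitting entirely at the bounded-$\lambda$ level; the aperiodic fundamental antichains of Murphy and Vatter, which the paper discusses, live exactly here, and they are built from prime permutations, so the corresponding permutation graphs form an infinite antichain of graphs of uniformly bounded $\lambda$. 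This is consistent with the technical failure you yourself flag: your per-bag encoding records, for each vertex, a \emph{position} in the order of another bag, which is an unbounded alphabet; collapsing it to a finite alphabet requires interleaving the bags into a single global word, which is precisely the passage from $\lambda$ to lettericity. Consequently the final step of Proposition~\ref{prop-let-min-class} --- ``the $G_i$ form an antichain inside a wqo class, contradiction'' --- has no analogue for $\lambda$, and any proof of the conjecture must rule out the limit antichain by some other means. Reducing the conjecture to wqo of bounded-$\lambda$ classes is a dead end rather than a reduction to an open problem.
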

	
	\bigskip
	
	The next point we want to make is that the solutions to our problems present certain symmetries -- the so-called ``complementations'' that we mentioned in passing. The easiest way to see those symmetries is through the example of induced matchings. As we have already seen in Example~\ref{ex-let-matchings}, induced matchings are a minimal class of unbounded lettericity. By symmetry, one sees that the complements of matchings are also an example. In fact, one can produce more examples by taking various complements that correspond intuitively to making changes in the decoder. For instance, let us consider a letter graph expression of $nK_2$ with a fixed bipartition. By at most doubling the number of letters, we can obtain an expression that uses each letter in only one part of the bipartition; making appropriate changes to the decoder, the same word will then express, say, a split graph with a co-matching between the clique and the independent set. From here, it is easy to show that the class of graphs consisting of a clique co-matched to an independent set is another minimal class of unbounded lettericity. In the cases we study, it is usually safe to assume that, if a class is minimal of unbounded lettericity, so are the classes obtained from it by complementing homogeneous sets, or edges between homogeneous sets. We will not dwell on the details, since they are immaterial to our discussion. In the remainder of the paper, we will use the word ``complements'' to refer to all of the classes obtained from a given class via such complementations; it should be clear from context what we refer to (or at the very least, it should be clear how one would be able to obtain a precise description of what we refer to). The reader who wants to see an illustrated example of those complements is invited to consult, for instance, \cite{star-forests}.
	
	\bigskip
	
	Let us now summarise what we know about the individual transitions in the graph setting. As shown in \cite{star-forests}, the first transition, between all classes and classes of bounded $\gamma$, happens when we forbid matchings and complements; those are minimal classes of unbounded $\gamma$, and also minimal classes of unbounded lettericity. We will examine the other three transitions separately. In each case, we will first show that the transition is non-trivial, by providing explicit examples. Second, we will provide a discussion about what we believe all obstacles should look like, and how one would go about finding them, with partial results and conjectures as appropriate (that is, we will talk about how one may tackle Problems~\ref{gamma-sigma},~\ref{sigma-lambda}~and~\ref{lambda-let}). Finally, we will relate the transitions to Problem~\ref{prob:bddlet} by discussing what the obstacles to bounded lettericity are in each layer of the hierarchy. For convenience, we will start at the bottom of the hierarchy.
	
	\subsection{Between bounded $\lambda$ and bounded lettericity}
	
	The first transition that we examine in detail is the one that kicked off this entire research direction -- in the terminology of permutations, it is the transition between monotone griddability by a PMM to geometric griddability. As claimed, we will start by giving some examples lying ``between'' $\lambda$ and lettericity, that is, we will produce families of bounded $\lambda$, but unbounded lettericity. Several of those examples occurred ``in the wild'', while trying to understand lettericity in various restricted contexts (for instance, within a certain class of graphs). It is interesting to note that the same very similar examples kept recurring. For instance, in \cite{alecu-2p3}, we identify minimal classes of unbounded lettericity among one-sided $2P_3$-free bipartite graphs. Our list consists of induced matchings, their bipartite complements, and a third class that we dubbed ``double-chain graphs'': the universal graphs for it look like prime chain graphs, except that each vertical edge is inflated to a $2K_2$. An identical example up to some complementations arises independently in Ferguson and Vatter's study of classes with only finitely many prime graphs \cite{primelet}.\footnote{This same example will appear once more in a somewhat unexpected place a bit later in this paper; the reader may wish to keep an eye out for it!} Those examples are instances of a ``cycles-in-a-chain'' construction\footnote{Although ``chain graphs-in-a-cycle'' is perhaps also an adequate way to describe it.} illustrated in Figure~\ref{fig-ex-cc}.
	
	To put this example into context, we start by describing a somewhat simplified setting in which it lives. This setting more or less corresponds to the monotone grid class of PMM whose cell graph is a cycle. It is also straightforward generalisation of the ``nice'' graphs studied in \cite{3let}. That $\lambda$ is bounded in those graphs is an immediate consequence of the definition.

	\begin{definition}
		Let $k \geq 3$. A \emph{$k$-chain circuit} ($k$-CC for short) is a graph whose vertex set can be partitioned into $k$ independent sets (or ``bags'') $A_1, \dots, A_{k}$ with indices modulo $k$, such that:
		\begin{itemize}
			\item the only edges appear between consecutive $A_i$;
			\item the edges between consecutive $A_i$ induce chain graphs;
			\item if we order the vertices from $A_i$ in decreasing order with respect to their neighbourhoods in $A_{i+1}$, then that order is increasing with respect to the neighbourhoods in $A_{i-1}$.
		\end{itemize}
		We will refer to $\{A_i : 1 \leq i \leq k\}$ as the \emph{chain circuit partition}, and we will assume the vertices in each bag are ordered as described above. If $v$ comes before $w$ in the order, we write $v \preceq w$, and we say $v$ is ``to the left'' of $w$ (and $w$ is ``to the right'' of $v$).
		Given a chain circuit, we can also define its \emph{chain circuit complement} by complementing all edges between consecutive bags. It is clear that the graph thus obtained is still a chain circuit.
		
	\end{definition}
	
	Amongst $k$--chain circuits, we look at two subclasses of graphs. First, we introduce some notation:
	
	\begin{notation} \label{not-cycles}
		Let $C_{k,l}$ denote the chain circuit obtained by taking a union of $l$ $k$--cycles, with the $j$-th cycle labelled $v_{1,j}, v_{2,j}, ..., v_{k,j}$ (the first index is modulo $k$), and by adding edges between vertices $v_{i,m}$ and $v_{i+1, n}$ whenever $m < n$. See Figure~\ref{fig-ex-cc} for two representations of the graph $C_{4, 4}$.
	\end{notation}
	
	\begin{figure}[ht]
		\centering
		\begin{subfigure}[ht]{0.49\linewidth}
			\centering
			\begin{tikzpicture}[scale = 1.5, transform shape]
				
				\filldraw (-15pt, -15pt) circle (1pt) node[below left] {};
				\filldraw (-5pt, -5pt) circle (1pt) node[below left] {};
				\filldraw (5pt, 5pt) circle (1pt) node[below left] {};
				\filldraw (15pt, 15pt) circle (1pt) node[below left] {};
				
				\filldraw[rotate around ={90:(1.25, 1.25)}] (-15pt, -15pt) circle (1pt) node[below left] {};
				\filldraw[rotate around ={90:(1.25, 1.25)}] (-5pt, -5pt) circle (1pt) node[below left] {};
				\filldraw[rotate around ={90:(1.25, 1.25)}] (5pt, 5pt) circle (1pt) node[below left] {};
				\filldraw[rotate around ={90:(1.25, 1.25)}] (15pt, 15pt) circle (1pt) node[below left] {};
				
				\filldraw[rotate around ={180:(1.25, 1.25)}] (-15pt, -15pt) circle (1pt) node[below left] {};
				\filldraw[rotate around ={180:(1.25, 1.25)}] (-5pt, -5pt) circle (1pt) node[below left] {};
				\filldraw[rotate around ={180:(1.25, 1.25)}] (5pt, 5pt) circle (1pt) node[below left] {};
				\filldraw[rotate around ={180:(1.25, 1.25)}] (15pt, 15pt) circle (1pt) node[below left] {};
				
				\filldraw[rotate around ={270:(1.25, 1.25)}] (-15pt, -15pt) circle (1pt) node[below left] {};
				\filldraw[rotate around ={270:(1.25, 1.25)}] (-5pt, -5pt) circle (1pt) node[below left] {};
				\filldraw[rotate around ={270:(1.25, 1.25)}] (5pt, 5pt) circle (1pt) node[below left] {};
				\filldraw[rotate around ={270:(1.25, 1.25)}] (15pt, 15pt) circle (1pt) node[below left] {};
				
				\draw (-15pt, 2.5cm + 15pt) -- (-15pt, -15pt);
				\draw (-15pt, 2.5cm + 15pt) -- (-5pt, -5pt);
				\draw (-15pt, 2.5cm + 15pt) -- (5pt, 5pt);
				\draw (-15pt, 2.5cm + 15pt) -- (15pt, 15pt);
				
				\draw (-5pt, 2.5cm + 5pt) -- (-5pt, -5pt);
				\draw (-5pt, 2.5cm + 5pt) -- (5pt, 5pt);
				\draw (-5pt, 2.5cm + 5pt) -- (15pt, 15pt);
				
				\draw (5pt, 2.5cm - 5pt) -- (5pt, 5pt);
				\draw (5pt, 2.5cm - 5pt) -- (15pt, 15pt);
				
				\draw (15pt, 2.5cm - 15pt) -- (15pt, 15pt);

				\draw[rotate around ={90:(1.25, 1.25)}] (-15pt, 2.5cm + 15pt) -- (-15pt, -15pt);
				\draw[rotate around ={90:(1.25, 1.25)}] (-15pt, 2.5cm + 15pt) -- (-5pt, -5pt);
				\draw[rotate around ={90:(1.25, 1.25)}] (-15pt, 2.5cm + 15pt) -- (5pt, 5pt);
				\draw[rotate around ={90:(1.25, 1.25)}] (-15pt, 2.5cm + 15pt) -- (15pt, 15pt);
				
				\draw[rotate around ={90:(1.25, 1.25)}] (-5pt, 2.5cm + 5pt) -- (-5pt, -5pt);
				\draw[rotate around ={90:(1.25, 1.25)}] (-5pt, 2.5cm + 5pt) -- (5pt, 5pt);
				\draw[rotate around ={90:(1.25, 1.25)}] (-5pt, 2.5cm + 5pt) -- (15pt, 15pt);
				
				\draw[rotate around ={90:(1.25, 1.25)}] (5pt, 2.5cm - 5pt) -- (5pt, 5pt);
				\draw[rotate around ={90:(1.25, 1.25)}] (5pt, 2.5cm - 5pt) -- (15pt, 15pt);
				
				\draw[rotate around ={90:(1.25, 1.25)}] (15pt, 2.5cm - 15pt) -- (15pt, 15pt);

				\draw[rotate around ={180:(1.25, 1.25)}] (-15pt, 2.5cm + 15pt) -- (-15pt, -15pt);
				\draw[rotate around ={180:(1.25, 1.25)}] (-15pt, 2.5cm + 15pt) -- (-5pt, -5pt);
				\draw[rotate around ={180:(1.25, 1.25)}] (-15pt, 2.5cm + 15pt) -- (5pt, 5pt);
				\draw[rotate around ={180:(1.25, 1.25)}] (-15pt, 2.5cm + 15pt) -- (15pt, 15pt);
				
				\draw[rotate around ={180:(1.25, 1.25)}] (-5pt, 2.5cm + 5pt) -- (-5pt, -5pt);
				\draw[rotate around ={180:(1.25, 1.25)}] (-5pt, 2.5cm + 5pt) -- (5pt, 5pt);
				\draw[rotate around ={180:(1.25, 1.25)}] (-5pt, 2.5cm + 5pt) -- (15pt, 15pt);
				
				\draw[rotate around ={180:(1.25, 1.25)}] (5pt, 2.5cm - 5pt) -- (5pt, 5pt);
				\draw[rotate around ={180:(1.25, 1.25)}] (5pt, 2.5cm - 5pt) -- (15pt, 15pt);
				
				\draw[rotate around ={180:(1.25, 1.25)}] (15pt, 2.5cm - 15pt) -- (15pt, 15pt);

				\draw[rotate around ={270:(1.25, 1.25)}] (-15pt, 2.5cm + 15pt) -- (-15pt, -15pt);
				\draw[rotate around ={270:(1.25, 1.25)}] (-15pt, 2.5cm + 15pt) -- (-5pt, -5pt);
				\draw[rotate around ={270:(1.25, 1.25)}] (-15pt, 2.5cm + 15pt) -- (5pt, 5pt);
				\draw[rotate around ={270:(1.25, 1.25)}] (-15pt, 2.5cm + 15pt) -- (15pt, 15pt);
				
				\draw[rotate around ={270:(1.25, 1.25)}] (-5pt, 2.5cm + 5pt) -- (-5pt, -5pt);
				\draw[rotate around ={270:(1.25, 1.25)}] (-5pt, 2.5cm + 5pt) -- (5pt, 5pt);
				\draw[rotate around ={270:(1.25, 1.25)}] (-5pt, 2.5cm + 5pt) -- (15pt, 15pt);
				
				\draw[rotate around ={270:(1.25, 1.25)}] (5pt, 2.5cm - 5pt) -- (5pt, 5pt);
				\draw[rotate around ={270:(1.25, 1.25)}] (5pt, 2.5cm - 5pt) -- (15pt, 15pt);
				
				\draw[rotate around ={270:(1.25, 1.25)}] (15pt, 2.5cm - 15pt) -- (15pt, 15pt);

			\end{tikzpicture}	
			\captionsetup{justification=centering}
			\caption{Bags in a cycle \\ with chain graphs between them.}
		\end{subfigure}
		\begin{subfigure}[ht]{0.49\linewidth}
			\centering
			\begin{tikzpicture}[scale = 0.5, transform shape]
				
				\clip (-4,-2.5) rectangle ++(7.1,14);
				
				\foreach \i in {0,...,3} {
					\foreach \x in {0,...,3} {
						\filldraw (\i, 3 * \x) circle (2pt) node{};
					}
				}
				
				\foreach \i in {-1,...,3} {
					\foreach \x in {0,...,3} {
						\foreach \y in {\x, ..., 3} {
							\draw (\y, 3 * \i) -- (\x, 3 * \i + 3);
						}
					}
				}
				
				\filldraw[white] (-0.1, -3.1) rectangle ++(3.2, 2.3);
				\filldraw[white] (-0.1, 12.1) rectangle ++(3.2, -2.3);
				
				\draw (0.5,10) edge [bend right = 140, very thick, looseness = 1.6, dashed] (0.5,-1);
				\draw (1.5,10) edge [bend right = 140, thick, looseness = 1.6, dashed] (1.5,-1);
				\draw (2.5,10) edge [bend right = 140, looseness = 1.6, dashed] (2.5,-1);
				
			\end{tikzpicture}	
			\captionsetup{justification=centering}
			\caption{Vertical cycles with \\ matchings between them}
		\end{subfigure}
		\caption{Two representations of the chain circuit $C_{4, 4}$.}
		\label{fig-ex-cc}
	\end{figure}
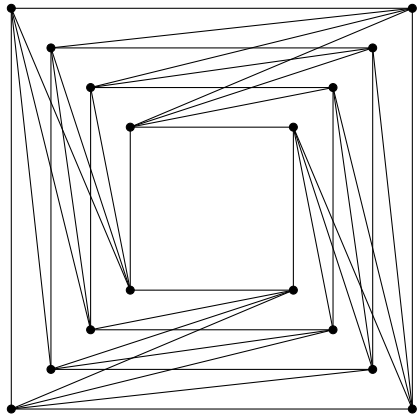
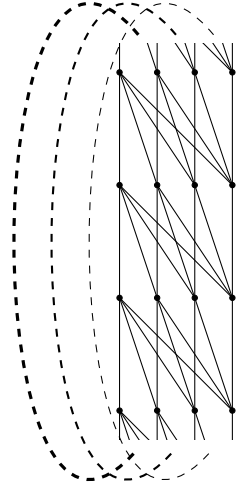

	The classes we are interested in are the class $\mathcal C_k$ of all graphs $C_{k, l}$ and their induced subgraphs, and the class $\widetilde{\mathcal C_k}$ of their CC-complements. We note that the minimal classes identified in both \cite{alecu-2p3} and \cite{primelet} are, up to some complementations, exactly the cycles-in-a-chain construction $C_{4, l}$. Our first result is that  $\mathcal C_k$ and $\widetilde{\mathcal C_k}$ have unbounded lettericity (that is, they are our classes of bounded $\lambda$ but unbounded lettericity). In view of \cite{alecu-2p3} and \cite{primelet}, the proof of this result can be seen as a standard method in the study of these problems. Indeed, the ideas behind the argument we present here extend even beyond the locally consistent setting, and will allow us to construct examples for the other two gaps as well.
	
	\begin{theorem}
		\label{thm-cc-unbdd-let}
		$\mathcal C_k$ and $\widetilde{\mathcal C_k}$ have unbounded lettericity.
	\end{theorem}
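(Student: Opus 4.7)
The plan is to argue by contradiction. Suppose $\lett(C_{k,l}) \leq t$ for some fixed $t$ and arbitrarily large $l$; fix a letter-graph expression $G(\mathcal D, w) \cong C_{k,l}$ with $|\Sigma| = t$. My first step is a Ramsey-type reduction to a ``monochromatic'' case. For each index $\nu \in [l]$, record the tuple of letters $(a_\nu^{(1)}, \dots, a_\nu^{(k)}) \in \Sigma^k$ assigned to $v_{1,\nu}, \dots, v_{k,\nu}$. By pigeonhole, some tuple $(a^{(1)}, \dots, a^{(k)})$ is shared by at least $l/t^k$ indices; call this set $N$. Restricting attention to the induced subgraph on $\{v_{i, \nu} : i \in [k], \nu \in N\}$ -- which is isomorphic to $C_{k, |N|}$ and, by Remark~\ref{rem-subgraph-implies-subword}, is still a letter graph over $\mathcal D$ -- every vertex of bag $A_i$ is now labelled $a^{(i)}$; choosing $l \geq 2t^k$ we have $|N| \geq 2$.

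For each consecutive pair of bags $(i, i+1)$, the arcs $(a^{(i)}, a^{(i+1)})$ and $(a^{(i+1)}, a^{(i)})$ fall in one of four configurations. If both arcs are present, or both absent (the symmetric cases), every pair $(v_{i, m}, v_{i+1, n})$ with $m, n \in N$ has the same adjacency irrespective of word order; but in $C_{k, l}$ such a pair is adjacent iff $m \leq n$, a non-uniform condition once $|N| \geq 2$, giving a contradiction. Hence every consecutive pair is in one of the two asymmetric configurations. Each such configuration forces a precise rule of the form ``$v_{i, m}$ precedes $v_{i+1, n}$ in the word iff $m \leq n$'' (Case c) or its reverse (Case d). A short transitivity argument -- inserting, for $m_1 < m_2$ in $N$, the auxiliary vertex $v_{i+1, m_1}$ between $v_{i, m_1}$ and $v_{i, m_2}$ -- shows that Case c pins down the word order within both $A_i$ and $A_{i+1}$ to match the bag-index order, while Case d pins it down to the reverse. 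Hence adjacent pairs cannot alternate between Case c and Case d across a shared bag, so a single case applies around the entire cycle. Applying that uniform rule to any fixed $\nu \in N$ all the way around yields the cyclic precedence $v_{1,\nu} \prec v_{2,\nu} \prec \cdots \prec v_{k,\nu} \prec v_{1,\nu}$ (or its reverse), impossible in a linear word.

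The main technical delicacy is handling the case where some of the $a^{(i)}$ coincide. Consecutive bags cannot share a letter: the single arc pair $(a, a)$ would give a uniform adjacency between bags $i$ and $i+1$, already ruled out above. Non-consecutive bags may share a letter for $k \geq 4$; in that situation the same pair of arcs governs two distinct adjacency relations in $C_{k,l}$ (between bag $i$ and bag $i+1$ on one hand, and between a different bag and bag $i+1$ on the other), producing conflicting orientation constraints on the shared middle bag. A case analysis shows that these constraints still cannot all be satisfied around the cycle, yielding the same contradiction. The analogous statement for $\widetilde{\mathcal C_k}$ then follows by dualising the arc analysis, i.e.\ by complementing the chain-graph relations between consecutive bags.
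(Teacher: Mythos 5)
Your proposal is correct and follows essentially the same route as the paper's proof: a pigeonhole argument reduces to a copy of $C_{k,|N|}$ represented with one letter per bag, after which one shows the decoder must be (up to symmetry) the cyclic one and derives the impossible cyclic precedence $v_{1,\nu} \prec \cdots \prec v_{k,\nu} \prec v_{1,\nu}$. The only real difference is that the paper dispenses with your ``main technical delicacy'' up front, by observing that one may assume without loss of generality that each letter occurs in only one bag (at the cost of a factor of $k$ in the alphabet size), which removes the coinciding-letter cases that you handle by a sketched direct case analysis.
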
 
	
	\begin{proof}
		Without loss of generality, we may restrict ourselves to letter graph expressions that use each letter in only one of the $k$ bags, since the minimum number of letters across all letter graph expressions is at most a factor of $k$ away from the minimum number of letters in expressions with this property. From our earlier discussion about complements, it suffices to prove the statement for $\mathcal C_k$. Suppose, for a contradiction, that the lettericity is bounded. If this is the case, then for any $t \in \mathbb N$, we can find $N \in \mathbb N$ such that $t$ cycles in some word representation of $C_{k, N}$ are represented by the same subword. In particular, those $t$ cycles form an induced $C_{k, t}$ using only $k$ letters. Since this can be done for any $t$, this shows the lettericity would in fact be bounded by at most $k$, and we should be able to express any $C_{k, i}$ with one letter per bag. Some quick case analysis shows the only way this would be possible is, up to symmetry, with a cyclic decoder $(a_i, a_{i + 1})$ (modulo $k$). However, it is impossible to represent even a single cycle $C_k$ in this way.
	\end{proof}
	
	\bigskip
	
	We are ready for the next step of our analysis. We will show that, in this restricted setting of chain circuits, the ``cycles-in-a-chain'' construction is the {\em only} obstacle to bounded lettericity.	Specifically, we show that in the universe of $k$-CCs, by forbidding any two chain circuits from $\mathcal C_k$ and $\widetilde{\mathcal C_k}$ respectively, we obtain a class of bounded lettericity.\footnote{Here, by ``forbidding chain circuits'' we mean that no copy of the underlying graphs appears as an induced subgraph {\em that respects the cyclic ordering of the bags}. We will skip the details.} Another way to state this is that $\mathcal C_k$ and $\widetilde{\mathcal C_k}$ are the only minimal classes of unbounded lettericity among $k$-CCs. Because of how those classes are defined, it suffices to show this in the case where we forbid $C_{k, i}$ and $\widetilde{C_{k, j}}$ for some $i, j \geq 1$. We will do the proof by induction on $i + j$. 
	
	In our proof, we will use a natural tool that we call the conflict graph:
	
	\begin{definition}
		Let $G$ be a $k$--chain circuit with CC-partition $\{A_i : 1 \leq i \leq k \}$. The \emph{conflict graph} $\conf(G)$ of $G$ is the directed graph with vertex set $V(G)$, and arcs $(v, w)$ whenever  
		\begin{itemize}
			\item $v \in A_i$, $w \in A_{i+1}$ for some $i$ (modulo $k$), and $\{v,w\} \in E(G)$, or
			\item $w \in A_i$, $v \in A_{i+1}$ for some $i$, and $\{v, w\} \notin E(G)$.
		\end{itemize} 
	\end{definition}
	
	The conflict graph of $G$ gives us a way of describing obstacles to representing $G$ with one letter per bag and a cyclic decoder. Specifically, conf$(G)$ has an arc from $v$ to $w$ exactly when the entry of $v$ is forced to appear before the entry of $w$ in such a representation. In particular, if such a representation of $G$ does exist, conf$(G)$ is acyclic.
	
	In fact, as one might expect, the converse is also true:
	
	\begin{lemma}
		\label{cycliciffdag}
		Suppose that $G$ is a $k$--chain circuit and that $\conf(G)$ is acyclic. Then there is a word $w$ on letters $a_1, \dots, a_k$ such that $G = G_{\mathcal D^c}(w)$, where $\mathcal D^c$ is the cyclic decoder $\{(a_i, a_{i+1}) : i \in \mathbb Z / k\mathbb Z\}$.
	\end{lemma}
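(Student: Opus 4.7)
The plan is to produce the desired word $w$ by reading off the vertices of $G$ in an order consistent with the acyclic structure of $\conf(G)$. Concretely, I will fix a topological ordering $v_1, v_2, \ldots, v_n$ of $V(G)$ with respect to $\conf(G)$ (which exists because $\conf(G)$ is a DAG) and define $w = w_1 w_2 \cdots w_n$ over $\{a_1, \ldots, a_k\}$ by setting $w_m = a_i$ whenever $v_m \in A_i$. The claim to verify is then that the bijection $m \mapsto v_m$ is an isomorphism between $G_{\mathcal D^c}(w)$ (whose vertex set is $[n]$) and $G$.

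The verification splits into cases according to the bags containing a pair $v_m, v_{m'}$. If they share a bag $A_i$, both positions of $w$ carry the letter $a_i$; since $\mathcal D^c$ has no self-loop at $a_i$, the pair is a non-edge of $G_{\mathcal D^c}(w)$, and since $A_i$ is independent, it is a non-edge of $G$ as well. If they lie in non-consecutive bags $A_i$ and $A_j$ (with $j \not\equiv i \pm 1 \pmod k$), then $\mathcal D^c$ contains no arc between $a_i$ and $a_j$ in either direction, so both graphs agree on this pair.

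The only substantive case is $v_m \in A_i$ and $v_{m'} \in A_{i+1}$. The decoder $\mathcal D^c$ contains the arc $(a_i, a_{i+1})$ but not its reverse, so by the decoding rule, $\{m, m'\}$ is an edge of $G_{\mathcal D^c}(w)$ exactly when $w_{\min(m,m')} = a_i$, which by our labelling is equivalent to $v_m$ preceding $v_{m'}$ in the topological order. By the very definition of $\conf(G)$, for such a cross-bag pair exactly one of the arcs $(v_m, v_{m'})$ and $(v_{m'}, v_m)$ belongs to $\conf(G)$, namely the former if $\{v_m, v_{m'}\} \in E(G)$ and the latter otherwise. Since a topological ordering respects arc directions, $v_m$ precedes $v_{m'}$ iff $(v_m, v_{m'}) \in \conf(G)$ iff $\{v_m, v_{m'}\} \in E(G)$, which closes the verification.

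There is no serious obstacle here: the conflict graph was engineered precisely so that its acyclicity encodes the consistency of the adjacency constraints on a would-be word, and the argument amounts to unpacking its definition alongside that of $\mathcal D^c$. The only mild care required is to check that the three cases cover all pairs uniformly, which is immediate from the $k$-chain circuit structure (bags are independent, and edges only appear between consecutive bags).
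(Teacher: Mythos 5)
Your proposal is correct and follows exactly the paper's argument: take a topological ordering of $\conf(G)$ and check that, read with the cyclic decoder, it represents $G$. The paper dismisses the verification as routine, whereas you spell out the (correct) case analysis, including the key observation that for a pair in consecutive bags exactly one arc of $\conf(G)$ is present, so the topological order determines adjacency both ways.
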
 
	
	\begin{proof}
		By standard results, conf$(G)$ admits a topological ordering $w$, i.e., a linear ordering of the vertices such that if $(x, y)$ is an arc, then $x$ comes before $y$ in $w$. It is routine to check that with a cyclic decoder, $w$ represents $G$.
	\end{proof}

	We are now ready to prove the result. We start with a base case for our induction.
	
	\begin{lemma} \label{basecase} 
		The subclass of $k$-chain circuits obtained by forbidding $C_{k, 1}$ and $\widetilde{C_{k, 1}}$ has bounded lettericity.
	\end{lemma}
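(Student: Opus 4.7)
The plan is to show that, for any $G$ in the described subclass, the conflict graph $\conf(G)$ is acyclic; by Lemma~\ref{cycliciffdag} this immediately gives $\lett(G)\leq k$, so the subclass has lettericity bounded by $k$. The task thus reduces to proving the contrapositive: any directed cycle in $\conf(G)$ forces $G$ to contain a cycle-respecting copy of $C_{k,1}$ or of $\widetilde{C_{k,1}}$.

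To extract such a copy, let $C$ be a shortest directed cycle in $\conf(G)$. The central structural claim is that $C$ has length exactly $k$, visits each bag of the chain circuit partition exactly once, and has all its arcs oriented in the same cyclic direction around the bag cycle. This claim suffices: if all arcs are ``forward'' then consecutive vertices of $C$ are adjacent in $G$, so the vertex set of $C$ induces a $k$-cycle, namely a $C_{k,1}$; whereas if all arcs are ``backward'' then consecutive vertices of $C$ (in the bag order) are non-adjacent in $G$, and since non-consecutive bags of a chain circuit have no edges between them, the vertices of $C$ form an independent set with one vertex per bag, that is, a $\widetilde{C_{k,1}}$. Both configurations are forbidden by hypothesis, giving the desired contradiction.

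It remains to establish the structural claim, and this is where the main obstacle lies. A length of at least $k$ is immediate, since the bag walk of $C$ must close up on the cycle of bag indices and each step moves between consecutive bags. The harder part is ruling out ``bouncing'' shortest cycles, meaning those whose bag walk turns around before making it all the way around. At any local extremum of such a walk, three consecutive vertices $v_{j-1}, v_j, v_{j+1}$ of $C$ lie with $v_{j-1}, v_{j+1}$ in one bag $A_p$ and $v_j$ in the adjacent bag $A_{p \pm 1}$; the $2K_2$-freeness of the chain graph between those two bags forces $v_{j-1}$ and $v_{j+1}$ to be strictly comparable in the CC ordering of $A_p$, and the consistent orderings across bags extend that comparison to their neighborhoods in the bag on the other side of $A_p$. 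A short case analysis on which bag contains $v_{j+2}$ then produces either a direct arc $v_{j-1} \to v_{j+2}$ in $\conf(G)$ or a symmetric shortcut, each of which yields a strictly shorter directed cycle and contradicts the minimality of $C$. The same style of shortcut argument also rules out cycles with winding $\pm n k$ for $n \geq 2$ (where by pigeonhole some bag is visited twice and the chain graph comparability between the two visits produces the shortcut), so $C$ must wind around exactly once with all arcs in the same direction, as required.
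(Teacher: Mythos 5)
Your proposal is correct and follows essentially the same route as the paper: reduce to acyclicity of the conflict graph via Lemma~\ref{cycliciffdag}, take a shortest directed cycle, and use the nestedness of neighbourhoods in the chain graphs between consecutive bags to produce shortcuts forcing the bag walk to wind around exactly once in a single direction, which yields an induced $C_{k,1}$ (all forward arcs are edges) or $\widetilde{C_{k,1}}$ (all backward arcs are non-edges). The only divergence is in how direction reversals of the bag walk are excluded: you use a local shortcut at each turning point (which does work, since the arc pattern there pins down the relative position of $v_{j-1}$ and $v_{j+1}$ in the CC-ordering of their common bag and hence their neighbourhoods on both sides, giving the arc $v_{j-1} \to v_{j+2}$ in either case for $v_{j+2}$), whereas the paper uses a global labelling/potential argument to show such walks cannot close up.
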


	\begin{proof}
		From Lemma~\ref{cycliciffdag}, it is enough to show that if $G$ avoids the two graphs, then its conflict graph is acyclic. Suppose not, and find a shortest directed cycle $v_1, \dots, v_t = v_1$, with $v_r \in A_{i_r}$ (i.e., the $i_r$ are the indices of the bags successively visited by the cycle). Let us note a few facts about the sequence $i_r$.
		
		\begin{enumerate}[label = (\roman*)]
			
			\item $i_{r + 1} - i_r = \pm 1 \mod k$ for $1 \leq r < t$. 
			
			\item \label{wlogswitchdirection} We may assume without loss of generality that $i_1 = 1$. Indeed, we can permute the labels of the bags cyclically to make sure that this is the case. We may also assume that $i_2 = 2$, since otherwise we can work in $\widetilde{G}$ instead. Indeed, conf$(\widetilde{G})$ is just conf$(G)$ with all arcs reversed, and $G$ avoids $C_{k, 1}$ and $\widetilde{C_{k, 1}}$ if and only if its CC-complement does.
			
			\item For any $j \in \mathbb{Z}/k\mathbb{Z}$, $j$ and $j + 1$ appear consecutively at most once (and similarly for $j + 1$ and $j$). Indeed, suppose the cycle visits bags $j$ and $j + 1$ in that order twice, through vertices $v \in A_j, w \in A_{j + 1}$ the first time, and $v' \in A_j$, $w' \in A_{j + 1}$ the second time. Since $G[A_1 \cup A_2]$ induces a chain graph, and we know $v \sim w$ and $v' \sim w'$ in conf$(G)$, we must have $v \sim w'$ or $v' \sim w$. In either case, we have a ``shortcut'' through our cycle, which shows it is not minimal, contrary to our assumption. An analogous argument shows the statement for $j + 1$ and $j$.
			
			\item \label{horizontalindices} For any $j \in \mathbb{Z}/k\mathbb{Z}$, $j$ and $j + 1$ appear consecutively, or $j + 1$ and $j$ do. Indeed, suppose that there is a $j$ such that the cycle has no edge between $A_j$ and $A_{j + 1}$. We may assume, after changing our choice and doing some relabelling if necessary, that $j = k$, and that the cycle does pass through $A_1$. Let $v_1$ be the leftmost vertex of the cycle in $A_1$. Label its position in $A_1$ by 0. Further, label by 0 the position of the leftmost neighbour $v_2 \in A_2$ of $v_1$. Proceeding similarly, label by 0 the position of the leftmost neighbour $v_i \in A_i$ of $v_{i-1}$, for $i \leq k$. Note that if $v \in A_i$ ($1 \leq i < k$) has a non-negative label, then so do all its neighbours in $A_{i +1}$ by construction, and if $v \in A_i$ ($1 < i \leq k$) has a non-negative label, then all its non-neighbours in $A_{i - 1}$ have (strictly) positive labels. This means that in our set-up, the cycle cannot actually return to $v_1$.       
			
		\end{enumerate}
		The above observations imply that the sequence of $i_r$ is (up to starting the cycle at another point, working with $\widetilde{G}$ instead of $G$ and relabelling bags if necessary) either:
		
		\begin{itemize}
			\item $1, 2, \dots, k, 1$. In this case, $G$ contains a $C_{k, 1}$ (or a $\widetilde{C_{k, 1}}$ if we were working with $\widetilde{G}$, as described in \ref{wlogswitchdirection}).
			\item $1, 2, \dots, k, 1, k, \dots, 2, 1$, i.e., our cycle goes around the chain circuit, but instead of reaching its starting point $v_1$, it reaches another vertex $v' \in A_1$ before looping back around. However, this is impossible: $v'$ must be to the right of $v_1$ (otherwise the neighbour of $v'$ preceding it in the cycle is also adjacent to $v_1$ and we can find a shorter cycle), and we can use the same indexing argument as in \ref{horizontalindices} to conclude that the cycle cannot return to $v_1$. 
		\end{itemize}

	\end{proof}
	
	\begin{remark}\label{rem:top}
		One cannot help but notice a vague and superficial similarity between the arguments of the above proof and certain exercises from basic homotopy theory, such as determining the winding number of loops. This is perhaps a stretch of the imagination, but it would be interesting to look into whether there is something underlying this similarity. If the reader is willing to suspend their disbelief, let us dream together for a moment: what if there is an illuminating topological setting which gives elegant, satisfying interpretations and proofs for all the phenomena we are attempting to formalise and explain? It is, of course, possible, that questions related to this in a not necessarily obvious way were already asked and answered -- if the reader happens to have any insight or interest in this matter, they are kindly invited to contact the authors. 
	\end{remark}
	
	\begin{theorem} \label{thm-cc-main}
		Among $k$-chain circuits, the classes $\mathcal C_k$ and $\widetilde{\mathcal C_k}$ are the only minimal classes of unbounded lettericity.
	\end{theorem}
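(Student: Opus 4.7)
The plan is to prove by induction on $i + j$ that, for any $i, j \geq 1$, the class of $k$-CCs avoiding $C_{k, i}$ and $\widetilde{C_{k, j}}$ (as induced sub-chain-circuits, respecting the cyclic bag ordering) has lettericity bounded by some function $B(k, i, j)$; combined with Theorem~\ref{thm-cc-unbdd-let}, this immediately yields the theorem. The base case $i + j = 2$, namely $i = j = 1$, is Lemma~\ref{basecase}. For the inductive step, assume the bound exists for all $(i', j')$ with $i' + j' < i + j$, and fix a $k$-CC $G$ avoiding $C_{k, i}$ and $\widetilde{C_{k, j}}$; without loss of generality $i \geq 2$.

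If $G$ additionally avoids $C_{k, i-1}$, then the induction hypothesis applied to $(i-1, j)$ bounds its lettericity by $B(k, i-1, j)$. Otherwise, the strategy is to produce a vertex set $W \subseteq V(G)$ of size depending only on $k, i, j$ such that $G - W$ avoids $C_{k, i-1}$. Specifically, let $H$ be a ``leftmost'' copy of $C_{k, i-1}$ in $G$, constructed greedily by selecting in each bag the $i-1$ leftmost vertices compatible with extending to a $C_{k, i-1}$; put $W := V(H)$, so that $|W| = k(i-1)$. Applying the induction hypothesis to $G - W$, together with the fact that the $|W|$ extracted vertices can be accommodated by fresh letters (cf.\ Remark~\ref{rem-subgraph-implies-subword}), yields $\lett(G) \leq B(k, i-1, j) + k(i-1)$, which closes the induction.

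The main obstacle is justifying the claim that $G - V(H)$ avoids $C_{k, i-1}$. The argument goes by contradiction: any copy $H'$ of $C_{k, i-1}$ disjoint from $H$ should, together with $H$, exhibit an ordered copy of $C_{k, 2(i-1)}$ --- and hence of $C_{k, i}$ since $i \geq 2$ --- in $G$, contradicting the assumption. The adjacencies between $V(H)$ and $V(H')$ across consecutive bags are controlled by the chain-graph thresholds, and these are in turn pinned down by the leftmost choice of $H$ together with the third bullet in the definition of a $k$-CC (the consistency of orderings between neighbouring bags). A careful indexing argument in the spirit of Lemma~\ref{basecase} should resolve all cases, perhaps after an initial reduction, e.g.\ to a minimal counterexample in which $H'$ lies entirely to the right of $H$ in every bag; subtleties with ``leftmost'' being non-canonical when copies overlap in some bags but not others may force a refinement of the greedy construction.
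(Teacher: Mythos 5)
Your inductive step rests on the claim that removing a single ``leftmost'' copy $H$ of $C_{k,i-1}$ (a set of only $k(i-1)$ vertices) leaves a $C_{k,i-1}$-free graph, on the grounds that any surviving copy $H'$ would combine with $H$ into a $C_{k,2(i-1)}$. This claim is false, and it is precisely where the real difficulty of the theorem lives. Two disjoint copies of $C_{k,i-1}$ in a chain circuit need not combine into any larger $C_{k,\ell}$: even if $H'$ lies strictly to the right of $H$ in every bag, the pattern $C_{k,2(i-1)}$ additionally requires every vertex of $H'$ in $A_m$ to be \emph{non-adjacent} to every vertex of $H$ in $A_{m+1}$, and nothing forces this. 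Concretely, for $i=2$ take two disjoint $k$-cycles $(x_1,\dots,x_k)$ and $(y_1,\dots,y_k)$ with $x_m, y_m \in A_m$, $y_m$ to the right of $x_m$, and with the single extra edge $y_1 x_2$; this is a valid chain circuit, it is $C_{k,2}$-free, yet deleting either cycle leaves the other, so no bounded deletion of one ``leftmost'' copy (nor any careful refinement of the greedy choice) can restore $C_{k,1}$-freeness. More generally, a $C_{k,i}$-free chain circuit can contain arbitrarily many vertices organised around several pairwise disjoint, spirally nested cycles, none of which is eliminated by removing a bounded set.

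The paper's proof shows what is actually needed in place of your deletion step. It builds a \emph{blue} cycle by iterating leftmost neighbours and a \emph{red} cycle by iterating rightmost neighbours, and splits $G$ into the part $G^L$ strictly left of the blue cycle, the part $G^R$ strictly right of the red cycle, and the (possibly large) middle $G^M$. The induction hypothesis applies to $G^L$ and $G^R$ by an argument close in spirit to yours --- but it works there only because the blue (resp.\ red) cycle is constructed from leftmost (resp.\ rightmost) neighbours, which pins down \emph{both} the required adjacencies and non-adjacencies to everything on one side, so a $C_{k,p-1}$ in $G^L$ genuinely extends to a $C_{k,p}$. The remaining, and largest, part of the proof is a separate analysis of $G^M$: one decomposes it along a spiral of successive ``rightmost'' windings $S_0, S_1, \dots$, shows that the number of windings is bounded in terms of $p$ (because alternate windings induce a $C_{k,r}$), and then letters the boundedly many inter-winding bags directly. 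Your proposal has no counterpart to this middle analysis, and the base case's indexing argument does not substitute for it.
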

	
	\begin{proof}
		
		We have shown the base case in Lemma~\ref{basecase}. We need to show how the induction step works. Suppose thus that we have a $k$-chain circuit $G$ with chain partition $\{A_1, ..., A_k\}$, and with no $C_{k, p}$ and $\widetilde{C_{k, q}}$ for some $p, q \in \mathbb N$. We are going to split our chain circuit in a way which allows us to use the inductive hypothesis. We may assume without loss of generality that $G$ has a $C_{k, 1}$ as an induced subgraph (if not, consider its CC-complement; if its CC-complement also has no $C_{k, 1}$, then we are in the base case). Starting with any vertex $v_i \in A_i$ of the cycle, we colour the edge to its leftmost neighbour in $A_{i + 1}$ blue. We repeat this process with that leftmost neighbour, and keep doing this until we reach a vertex we have visited before. The process terminates, since any visited vertex in $A_i$ has a neighbour in $A_{i+1}$ (specifically, the appropriate vertex in the cycle we started with). We thus obtain a blue cycle $C_b$. Similarly, construct a red cycle $C_r$ by starting with $v_i$ and colouring in red the edge to its rightmost neighbour in $A_{i-1}$, and like before, repeating the process. See Figure~\ref{fig-red-blue-cycle} for an illustration.
		
		\begin{figure}[ht]
			\centering
			\begin{tikzpicture}[scale = 0.5, transform shape]
				

				\foreach \i in {-1,...,3} {
					\foreach \x in {-11,...,10} {
						\foreach \y in {\x, ..., 10} {
							\draw[lightgray] (\y, 3 * \i) -- (\x, 3 * \i + 3);
						}
					}
				}
				
				\filldraw[white] (-12.1, -3.1) rectangle ++(24.2, 2.3);
				\filldraw[white] (-12.1, 12.1) rectangle ++(24.2, -2.3);
				
				\begin{scope}
					\clip (-12.1, -3.1) rectangle ++(24.2, 2.3);
					\draw (-0.6,9.8) edge [bend right = 170, very thick, looseness = 2] (-0.6,-0.8);
					
					\draw (-9.1,9.8) edge [blue, bend right = 170, very thick, looseness = 2] (-9.1,-0.8);
					
					\draw (-2.9,9.8) edge [blue, out = 90, in = -110, very thick, looseness = 2] (-2.3,-0.8);
					
					\draw (-6.9,9.8) edge [blue, out = 90, in = -110, very thick, looseness = 2] (-6.3,-0.8);
					
					\draw (7.9,9.8) edge [red, bend right = 170, very thick, looseness = 2] (7.9,-0.8);
					
					\draw (1.1,9.8) edge [red, out = 90, in = -110, very thick, looseness = 2] (1.7,-0.8);
					
					\draw (5.1,9.8) edge [red, out = 90, in = -110, very thick, looseness = 2] (5.7,-0.8);
				\end{scope}
				
				\begin{scope}
					\clip (-12.1, 12.1) rectangle ++(24.2, -2.3);
					\draw (-0.6,9.8) edge [bend right = 170, very thick, looseness = 2] (-0.6,-0.8);
					
					\draw (-9.1,9.8) edge [blue, bend right = 170, very thick, looseness = 2] (-9.1,-0.8);
					
					\draw (-2.9,9.8) edge [blue, out = 90, in = -110, very thick, looseness = 2] (-2.3,-0.8);
					
					\draw (-6.9,9.8) edge [blue, out = 90, in = -110, very thick, looseness = 2] (-6.3,-0.8);
					
					\draw (7.9,9.8) edge [red, bend right = 170, very thick, looseness = 2] (7.9,-0.8);
					
					\draw (1.1,9.8) edge [red, out = 90, in = -110, very thick, looseness = 2] (1.7,-0.8);
					
					\draw (5.1,9.8) edge [red, out = 90, in = -110, very thick, looseness = 2] (5.7,-0.8);
				\end{scope}
				
				\begin{scope}
					\clip (-12.1, -0.8) rectangle ++(24.2, 10.4);
					\draw (-0.6,9.8) edge [bend right = 170, very thick, opacity = 0.3, looseness = 2, dashed] (-0.6,-0.8);
					
					\draw (-9.1,9.8) edge [blue, bend right = 170, very thick, opacity = 0.3, looseness = 2, dashed] (-9.1,-0.8);
					
					\draw (-2.9,9.8) edge [blue, out = 90, in = -110, very thick, opacity = 0.3, looseness = 2, dashed] (-2.3,-0.8);
					
					\draw (-6.9,9.8) edge [blue, out = 90, in = -110, very thick, opacity = 0.3, looseness = 2, dashed] (-6.3,-0.8);
					
					\draw (7.9,9.8) edge [red, bend right = 170, very thick, opacity = 0.3, looseness = 2, dashed] (7.9,-0.8);
					
					\draw (1.1,9.8) edge [red, out = 90, in = -110, very thick, opacity = 0.3, looseness = 2, dashed] (1.7,-0.8);
					
					\draw (5.1,9.8) edge [red, out = 90, in = -110, very thick, opacity = 0.3, looseness = 2, dashed] (5.7,-0.8);

				\end{scope}
				
				\draw[blue, very thick] (-0.5, 4.5) -- (-1, 3);
				\draw[blue, very thick] (-1, 3) -- (-2, 0);
				\draw[blue, very thick] (-3, 9) -- (-4, 6);
				\draw[blue, very thick] (-4, 6) -- (-4.25, 5.25);
				\draw[blue, very thick, loosely dotted] (-4.25, 5.25) -- (-4.75, 3.75);
				\draw[blue, very thick] (-4.75, 3.75) -- (-5, 3);
				\draw[blue, very thick] (-5, 3) -- (-6, 0);
				\draw[blue, very thick] (-7, 9) -- (-8, 6);
				\draw[blue, very thick] (-8, 6) -- (-9, 3);
				
				\draw[blue, very thick] (-9, 9) -- (-9, 0);
				\draw (-9, 9) edge[blue, very thick, out = 90, in = -80] (-9.1, 9.8);
				\draw (-9, 0) edge[blue, very thick, out = -90, in = 80] (-9.1, -0.8);
				
				\draw (-2, 0) edge[blue, very thick, in = 70, looseness = 0.1] (-2.3, -0.8);
				\draw (-3, 9) edge[blue, very thick, out = 80, in = 90, looseness = 0.1] (-2.9, 9.8);
				
				\draw (-6, 0) edge[blue, very thick, in = 70, looseness = 0.1] (-6.3, -0.8);
				\draw (-7, 9) edge[blue, very thick, out = 80, in = 90, looseness = 0.1] (-6.9, 9.8);
				
				\draw[red, very thick] (-0.5, 4.5) -- (0, 6);
				\draw[red, very thick] (0, 6) -- (1, 9);
				\draw[red, very thick] (2, 0) -- (3, 3);
				\draw[red, very thick] (3, 3) -- (3.25, 3.75);
				\draw[red, very thick, loosely dotted] (3.25, 3.75) -- (3.75, 5.25);
				\draw[red, very thick] (3.75, 5.25) -- (4, 6);
				\draw[red, very thick] (4, 6) -- (5, 9);
				\draw[red, very thick] (6, 0) -- (7, 3);
				\draw[red, very thick] (7, 3) -- (8, 6);
				
				\draw[red, very thick] (8, 9) -- (8, 0);
				\draw (8, 9) edge[red, very thick, out = 90, in = -80] (7.9, 9.8);
				\draw (8, 0) edge[red, very thick, out = -90, in = 80] (7.9, -0.8);
				
				\draw (2, 0) edge[red, very thick, in = 70, looseness = 0.1] (1.7, -0.8);
				\draw (1, 9) edge[red, very thick, out = 80, in = 90, looseness = 0.1] (1.1, 9.8);
				
				\draw (6, 0) edge[red, very thick, in = 70, looseness = 0.1] (5.7, -0.8);
				\draw (5, 9) edge[red, very thick, out = 80, in = 90, looseness = 0.1] (5.1, 9.8);
				
				\draw[very thick] (-0.5, 9) -- (-0.5, 0);
				\draw (-0.5, 9) edge[very thick, out = 90, in = -80] (-0.6, 9.8);
				\draw (-0.5, 0) edge[very thick, out = -90, in = 80] (-0.6, -0.8);
				
			\end{tikzpicture}

			\caption{The red and blue spirals}
			\label{fig-red-blue-cycle}
		\end{figure}
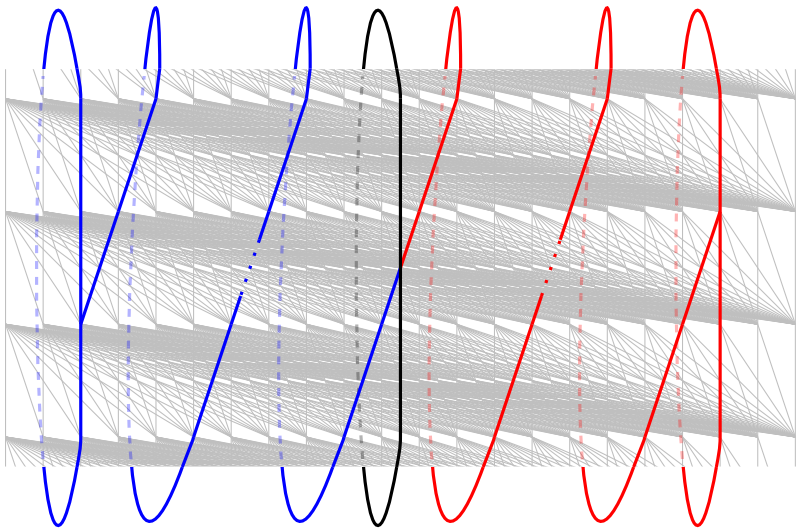
		
		Now let $G^L$ be the induced subgraph strictly to the left of the blue cycle, let $G^R$ be the subgraph strictly to the right of the red cycle, and let $G^M$ be the middle subgraph (including the red and blue cycles). That is, for each $A_i$, $G^L$ contains the vertices of $A_i$ strictly to the left of the vertex in $C_b \cap A_i$, $G^R$ contains the vertices strictly to the right of the vertex in $C_r \cap A_i$, and $G^M$ contains the remaining vertices. Write $G^L_i := G^L \cap A_i$ (extending the notation to $G^R$ and $G^M$ in the obvious way).
		
		Now notice that by construction, each of $G^L$ and $G^R$ is $C_{k, p - 1}$-free, since a $C_{k, p - 1}$ in $G^L$ together with the blue cycle would give a $C_{k, p}$, and similarly for a $C_{k, p - 1}$ in $G^R$ together with the red cycle. Hence the inductive hypothesis applies, and there is a $c$ (depending on $p$) such that $G^L$ and $G^R$ can each be represented by a word with $c$ letters. 
		
		Moreover, the edges between the three parts are easy to describe: for any $i$, we have no edges between $G^R_i$ and $G^M_{i + 1} \cup G^L_{i + 1}$, because of how we constructed our red cycle, and no edges between $G^L_i$ and $G^M_{i - 1} \cup G^R_{i - 1}$ because of how we constructed the blue cycle. We also have all possible edges between $G^L_i$ and $G^M_{i + 1} \cup G^R_{i+1}$, as well as all possible edges between $G^R_i$ and $G^M_{i - 1} \cup G^L_{i - 1}$ because of the properties of chain circuits. Given these structural features, it is clear that if we have words representing each of $G^L$, $G^M$ and $G^R$ with $c_1, c_2$ and $c_3$ letters respectively, we can construct a word representing $G$ using $c_1 + c_2 + c_3$ letters with a carefully chosen decoder.   
		
		To prove the theorem, it remains to show that we can express $G^M$ using a number of letters that only depends on $p$. Before we do this, note that, although the blue and red cycles are not uniquely defined (we might get different red and blue cycles if we choose a different starting cycle), the partition we get at the end into $G^L, G^M$ and $G^R$ still satisfies the properties we have described so far. In particular, we may assume without loss of generality that the cycle we start with is given by vertices $v_1, \dots, v_k$ with $v_i \in A_i$, such that $v_{i+1}$ is the leftmost neighbour of $v_i$ for $i$ modulo $k$ (in other words, it {\em is} the blue cycle). 
		
		Now start with $v_k \in C_b$ and consider the sequence of vertices that we get by repeatedly taking the rightmost vertex in the previous bag modulo $k$. Label the vertices of this sequence as $v_{i, j}$, where $i$ is the bag of the vertex, and $j$ counts the number of times the sequence has visited that bag after this vertex (so $v_k$ becomes $v_{k, 1}$, and the sequence continues with $v_{k-1, 1}, \dots, v_{1, 1}, v_{k, 2}, \dots$). One can picture this sequence as a spiral winding around the chain circuit, ending at the red cycle. Note that this spiral is indeed  winding ``to the right'', more precisely, for all $i$, if $j_2 > j_1$, $v_{i, j_2}$ is to the right of $v_{i, j_1}$ in the usual CC-ordering (or the two vertices are equal). To see this, observe that if a vertex of the sequence is to the right of a cycle (i.e., to the right of the vertex of the cycle lying in the corresponding bag), so are all following vertices.
		
		Put $S_0 := C_b$, and let $S_j$ ($j > 0$) be the cycles induced by vertices $v_{i, j}$ (note that those are indeed cycles, since the chain circuit property together with the above observation implies that $v_{k, j}$ is adjacent to $v_{1, j}$). Write $G^M_j$ for the set of vertices strictly between $S_j$ and $S_{j + 1}$. Finally, write $A_{i, j}$ for $A_i \cap G^M_j$. 	
		
		The following statements hold:
		
		\begin{itemize}
			
			\item $V(G^M) = \bigcup\limits_{\substack{1 \leq i \leq k \\ j \geq 0}} A_{i, j} \cup \bigcup\limits_{j \geq 0} S_j$.
			
			\item The number of disjoint $S_j$ is bounded as a function of $p$. To see why, assume that $S_1, S_2, \dots, S_{2r + 1}$ are all disjoint for some $r \in \mathbb N$. One can check that, by construction, $S_1, S_3, \dots, S_{2r - 1}, S_{2r + 1}$ induce a $C_{k, r}$. As $C_{k, p}$ is forbidden, it follows that $r < p$. Since the sequence $(v_{i, j})$ becomes periodic once it repeats a vertex, it follows that $$|\{v_{i, j} : i \geq 1, j \geq 0\}| < k(2p + 1).$$  
			
		\end{itemize}
		
		From \cite{letter-graphs}, we know that if the lettericity of a graph is $l$, then adding a vertex produces a graph of lettericity at most $2l + 1$. In view of the second statement above, $\bigcup\limits_{j \geq 0} S_j$ has size at most $k(2p + 2)$, so it suffices to show that $G^M \setminus \bigcup\limits_{j \geq 0} S_j$ can be expressed using a bounded number of letters. We claim that this can be done using one letter $a_{i, j}$ for each non-empty set $A_{i, j}$. This is enough, since from the above discussion, $|\{ (i, j) : A_{i, j}\neq \emptyset \}| < k(2p + 2)$. To see how one can construct a word and decoder representing $G^M \setminus \bigcup\limits_{j \geq 0} S_j$, let us  examine the edges between those sets. Let $A_{i_1, j_1}$ and $A_{i_2, j_2}$ be two such sets. If $i_1 - i_2 \neq \pm 1 \mod k$, then there are no edges between $A_{i_1, j_1}$ and $A_{i_2, j_2}$, so write $i$ for $i_1$ and assume without loss of generality that $i_2 = i + 1 \mod k$. All of the following claims follow straightforwardly from our construction of the $A_{i, j}$ and the properties of chain circuits.
		
		\begin{itemize}
			\item If $i \neq k$, we have:
			\begin{itemize}
				\item No edges between $A_{i, j_1}$ and $A_{i + 1, j_2}$ when $j_2 < j_1$.
				\item All possible edges $A_{i, j_1}$ and $A_{i + 1, j_2}$ when $j_2 > j_1$
			\end{itemize} 
			
			\item For $i = k$, we have:
			
			\begin{itemize}
				\item No edges between $A_{k, j_1}$ and $A_{1, j_2}$ when $j_2 < j_1 - 1$
				\item All possible edges between $A_{k, j_1}$ and $A_{1, j_2}$ when $j_2 \geq j_1$. 
			\end{itemize}
			
		\end{itemize}
		
		The key is now to notice that all of the non-trivial adjacencies appear between consecutive bags in the following sequence that ``spirals around'' the chain circuit (some of the bags might be empty):
		$$A_{k, 0}, A_{k - 1, 0}, \dots, A_{1, 0}, A_{k, 1}, A_{k-1, 1}, \dots, A_{1, 1}, A_{k, 2}, \dots.$$
		
		One helpful way of conceptualising this is by thinking of the red edges as ``impermeable'' to edges crossing them from top right to bottom left, and to non-edges crossing them from top left to bottom right.
		
		Any two consecutive bags in the above sequence induce chain graphs. As described in \cite{3let}, we can realise the subgraph consisting of the edges between consecutive bags by using one letter $a_{i, j}$ for each bag $A_{i, j}$; the decoder then contains the (ordered) pairs of letters corresponding to consecutive bags. The construction of the word can be done inductively: if we have a word describing the subgraph up to a certain bag $A_{i, j}$ in the sequence, we can add the letters corresponding to vertices from the next bag by placing them carefully among the letters of bag $A_{i, j}$ (see \cite{3let} for more details). To make the word represent $G^M \setminus \bigcup\limits_{j \geq 0} S_j$, all we need to do is add pairs $(a_{i_1, j_1}, a_{i_2, j_2})$ and $(a_{i_2, j_2}, a_{i_1, j_1})$ to the decoder whenever the corresponding bags have all possible edges between them.
		
	\end{proof}
	
	Theorem~\ref{thm-cc-main} gives us an answer to Problems~\ref{prob:bddlet}~and~\ref{lambda-let} in a restricted setting. Moreover, it does so in a fairly constructive way: suppose we were working within the monotone grid class of a PMM with cyclic cell graph. Then, given a gridded permutation $\pi$, we could, in principle, carefully follow the above proof and add a bounded number of lines to the gridding to make it geometric. We believe that, in fact, this ``chains-in-a-cycle'' construction and its complements are the only obstacles to bounded lettericity:
	
	\begin{conjecture}
		If $\mathcal X$ is a class of bounded $\lambda$ and unbounded lettericity, then $\mathcal X$ contains the class $\mathcal C_k$ for some $k$ (or one of its complements).
	\end{conjecture}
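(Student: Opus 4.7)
The plan is to generalize the argument of Theorem~\ref{thm-cc-main} beyond the restricted setting of $k$-chain circuits, by a Ramsey-type reduction: one first uniformizes the bag ``pattern'' of a bounded-$\lambda$ class, and then classifies which uniform patterns force unbounded lettericity, showing that all such obstructive patterns contain an induced $\mathcal C_k$ (or one of its complements).

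Concretely, let $t := \lambda(\mathcal X)$ and take a sequence $G_1, G_2, \ldots \in \mathcal X$ with $\lett(G_n) \to \infty$. Each $G_n$ admits a locally consistent chain partition into $k_n \le t$ totally ordered homogeneous bags. For each ordered pair of bags, record the adjacency type (empty, complete, or properly ordered chain graph) and, in the chain case, the orientation data (which bag is increasing and which decreasing in the neighbourhood ordering). There are only finitely many such combinatorial patterns on at most $t$ bags, so by pigeonhole we may pass to a subsequence on which some fixed pattern $\Pi$ on $k\le t$ bags appears. Encode $\Pi$ as a labelled graph $H$ on $[k]$ whose edges are the bag pairs with a non-trivial chain graph, carrying the orientation labels.

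We would then classify patterns into \emph{trivial} and \emph{non-trivial} according to whether the orientation data admits a globally consistent linear ordering in the sense of Theorem~\ref{thm:k-letter}. Informally, $\Pi$ is trivial if one can assign a sign $s_i \in \{\pm 1\}$ to each bag such that along every cycle in $H$ the orientations ``cancel out''; this is the analogue for $\lambda$-to-lettericity of the cell graph being a forest in \cite{geometric}. The main claim to establish is: if $\Pi$ is trivial, then all graphs realizing $\Pi$ admit a uniformly bounded letter graph representation, assembled bag-by-bag using a direct generalization of the construction at the end of the proof of Theorem~\ref{thm-cc-main} (placing letters for each bag one after another, using the compatible signs to align the orderings). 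This would contradict $\lett(G_n) \to \infty$, so $\Pi$ must be non-trivial, i.e., $H$ must contain a cycle whose orientations do not cancel.

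From a non-trivial orientation cycle of length $\ell$ in $H$ we finally extract an induced $C_{\ell, N}$ (or complement) for arbitrary $N$. The cycle singles out a sequence of $\ell$ bags which, after complementing appropriate homogeneous sets and bipartite adjacencies to normalize orientations, stand in the exact chain-circuit relationship of Definition above the notation $C_{k,l}$: each bag is decreasing in the next and increasing in the previous. Since $|V(G_n)| \to \infty$ while the number of bags is bounded, the bags along the cycle grow unboundedly, and a pigeonhole/Ramsey argument inside these bags produces a nested sequence of $\ell$-cycles forming an induced $C_{\ell, N}$ for arbitrarily large $N$. The hard part is the trivial case: formulating the right ``sign compatibility'' condition on $\Pi$ and rigorously carrying out the letter graph construction for a general trivial pattern rather than the simple cyclic shape of Theorem~\ref{thm-cc-main}, while also bookkeeping the complementations that give rise to the ``or one of its complements'' clause in the statement.
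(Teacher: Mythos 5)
This statement is an open conjecture in the paper, not a theorem: no proof is given, and the surrounding discussion is devoted precisely to explaining why the natural generalisation of Theorem~\ref{thm-cc-main} that you propose does not go through. Your central dichotomy --- classifying the bag pattern $\Pi$ as ``trivial'' (sign-compatible) or ``non-trivial'' and claiming that trivial patterns force bounded lettericity --- is pitched at the wrong level of the hierarchy. Sign compatibility of the orientation data is the obstruction separating bounded $\sigma$ from bounded $\lambda$ (monotone griddability versus griddability by a partial multiplication matrix); since $\mathcal X$ is assumed to have bounded $\lambda$, that hurdle is already cleared. The class $\mathcal C_k$ itself is the counterexample to your main claim: every $k$-chain circuit realises a single, perfectly sign-consistent pattern (each bag decreasing into the next, increasing into the previous), yet the class has unbounded lettericity by Theorem~\ref{thm-cc-unbdd-let}. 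Whether a graph with a fixed pattern admits a globally consistent order in the sense of Theorem~\ref{thm:k-letter} depends on the actual chain graphs between the bags (equivalently, on directed cycles in the conflict graph), not on the pattern, so no pigeonhole on finitely many patterns can decide it.

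Even if you replace the pattern dichotomy by the correct obstruction (arbitrarily long directed cycles in the conflict graph), the final extraction of an induced $C_{\ell,N}$ is exactly the open difficulty, not a routine Ramsey step. The ``figure~8'' obstacle of Figure~\ref{fig-obstacle-8} exhibits long conflict cycles that wind around two different simple cycles of the bag-interaction graph; the resulting cycles-in-a-chain structure is \emph{not} induced, because the two visits to the shared bag are complete to each other rather than independent, so it is not immediately a copy of some $C_{k,l}$ or a complement thereof. One would have to show that such configurations nevertheless always contain, or can be cleaned into, a genuine $\mathcal C_k$ (for a possibly different $k$) or one of its complements, and the paper explicitly raises the possibility of aperiodic windings for which this is unclear. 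Also note that the number of bags in a locally consistent partition is not bounded by $\lambda(\mathcal X)$ applied once and for all: $\lambda$ bounds the number of bags per graph, but the pigeonhole over ``patterns on at most $t$ bags'' is fine only after you fix, for each $G_n$, one optimal partition --- a minor point compared with the two above, but worth stating. As it stands, the proposal restates the conjecture's intended proof strategy while assuming away both of its genuinely hard steps.
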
  

	So where exactly is the difficulty in generalising the result? Returning to our permutation setting, the problem is that, as soon as the cell graph contains more than one cycle, things go awry.\footnote{On par with Remark~\ref{rem:top}, we note that the difficulty of the problem depends on the topology of the cell graph.} To demonstrate, let us consider a specific setting. The easiest unsolved instance of the problem occurs in the monotone grid class of the matrix $\begin{pmatrix} 1 & 1 & 0 \\ 1 & 1 & 1 \\ 0 & 1 & 1\end{pmatrix}.$ Identifying all obstacles to to geometric griddability in this class will be a big step forward towards our goal.\footnote{The authors are optimistic that it is only a matter of time before progress is made in this direction; either we will manage to show that forbidding the cycles-in-a-chain obstacles is enough to guarantee geometric griddability in this class, or we will find another type of obstacle.} There are two ``simple cycles'' in the cell graph of the matrix: the one given by the four entries in the top left, and the one given by the four entries in the bottom right. Call them $A$ and $B$ respectively. We may use (the constructive version of) Theorem~\ref{thm-cc-main} as a black box: if we forbid $C_{4, p}$ and complements ($p \in \mathbb N$), we are able to ``clean out'' all circuits that lie entirely in one of $A$ or $B$. Any other obstacle to geometric griddability must somehow involve both $A$ and $B$.
	
	So what kinds of obstacles are there that can involve both $A$ and $B$? It turns out that our cycles-in-a-chain make another appearance here. Imagine working in a ``virtual'' chain circuit that starts in the middle cell of the matrix, goes around $A$ once, then around $B$; the middle cell thus occurs twice as a bag, but we treat those occurrences as disjoint copies. One may then construct new cycles-in-a-chain structures such as the one depicted in Figure~\ref{fig-obstacle-8}: we go around cycle $A$ (represented by red arcs), arrive to the left of where we started, then go around cycle $B$ (represented by blue arcs) to get back to the start. In order to eliminate those structures, we may use Theorem~\ref{thm-cc-main} applied to this virtual chain circuit, provided $C_{8, p}$ is forbidden (with complements).

	
	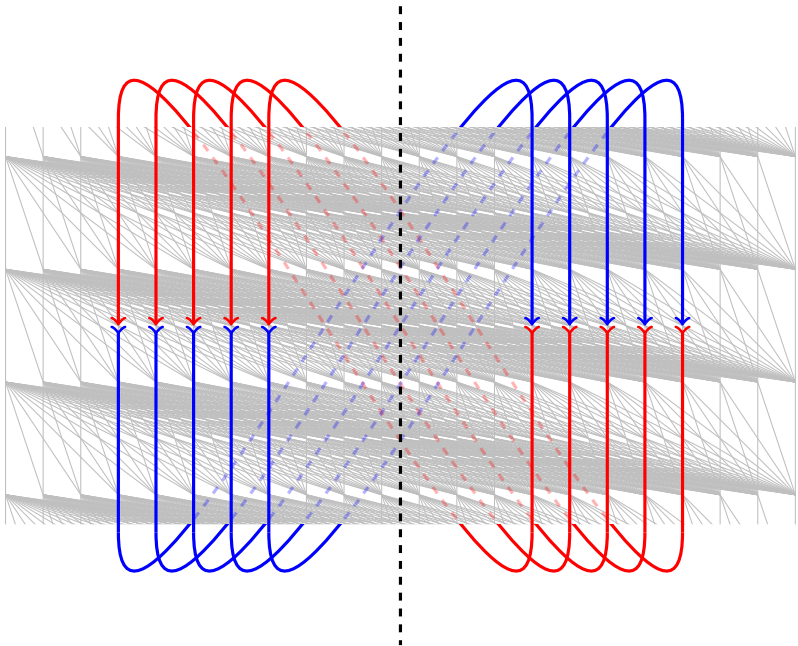
\begin{figure}[ht]
		\centering
		\begin{tikzpicture}[scale = 0.5, transform shape]
			

			\foreach \i in {-1,...,3} {
				\foreach \x in {-11,...,10} {
					\foreach \y in {\x, ..., 10} {
						\draw[lightgray] (\y, 3 * \i) -- (\x, 3 * \i + 3);
					}
				}
			}
			
			\filldraw[white] (-12.1, -3.1) rectangle ++(24.2, 2.3);
			\filldraw[white] (-12.1, 12.1) rectangle ++(24.2, -2.3);
			
			\begin{scope}
				\clip (-12.1, -3.1) rectangle ++(24.2, 2.3);
				
				\draw (3, -1) edge[red, very thick, out = -90, in = 90] (-8, 10);
				\draw (4, -1) edge[red, very thick, out = -90, in = 90] (-7, 10);
				\draw (5, -1) edge[red, very thick, out = -90, in = 90] (-6, 10);
				\draw (6, -1) edge[red, very thick, out = -90, in = 90] (-5, 10);
				\draw (7, -1) edge[red, very thick, out = -90, in = 90] (-4, 10);
				
				\draw (-8, -1) edge[blue, very thick, out = -90, in = 90] (3, 10);
				\draw (-7, -1) edge[blue, very thick, out = -90, in = 90] (4, 10);
				\draw (-6, -1) edge[blue, very thick, out = -90, in = 90] (5, 10);
				\draw (-5, -1) edge[blue, very thick, out = -90, in = 90] (6, 10);
				\draw (-4, -1) edge[blue, very thick, out = -90, in = 90] (7, 10);
				
			\end{scope}
			
			\begin{scope}
				\clip (-12.1, 12.1) rectangle ++(24.2, -2.3);
				
				\draw (3, -1) edge[red, very thick, out = -90, in = 90] (-8, 10);
				\draw (4, -1) edge[red, very thick, out = -90, in = 90] (-7, 10);
				\draw (5, -1) edge[red, very thick, out = -90, in = 90] (-6, 10);
				\draw (6, -1) edge[red, very thick, out = -90, in = 90] (-5, 10);
				\draw (7, -1) edge[red, very thick, out = -90, in = 90] (-4, 10);
				
				\draw (-8, -1) edge[blue, very thick, out = -90, in = 90] (3, 10);
				\draw (-7, -1) edge[blue, very thick, out = -90, in = 90] (4, 10);
				\draw (-6, -1) edge[blue, very thick, out = -90, in = 90] (5, 10);
				\draw (-5, -1) edge[blue, very thick, out = -90, in = 90] (6, 10);
				\draw (-4, -1) edge[blue, very thick, out = -90, in = 90] (7, 10);
				
			\end{scope}
			
			\begin{scope}
				\clip (-12.1, -0.8) rectangle ++(24.2, 10.4);
				
				\draw (3, -1) edge[red, very thick, out = -90, in = 90, opacity = 0.3, dashed] (-8, 10);
				\draw (4, -1) edge[red, very thick, out = -90, in = 90, opacity = 0.3, dashed] (-7, 10);
				\draw (5, -1) edge[red, very thick, out = -90, in = 90, opacity = 0.3, dashed] (-6, 10);
				\draw (6, -1) edge[red, very thick, out = -90, in = 90, opacity = 0.3, dashed] (-5, 10);
				\draw (7, -1) edge[red, very thick, out = -90, in = 90, opacity = 0.3, dashed] (-4, 10);
				
				\draw (-8, -1) edge[blue, very thick, out = -90, in = 90, opacity = 0.3, dashed] (3, 10);
				\draw (-7, -1) edge[blue, very thick, out = -90, in = 90, opacity = 0.3, dashed] (4, 10);
				\draw (-6, -1) edge[blue, very thick, out = -90, in = 90, opacity = 0.3, dashed] (5, 10);
				\draw (-5, -1) edge[blue, very thick, out = -90, in = 90, opacity = 0.3, dashed] (6, 10);
				\draw (-4, -1) edge[blue, very thick, out = -90, in = 90, opacity = 0.3, dashed] (7, 10);
				
			\end{scope}
			
			
			\draw[>-, blue, very thick] (-4, 4.5) -- (-4, -1);
			\draw[>-, blue, very thick] (-5, 4.5) -- (-5, -1); 
			\draw[>-, blue, very thick] (-6, 4.5) -- (-6, -1); 
			\draw[>-, blue, very thick] (-7, 4.5) -- (-7, -1); 
			\draw[>-, blue, very thick] (-8, 4.5) -- (-8, -1); 
			
			\draw[<-, red, very thick] (-4, 4.5) -- (-4, 10);
			\draw[<-, red, very thick] (-5, 4.5) -- (-5, 10);
			\draw[<-, red, very thick] (-6, 4.5) -- (-6, 10);
			\draw[<-, red, very thick] (-7, 4.5) -- (-7, 10);
			\draw[<-, red, very thick] (-8, 4.5) -- (-8, 10);
			
			\draw[<-, blue, very thick] (3, 4.5) -- (3, 10);
			\draw[<-, blue, very thick] (4, 4.5) -- (4, 10);
			\draw[<-, blue, very thick] (5, 4.5) -- (5, 10);
			\draw[<-, blue, very thick] (6, 4.5) -- (6, 10);
			\draw[<-, blue, very thick] (7, 4.5) -- (7, 10);
			
			\draw[>-, red, very thick] (3, 4.5) -- (3, -1);
			\draw[>-, red, very thick] (4, 4.5) -- (4, -1);
			\draw[>-, red, very thick] (5, 4.5) -- (5, -1);
			\draw[>-, red, very thick] (6, 4.5) -- (6, -1);
			\draw[>-, red, very thick] (7, 4.5) -- (7, -1);

			\draw[very thick, dashed] (-0.5, 13) -- (-0.5, -4);
			
		\end{tikzpicture}

		\caption{A ``figure 8'' obstacle}
		\label{fig-obstacle-8}
	\end{figure}
	
	
	We note that this kind of obstacle is still, in some sense, periodic. What makes the problem messier at this stage is the possibility that aperiodic obstacles exist. One can think of them by analogy with the aperiodic fundamental antichains described in \cite[Section~5]{murphy-profile}. The general idea of the construction presented there is that we produce an aperiodic word on $A$ and $B$, and go around the cycles in the order indicated by the word.\footnote{The construction in \cite{murphy-profile} is slightly more involved than that, but the details do not matter for this discussion.} Since geometrically griddable classes are wqo, we had better be able to destroy all but a finite number of the elements in such an antichain by forbidding the appropriate graphs. Could it be that those graphs are just our old cycles-in-a-chain, or is some fundamentally different kind of obstacle hiding somewhere in there?
	
	\medskip
	
	Let us try to start answering that question. Sticking with the above example, consider a walk in the conflict graph.\footnote{We note that it makes sense to define the conflict graph in any locally consistent setting.} When we start in a bag and go around one of the simple cycles in the cell graph of the matrix, we end up somewhere in the original bag. By applying (the constructive version of) Theorem~\ref{thm-cc-main} on the (boundedly many) simple cycles, we gain some control of where we end up: for instance, by construction, we may not return to where we started by just walking repeatedly around the same simple cycle in the original cell graph. However, as in Figure~\ref{fig-obstacle-8}, we might be able to return to the start (and thus produce a circuit in the conflict graph) by walking around different cycles of the cell graph in succession. This means that, in order to eliminate all circuits, we need to apply Theorem~\ref{thm-cc-main} to the ``virtual chain circuits'' coming from certain concatenations of simple cycles in the line graph.\footnote{Additionally, we need to distinguish between the two possible directions in the simple cycles.} If we manage to show that it suffices to do it only for a bounded number of such concatenations, we are done. 
	
	One way to go about this would be by stating and proving some kind of Ramsey-type result: if our (not necessarily chordless) circuit in the original conflict graph is long enough, the sequence of cells visited is going to have some repeated subsequences. We would want to use this in order to find a certain structure that would have been destroyed by our bounded number of applications of Theorem~\ref{thm-cc-main}.\footnote{In fact, this is how we originally found the example from Figure~17: a certain number of ``backwards paths'' like the red ones from the figure guarantees by simple arguments that certain configurations of their endpoints are unavoidable; one such configuration leads to the given obstacle.} Indeed, it would be enough to show that any circuit above a certain size in the original conflict graph had an arc that was eliminated by one of a bounded number of applications of Theorem~\ref{thm-cc-main}.
	
	\medskip
	
	Let us also talk a bit about scalability. Suppose we have completely solved the problem in the above instance: how do we generalise the solution? We do not yet know, but we have a perspective that might prove helpful in that regard. It begins from the noteworthy remark that there is a finite set of ``obvious candidates'' for a global order -- in the world of permutations, they are the ones coming from spanning trees of the cell graph. Indeed, if the cell graph is already a tree, the unique candidate actually happens to work (Theorem~\ref{thm-forest-wqo}). In the $k$-cyclic setting, it is possible to reinterpret the problem as follows: fix a spanning path of the cell graph; from this, using the methods from \cite{vatter-pwo-monotone}, we may produce in a systematic way a total order on the elements of the permutation. All arcs of the conflict graph between consecutive cells along the path are ``forward'', in the sense that they agree with the order. The only ``backward'' arcs may only occur between the cells corresponding to the ends of the path (i.e., along the unique edge of the cell graph not belonging to the spanning tree). It should be possible to rewrite Theorem~\ref{thm-cc-main} in this setting, where the focus is on controlling those backward arcs. In the general case, we would be looking at the order given by a spanning tree,\footnote{Could the choice of the spanning tree matter? Perhaps we would have something to gain by considering multiple spanning trees simultaneously.} and the backward arcs would appear along the edges not in it. While the fundamental difficulties we encountered before are still there, it is possible that a spanning tree/backward arc-focused perspective gives a cleaner interpretation of the problem, and thus a better path towards a solution.
	
	\medskip
	
	One final item that we would like to mention is the fact that, while the ``figure 8'' obstacle described above looks identical to the cyclic ones from a graph perspective, what happens in the conflict graph is slightly different. Indeed, the cyclic obstacles from Theorem~\ref{thm-cc-main} induce cycles-in-a-chain in the conflict graph; however, in the ``figure 8'' example, those cycles-in-a-chain are {\em not} induced in the conflict graph -- while the two occurrences of the middle cell are independent in the virtual chain circuit, in the actual conflict graph, the two sets are complete to each other, with all arcs oriented in the same direction. This raises the question of whether we are stating our problems and conjectures in the most natural way. We will return to this in Section~\ref{sec:loh}; but before that, let us talk a bit about the other two transitions at play. Rather than going through them ``in order'', we next examine the one between $\gamma$ and $\sigma$.

	\subsection{Between bounded $\gamma$ and bounded $\sigma$}
	
%
%
	
	As before, we start by presenting a class which has bounded $\gamma$ but unbounded $\sigma$. Let us try to construct an example of this type as simple as possible. Consider a graph $G$ whose vertex set consists of three independent sets $A$, $B$ and $C$ on $n$ vertices each, and assume $G[A \cup B]$ and $G[B \cup C]$ are prime chain graphs, while $G[A \cup C]$ is edgeless. In the scope of the current discussion, we will call graphs with this structure {\em linked chain graphs}. Let us order the vertices in $B$ in increasing order with respect to their neighbourhoods in $A$, and label them by $1, \dots, n$. Since $G[B \cup C]$ is a prime chain graph, there is a unique permutation $\pi \in S_n$ such that the ordering $\pi(1), \dots, \pi(n)$ has decreasing neighbourhoods in $C$. We call $\pi$ the {\em linking permutation} of $G$. See Figure~\ref{fig-mixed-up} for an illustration.
	
	\medskip
	
	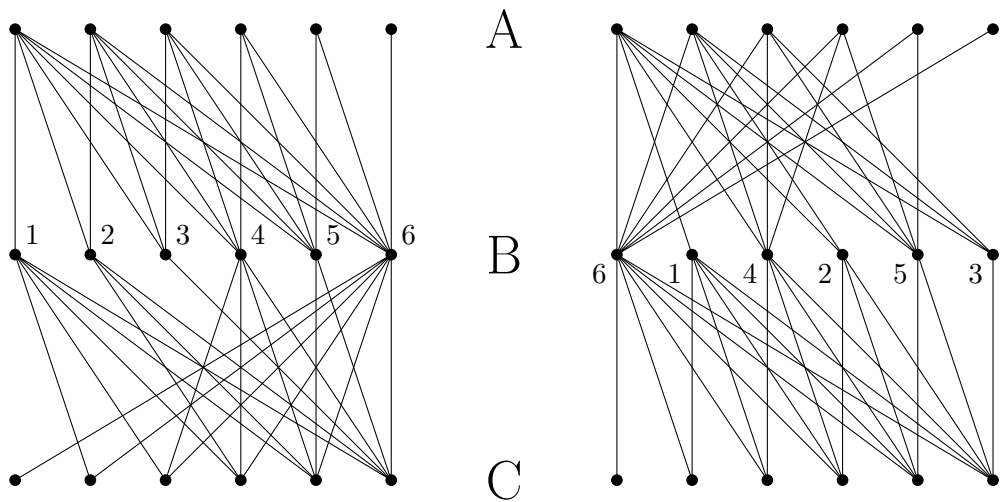
\begin{figure}[ht]
		\centering
		
		\begin{tikzpicture}[scale = 1, transform shape]
			
			\foreach \i in {0,...,5} {
				\foreach \x in {0,...,2} {
					\filldraw (\i, 3 * \x) circle (2pt);
				}
			}
			
			\foreach \i in {0,...,5} {
				\foreach \x in {\i,..., 5} {
					\draw (\i, 6) -- (\x, 3);
				}
			}
			
			\draw 
			(5, 3) -- (0, 0)
			(5, 3) -- (1, 0)
			(5, 3) -- (2, 0)
			(5, 3) -- (3, 0)
			(5, 3) -- (4, 0)
			(5, 3) -- (5, 0);
			
			\draw 
			(0, 3) -- (1, 0)
			(0, 3) -- (2, 0)
			(0, 3) -- (3, 0)
			(0, 3) -- (4, 0)
			(0, 3) -- (5, 0);
			
			\draw
			(3, 3) -- (2, 0)
			(3, 3) -- (3, 0)
			(3, 3) -- (4, 0)
			(3, 3) -- (5, 0);
			
			\draw
			(1, 3) -- (3, 0)
			(1, 3) -- (4, 0)
			(1, 3) -- (5, 0);
			
			\draw
			(4, 3) -- (4, 0)
			(4, 3) -- (5, 0);
			
			\draw
			(2, 3) -- (5, 0);
			
			\foreach \i in {8,...,13} {
				\foreach \x in {0,...,2} {
					\filldraw (\i, 3 * \x) circle (2pt);
				}
			}
			
			\foreach \i in {8,...,13} {
				\foreach \x in {\i,..., 13} {
					\draw (\i, 3) -- (\x, 0);
				}
			}
			
			\draw
			(8, 3) -- (8, 6)
			(8, 3) -- (9, 6)
			(8, 3) -- (10, 6)
			(8, 3) -- (11, 6)
			(8, 3) -- (12, 6)
			(8, 3) -- (13, 6);
			
			\draw
			(12, 3) -- (8, 6)
			(12, 3) -- (9, 6)
			(12, 3) -- (10, 6)
			(12, 3) -- (11, 6)
			(12, 3) -- (12, 6);
			
			\draw
			(10, 3) -- (8, 6)
			(10, 3) -- (9, 6)
			(10, 3) -- (10, 6)
			(10, 3) -- (11, 6);
			
			\draw
			(13, 3) -- (8, 6)
			(13, 3) -- (9, 6)
			(13, 3) -- (10, 6);
			
			\draw
			(11, 3) -- (8, 6)
			(11, 3) -- (9, 6);
			
			\draw
			(9, 3) -- (8, 6);

			\draw (6.5, 6) node {\huge{A}};
			\draw (6.5, 3) node {\huge{B}};
			\draw (6.5, 0) node {\huge{C}};
			
			\draw 
			(0, 3) node[above right] {1}
			(1, 3) node[above right] {2}
			(2, 3) node[above right] {3}
			(3, 3) node[above right] {4}
			(4, 3) node[above right] {5}
			(5, 3) node[above right] {6};
			
			\draw 
			(8, 3) node[below left] {6}
			(9, 3) node[below left] {1}
			(10, 3) node[below left] {4}
			(11, 3) node[below left] {2}
			(12, 3) node[below left] {5}
			(13, 3) node[below left] {3};
			
		\end{tikzpicture}

		\caption{A graph with linking permutation 614253}
		\label{fig-mixed-up}
	\end{figure}
	
	It is clear that every linked chain graph $G$ has $\gamma(G) \leq 3$, and that $G$ is uniquely determined by its linking permutation. Moreover, if $\pi$ is a subpattern of $\rho$, then the linked chain graph corresponding to $\rho$ contains an induced copy of the linked chain graph corresponding to $\pi$.\footnote{An easy way to see this is by noting that, in the drawing on the left-hand side of Figure~\ref{fig-mixed-up}, the ``leftmost edges'' in the bottom layer are a line segment intersection model for $\pi$.} Can we construct a sequence of permutations $\pi_n$ such that the corresponding sequence of linked chain graphs $G_n$ has unbounded $\sigma$? Note that if the sequence of permutation graphs $G_{\pi_n}$ has bounded chromatic number (that is, if each $\pi$ can be partitioned into a fixed number of increasing subsequences), say at most $t$, then $\sigma(G_n) \leq t + 2$. Indeed, we may partition $B$ into $t$ smaller bags, each corresponding to one of the increasing subsequences, and one easily checks that this new partition is locally semi-consistent.
	
	A similar argument applies if $G_{\pi_n}$ has bounded co-chromatic number. Let us thus look for permutations whose family of permutation graphs has unbounded co-chromatic number. Such families are easy to construct. For instance, we may let $\pi_n$ be the permutation on $n^2$ elements given by the concatenation $w_1w_2\dots w_n$, where $w_i$ lists the elements of $\{x : 1 \leq x \leq n^2\text{ and }x \equiv i \mod n\}$ in decreasing order (see Figure~\ref{fig-exampleset}). For the sequence $(\pi_n)_{n \geq 1}$, one checks that the size of the maximum homogeneous set in the corresponding permutation graphs is sublinear, which immediately implies unbounded co-chromatic number. 
	
	\begin{figure}[ht]
		\begin{center}
			\begin{tikzpicture}[scale=0.6, transform shape]
				
				\foreach \i in {0,...,6}
				{
					\foreach \x in {0,...,6}
					{
						\filldraw (\i - \x/7, \i/7 + \x) circle (2pt) node{}; 
					}
				}
				\draw[dotted] (0, 0) -- (0, 7);
				\draw[dotted] (0, 0) -- (7, 0);
				
			\end{tikzpicture}
		\end{center}
		\caption{The permutation $\pi_7$}
		\label{fig-exampleset}
	\end{figure}
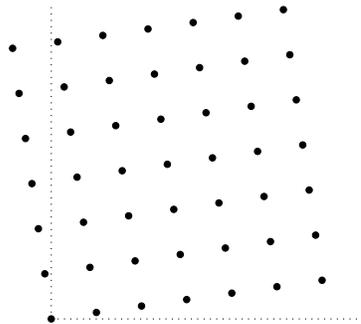

	To show that the corresponding family of linked chain graphs has indeed unbounded $\sigma$, we first need a corollary to van der Waerden's theorem on arithmetic progressions. Let us start by recalling the theorem: 
	
	\begin{theorem}[van der Waerden's Theorem \cite{van-der-waerden-beweis}]
		For any $p, k \in \mathbb N$, there exists a number $N \in \mathbb N$ such that if $[N]$ is coloured with $p$ different colours, then there are at least $k$ integers in arithmetic progression whose elements are the same colour.
	\end{theorem}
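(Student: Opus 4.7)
The plan is to prove the statement by double induction, following van der Waerden's original approach. We induct on the progression length $k$, with a nested induction on a secondary parameter tracking partial progress toward a monochromatic $k$-term progression. The base cases $k=1$ and $k=2$ are immediate: a single point is trivially an AP, and for $k=2$, pigeonhole on $p+1$ points produces two elements of the same colour.

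For the inductive step, assume that for every number of colours $p'$ there is a bound $W(p', k-1)$ guaranteeing a monochromatic AP of length $k-1$ in every $p'$-colouring of $[W(p', k-1)]$. To handle length $k$, we introduce the device of \emph{colour-focused progressions}: a family of monochromatic APs $P_1, \dots, P_s$ of length $k-1$, with common differences $d_1, \dots, d_s$ and last terms $\ell_1, \dots, \ell_s$, such that $\ell_i + d_i = z$ for a common \emph{focal point} $z$, and such that the $P_i$ use pairwise distinct colours. The key observation is that once $s$ reaches $p$, whichever colour $z$ receives must match some $P_i$, extending it to a monochromatic $k$-term AP.

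We then prove, by a nested induction on $s$, that there exists $N(p, s)$ such that every $p$-colouring of $[N(p, s)]$ either already contains a monochromatic AP of length $k$, or else contains $s$ colour-focused APs of length $k-1$. Setting $s = p$ and invoking the previous paragraph yields the desired $W(p, k) := N(p, p)$. The step from $s-1$ to $s$ is the technical heart: we partition a sufficiently long interval into consecutive blocks of length $N(p, s-1)$, and regard the full colour-sequence of each block as a ``super-colour'' drawn from an alphabet of size at most $p^{N(p, s-1)}$. Applying $W$ for progressions of length $k-1$ to this super-colouring yields a long arithmetic progression of blocks sharing the same pattern; within this structure one can take an already-existing family of $s-1$ colour-focused APs inside one block and, by adjusting the common differences to step from block to block, extend it to a family of $s$ colour-focused APs.

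The main obstacle is the bookkeeping in the interlocked inductions: the step on $s$ invokes the inductive hypothesis on $k-1$ with a substantially larger number of colours (namely $p^{N(p, s-1)}$), so one must be careful to quantify correctly over $p$ in the outer induction --- this is why we state the length-$k-1$ hypothesis for \emph{all} $p'$. One must also verify that the focal point lies inside the ambient interval, which forces a generous choice of block-count. The resulting bound on $N$ grows at least as fast as a tower of exponentials, but finiteness is all that is needed; no effort is made to optimise constants.
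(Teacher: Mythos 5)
The paper does not prove this statement at all: van der Waerden's theorem is quoted as a classical result from \cite{van-der-waerden-beweis} and used as a black box to derive Corollary~\ref{cor-vdw}, so there is no proof in the paper to compare against. Your sketch is the standard colour-focusing proof (essentially van der Waerden's original double induction as it is usually presented today), and its outline is correct: the outer induction on $k$ quantified over all numbers of colours, the notion of colour-focused $(k-1)$-progressions with a common focal point, the observation that $p$ of them in distinct colours force a monochromatic $k$-term progression, and the block/super-colouring step that passes from $s-1$ to $s$ focused progressions. The one detail you leave implicit is where the $s$-th progression actually comes from in the inductive step: adjusting the common differences of the existing $s-1$ progressions to stride across the monochromatic progression of blocks yields only $s-1$ focused progressions with a new focal point; the additional one is the progression formed by the translates of the \emph{old} focal point $z$ across the identically coloured blocks, which is monochromatic in the colour of $z$ and shares the new focus --- and its colour is distinct from the other $s-1$ unless a monochromatic $k$-term progression already exists. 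With that point made explicit (and with the routine doubling of block lengths to keep focal points in range, which you do flag), the argument is complete.
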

	
	\begin{corollary}\label{cor-vdw}
		For any $p, k \in \mathbb N$, there exists a number $N \in \mathbb N$ such that if $[N] \times [N]$ is coloured with $p$ different colours, then there are two arithmetic progressions $X = \{x_1, \dots, x_s\}$ and $Y = \{y_1, \dots, y_t\}$ of length at least $k$ such that $X \times Y$ is monochromatic.
	\end{corollary}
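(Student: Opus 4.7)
The plan is to deduce this two-dimensional version from the one-dimensional van der Waerden theorem by an iterated / product-coloring argument; in fact what is being asked for is essentially a special case of the Gallai--Witt theorem on multidimensional arithmetic progressions, and the cleanest route goes by applying van der Waerden's theorem twice -- once ``horizontally'' and once ``vertically''.

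First, I would fix $p$ and $k$ and invoke van der Waerden's theorem to obtain an integer $M$ such that every $p$-colouring of $[M]$ contains a monochromatic arithmetic progression of length $k$. Next, I would invoke van der Waerden's theorem a second time, this time with $p^M$ colours and length $k$, to obtain an integer $N'$ such that every $p^M$-colouring of $[N']$ contains a monochromatic arithmetic progression of length $k$. I would then set $N := \max(M, N')$ and restrict any given $p$-colouring $c$ of $[N]\times[N]$ to the subrectangle $[M]\times[N']$.

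Now comes the product-colouring step. For each row $y \in [N']$, I would attach the ``profile'' $\Phi(y) := (c(1,y), c(2,y), \dots, c(M,y)) \in [p]^M$. This is a colouring of $[N']$ with $p^M$ colours, so by the choice of $N'$ there is an arithmetic progression $Y = \{y_1, \dots, y_k\} \subseteq [N']$ of length $k$ on which $\Phi$ is constant. Denote the common profile by $g : [M] \to [p]$; concretely, $c(x, y) = g(x)$ for every $x \in [M]$ and every $y \in Y$. Applying van der Waerden's theorem to $g$ (using the choice of $M$), I would extract an arithmetic progression $X = \{x_1, \dots, x_k\} \subseteq [M]$ of length $k$ on which $g$ is constant, say with value $c_\ast \in [p]$. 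Then $c(x, y) = g(x) = c_\ast$ for every $(x, y) \in X \times Y$, so $X \times Y$ is the desired monochromatic arithmetic grid.

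There is no real conceptual obstacle here: the argument is a standard product/iteration trick and its cost is only in the size of $N$, which grows as a tower because we pay once the van der Waerden function at $(p, k)$ and once the van der Waerden function at $(p^M, k)$. The only things worth being careful about in a fully written-out version are (i) making sure $N$ dominates both $M$ and $N'$ so that the restriction to $[M]\times[N']$ is legitimate, and (ii) checking that $X$ and $Y$ as produced really are arithmetic progressions of length \emph{at least} $k$, which is immediate from the two applications of van der Waerden. Since the statement asks only for the existence of such $X$ and $Y$ and makes no claim about $N$ being small, this suffices.
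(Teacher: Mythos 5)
Your proof is correct and follows essentially the same strategy as the paper: two applications of van der Waerden's theorem, one per dimension, linked by an auxiliary colouring of the row indices. The only (harmless) difference is in that auxiliary colouring: the paper first extracts a monochromatic progression from each row and colours the row by the pair (progression, colour), using $p\binom{N_1}{2}$ colours, whereas you colour each row by its full colour profile in $[p]^M$ and extract the horizontal progression only afterwards --- your version uses more colours but avoids having to fix a canonical choice of progression per row.
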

	
	\begin{proof}
		We first use van der Waerden's theorem to find a number $N_1$ such that any colouring of $[N_1]$ with $p$ colours contains a monochromatic arithmetic progression of length at least $k$. Note that there are at most ${N_1 \choose 2}$ possibilities for that arithmetic progression (it is uniquely determined by its first two terms). We then use van der Waerden's theorem a second time to find a number $N_2$ such that any colouring of $[N_2]$ with $p {N_1 \choose 2}$ colours contains an arithmetic progression of length at least $k$. 
		
		Suppose now that $[N_1] \times [N_2]$ is coloured with $p$ colours. By choice of $N_1$, for each $1 \leq i \leq N_2$, the set $[N_1] \times i$ contains a monochromatic arithmetic progression $X'$ of length at least $k$. Colour $i$ with the pair $(X', c)$, where $c$ is the colour of $X'$. This gives a colouring of $[N_2]$ with (at most) $p {N_1 \choose 2}$ colours, and hence there is a monochromatic arithmetic progression $Y$ of length at least $k$. Suppose its colour is $(X, c)$; then by construction, $X \times Y$ is monochromatic with colour $c$. 
	\end{proof}
	
	We apply Corollary~\ref{cor-vdw} to the plots of the permutations $\pi_n$. Indeed, the corollary directly implies the following: for any $p$ and $t$, there exists $N$ such that whenever we partition the plot of $\pi_N$ into at most $p$ pieces, one of the pieces contains $\pi_t$ as a subpattern. With this, we are ready to prove our result.
	
	\begin{theorem} \label{thm-gamma-lambda}
		Let $\pi_n$ be the permutation on $n^2$ elements given by the concatenation $w_1w_2\dots w_n$, where $w_i$ lists the elements of $\{x : 1 \leq x \leq n^2\text{ and }x \equiv i \mod n\}$ in decreasing order. Let $G_n$ be the linked chain graph with linking permutation $\pi_n$. Then $(\sigma(G_{n}))_{n \geq 1}$ is unbounded. 
	\end{theorem}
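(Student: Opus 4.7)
The plan is to proceed by contradiction: assume $\sigma(G_N) \le p$ for all $N$ and some fixed $p$, and derive a contradiction when $N$ is sufficiently large. Fix $N$ and a locally semi-consistent chain partition of $G_N$ with at most $p$ bags. A first clean-up step is to refine the partition by replacing each bag $V_i$ with its non-empty intersections with $A$, $B$, and $C$. This at most triples the number of bags, and it preserves local semi-consistency---any induced bipartite subgraph of a chain graph is a chain graph, and the restriction of a monotone-neighbourhood ordering remains monotone; the case where two refined bags sit in the same $V_i$ splits into the clique case (complete bipartite, trivially fine) and the independent case (empty bipartite, trivially fine). We may therefore assume every bag sits entirely in $A$, $B$, or $C$, with at most $3p$ bags total.

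The $B$-bags provide a colouring of $B$ with at most $3p$ colours, which we reinterpret as a colouring of the plot of $\pi_N$. By the consequence of Corollary~\ref{cor-vdw} stated in the paragraph immediately preceding the theorem, for any prescribed $t$ we may choose $N$ large enough that some $B$-bag $V$ contains a subset $B' \subseteq V$ of size $t^{2}$ forming a $\pi_{t}$-subpattern of $\pi_N$; we take $t=2$, so $|B'| = 4$ and the subpattern is $3142$. Let $\preceq$ denote the chain order of $V$ restricted to $B'$. For each $A$-bag $A_j$, the pair $(V, A_j)$ is a semi-properly ordered chain graph, which forces $\preceq$ to list the vertices of $B'$ in monotone (non-decreasing or non-increasing) order of the level $\ell_j(b) := |N(b) \cap A_j|$. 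An analogous statement holds for the level $m_k(b) := |N(b) \cap C_k|$ with respect to each $C$-bag $C_k$.

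The crux is to show that $\preceq$ on $B'$ coincides with the natural (position-in-$B$) order or its reverse. A standard triple-to-global bootstrap (any order that is neither natural nor reverse admits some triple witnessing this) reduces the task to showing that for any three elements $a <_{\text{nat}} b <_{\text{nat}} c$ of $B'$, $\preceq$ restricted to $\{a, b, c\}$ is $abc$ or $cba$, not one of the four mixed orderings $acb, bac, bca, cab$. The prime chain graph property of $G[A \cup B]$ supplies, for each pair of distinct elements of $B'$, some $A$-bag $A_j$ whose $\ell_j$ strictly distinguishes them; moreover each $\ell_j$ is weakly increasing along the natural order. In each of the four mixed orderings, a short calculation shows that a distinguishing $\ell_j$ cannot simultaneously be monotone along $\preceq$ and non-decreasing along the natural order, yielding a contradiction. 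The symmetric argument via the $C$-bags shows that $\preceq$ on $B'$ must likewise coincide with the $C$-order (i.e.\ the decreasing-$C$-neighbourhood order) or its reverse.

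Combining the two conclusions forces the natural order on $B'$ to equal the $C$-order or its reverse, which would make $\pi_2 = 3142$ either the identity or the reverse; both are false, giving the desired contradiction. I expect the main technical difficulty to lie in cleanly verifying that refining bags through $A$, $B$, $C$ preserves local semi-consistency across the homogeneous-set bookkeeping (clique versus independent-set cases, and for pairs within and between the original bags), since this is what lets us isolate the action of $A$-bags and $C$-bags in the triple argument; after this is in place, both the van der Waerden step and the triple case analysis are essentially mechanical.
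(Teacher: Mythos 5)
Your proof is correct, and while it shares the paper's overall architecture (reduce to bags lying inside $A$, $B$ or $C$; apply the van der Waerden consequence of Corollary~\ref{cor-vdw}; conclude from the fact that no single order on a portion of $B$ can be simultaneously monotone with respect to $A$-neighbourhoods and $C$-neighbourhoods when the induced linking pattern is non-monotone), the key reduction step is genuinely different. The paper takes the common refinement of all three partitions of $[n^2]$ induced by the $A$-, $B$- and $C$-bags (at most $p^3$ parts), shows that each part induces a smaller linked chain graph whose linking permutation is the corresponding subpattern of $\pi_n$ and which inherits a \emph{three-bag} locally semi-consistent partition, and then applies the van der Waerden consequence to this refinement to land on a copy of $G_t$ admitting a 3-bag partition, which is impossible for $t \geq 2$. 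You instead colour only $B$, find a $3142$-patterned quadruple $B'$ inside a single $B$-bag, and show directly --- via the level functions $\ell_j$, the strict nesting supplied by primality, and the triple analysis --- that the bag's order on $B'$ must agree with both the $A$-order and the $C$-order up to reversal. Your route avoids the $p^3$-fold refinement and the verification that refined pieces are again linked chain graphs, at the price of the four-case triple computation; that computation does check out (for each mixed ordering of a natural triple $a < b < c$, a bag witnessing $\ell(a) < \ell(b)$ or $\ell(b) < \ell(c)$, combined with weak monotonicity of every $\ell_j$ along the natural order, contradicts monotonicity along $\preceq$), and the bootstrap from triples to the whole of $B'$ is immediate since any two triples of a four-element set overlap in two elements. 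One cosmetic point: depending on how you identify $B'$ with points of the plot, the pattern relating the natural order to the $C$-order on $B'$ may be $3142$ or its inverse $2413$; both are non-monotone, so the contradiction stands either way.
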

	
	\begin{proof}
		The proof is similar in concept to the one of Theorem~\ref{thm-cc-unbdd-let}: we show that if all $G_n$ have locally semi-consistent partitions with a bounded number $p$ of bags,\footnote{By replacing $p$ by $3p$ if necessary, we may assume that each bag is contained entirely in $A$, $B$ or $C$.} then in fact, they have locally semi-consistent partitions with only 3 bags. In other words, the partition of the linked chain graphs into sets $A, B, C$ should already be a locally semi-consistent partition. This is clearly not the case for $n \geq 2$, as one can find three vertices in $B$ whose $A$-neighbourhoods are in strictly decreasing order, but whose $C$-neighbourhoods are in neither increasing nor decreasing order (and so, no ordering of $B$ will be locally semi-consistent).
		
		\medskip
		
		Suppose now that every $G_n$ has a locally semi-consistent partition into $p$ bags.  Label the vertices of $A$ by $x_1, \dots, x_{n^2}$ in decreasing order of their $B$-neighbourhoods; label the vertices of $B$ by $y_1, \dots, y_{n^2}$ in increasing order of their $A$-neighbourhoods (so that $y_{\pi_n(1)}, \dots, y_{\pi_n(n^2)}$ is decreasing with respect to the neighbourhoods in $C$); finally, label the vertices of $C$ by $z_1, \dots, z_{n^2}$ in increasing order of their $B$-neighbourhoods. 
		
		Let $A_1, \dots, A_{p_1}$, $B_1, \dots, B_{p_2}$, $C_1, \dots, C_{p_3}$ be the parts lying in $A$, $B$ and $C$ respectively (so that $p = p_1 + p_2 + p_3$). We use the labelling described above to obtain, from the partitions of $A, B$ and $C$, three partitions of $[n^2]$. Define:
		
		\begin{itemize}
			\item $A'_i := \{j \in [n^2] : x_j \in A_i\}$;
			\item $B'_i := \{j \in [n^2] : y_j \in B_i\}$;
			\item $C'_i := \{j \in [n^2] : z_{\pi_n^{-1}(j)} \in C_i\}$ (take note of the $\pi_n^{-1}$ in the index).
		\end{itemize}
		
		Now consider the common refinement of the three partitions $(A'_i), (B'_i)$ and $(C'_i)$. This is a partition $D'_1, \dots, D'_r$ of $[n^2]$, with $r \leq p^3$. From this refinement, we construct a new partition $(D_i)$ of $V(G_n)$ by putting $D_i := \{x_j : j \in D'_i\} \cup \{y_j : j \in D'_i\} \cup \{z_{\pi_n^{-1}(j)} : j \in D'_i\}$. By construction, for each $i$, there exist $i_1, i_2, i_3$ such that $D_i \subseteq A_{i_1} \cup B_{i_2} \cup C_{i_3}$. This means that the induced subgraph $G_n[D_i]$ has $\sigma(G_n[D_i]) = 3$, since the partition $(A_i) \cup (B_i) \cup (C_i)$ of $V(G)$ is by assumption locally semi-consistent.
		
		We claim $G_n[D_i]$ is, in fact, a linked chain graph whose linking permutation is the subpattern of $\pi_n$ induced by the indices in $D'_i$. To see this, note that the neighbourhood of $y_j$ in $A$ is the interval $x_1, \dots, x_j$, while its neighbourhood in $C$ is the interval $z_{\pi_n^{-1}(j)}, \dots, z_{n^2}$. Thus by construction, any vertex $y_j \in B \cap D_i$ has its rightmost neighbour from $A$ and its leftmost neighbour from $C$ also in $D_i$. From this, writing $B \cap D_i = \{y_{j_1}, \dots, y_{j_s}\}$ with $j_1 < \dots < j_s$, it is easy to see that this ordering of the vertices in $B \cap D_i$ is strictly increasing with respect to the neighbourhoods in $A \cap D_i$, and the ordering $y_{\pi_n(j_1)}, \dots, y_{\pi_n(j_t)}$ is strictly decreasing with respect to the neighbourhoods in $C \cap D_i$. But $\pi_n$ reorders the $y_{j_l}$ in exactly the same way as the the subpattern of $\pi_n$ induced by the indices in $D_i'$ reorders $\{1, \dots, t\}$, and so our claim follows.
		
		Finally, to arrive at our contradiction, we use Corollary~\ref{cor-vdw} (and the paragraph after it): the $D'_i$ are a partition of $[n^2]$, and thus of the plot of $\pi_n$, into at most $p^3$ parts. It follows that, for any fixed $t$, if $n$ is large enough, one of those parts, say the $i$th, will contain $\pi_t$ as a pattern. By removing the appropriate vertices of $G_n[D_i]$, we can produce a locally semi-consistent partition of $G_t$ into 3 bags, which is not possible for $t \geq 2$, as discussed at the beginning of the proof.
		
	\end{proof}
	
	This construction shows that, as far as local semi-consistency is concerned, things can go wrong even when there is a single bag where two chain graphs meet. A first step to understanding $\sigma$ is an exhaustive analysis of this setting; one would perhaps aim for a clean characterisation of bounded $\sigma$ among linked chain graphs in terms of some conditions on the linking permutations. Since the obstacle to bounded $\sigma$ comes from the co-chromatic number of the permutation graphs, we conjecture (and it should not be overly difficult to prove or disprove):
	
	\begin{conjecture}
		Let $(G_n)_{n \geq 1}$ be a family of linked chain graphs, with linking permutations $\pi_1, \pi_2, \dots$. Let $\mathcal X$ be the hereditary closure of this family. Then $\sigma$ is bounded in $\mathcal X$ if and only if the hereditary closure of the permutation graphs $G_{\pi_i}$ does not contain all unions of cliques, or all complete bipartite graphs. 
	\end{conjecture}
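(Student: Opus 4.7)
The plan is to first show that for any linked chain graph $G$ with linking permutation $\pi$, the value $\sigma(G)$ agrees with the co-chromatic number $\cel(G_\pi)$ of the associated permutation graph up to a small multiplicative constant. Specifically, $\sigma(G) \le \cel(G_\pi) + 2$ and $\sigma(G) \ge \cel(G_\pi)/3$. For the upper bound, start from a partition of $\pi$ into $\cel(G_\pi) = t$ monotone subsequences (equivalently, of $G_\pi$ into $t$ cliques and independent sets) and build a locally semi-consistent partition of $G$ using $A$ as one bag, $C$ as another, and $B$ split into $t$ bags along the monotone subsequences; verification that each pair of bags is semi-properly ordered is direct. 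For the lower bound, after the usual factor-of-$3$ reduction in which every bag lies entirely within one of $A$, $B$, $C$, any bag $B_k \subseteq B$ must carry a linear order making its $A$-neighborhoods and its $C$-neighborhoods simultaneously monotone. Since these are controlled by the index $j$ and by $\pi(j)$ respectively, the subpattern of $\pi$ on $B_k$ must be monotone, so $B$ is partitioned into at least $\cel(G_\pi)$ monotone subpatterns.

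With this reduction in hand, the conjecture becomes a statement about when $\cel$ is bounded in the hereditary class $\mathcal Y$ of permutation graphs. One direction is immediate: $\cel(nK_n) = n$ (any clique-part of a clique/independent-set partition meets at most one of the $n$ components, and any independent-set-part contributes at most one vertex per component, so at least $n$ parts are needed), hence if all disjoint unions of cliques lie in $\mathcal Y$ then $\cel$ is unbounded, and bounded $\sigma$ already forces $\mathcal Y$ to omit some disjoint union of cliques.

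For the converse, the plan is to imitate the Ramsey/van der Waerden strategy of the proof of Theorem~\ref{thm-gamma-lambda}: starting from a $\pi_i$ whose permutation graph has very high $\cel$, and applying Corollary~\ref{cor-vdw} to an optimal monotone partition of $\pi_i$, one extracts a large structured monochromatic rectangle that should simultaneously yield a large $L_{N,N}$ subpattern (placing $nK_n$ into $\mathcal Y$) and a large dual subpattern (the appropriate complete-bipartite-type counterpart). Iterating across the family as $\cel \to \infty$ would then force $\mathcal Y$ to contain all of each side.

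The main obstacle is that the conjecture as stated appears to be false, or at least in need of careful reinterpretation. Taking $\pi_n := L_{n,n}$, the associated linked chain graph satisfies $\sigma(G_n) \ge \cel(G_{\pi_n})/3 = \cel(nK_n)/3 = n/3$, which is unbounded, yet $\mathcal Y$ is just the class of disjoint unions of cliques and contains no induced $K_{1,2}$, so not all complete bipartite graphs lie in $\mathcal Y$; the conjecture as written thus (incorrectly) predicts bounded $\sigma$. The natural repair is to replace ``all complete bipartite graphs'' by ``all complete multipartite graphs'', restoring the graph-complement duality of $\cel$ (since $\cel(K_{n,n,\ldots,n}) = n$ mirrors $\cel(nK_n) = n$). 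The statement I would actually attempt to prove reads: \emph{$\sigma$ is bounded if and only if $\mathcal Y$ omits some disjoint union of cliques and omits some complete multipartite graph.} Executing the two-sided Ramsey extraction sketched above for this revised formulation is the hardest step, and would likely require a careful induction on $\cel$ to jointly control the layered and the ``complete multipartite'' witnesses.
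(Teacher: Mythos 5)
The statement you are addressing is a conjecture; the paper contains no proof of it, so I am assessing your proposal on its own terms rather than against an argument of the authors. The crux of your plan is the two-sided reduction $\sigma(G)=\Theta(\cel(G_\pi))$, and while the upper bound $\sigma(G)\le\cel(G_\pi)+2$ is exactly the paper's own observation, your justification of the lower bound has a genuine gap. Local semi-consistency constrains the order on a bag $B_k\subseteq B$ only relative to the neighbourhoods in each \emph{individual} bag $A_l$ of the partition of $A$, not relative to the full $A$-neighbourhoods: two vertices of $B_k$ with distinct (nested) $A$-neighbourhoods may be indistinguishable from the viewpoint of a particular $A_l$, so ``monotone for every $A_l$'' does not formally yield ``monotone in $j$''. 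This is precisely the difficulty that forces the paper's proof of Theorem~\ref{thm-gamma-lambda} through the common refinement and Corollary~\ref{cor-vdw} rather than arguing bag by bag. Your claim is nevertheless true, but it needs an argument such as the following. Write $N_A(y_j)=\{x_1,\dots,x_j\}$ and take $j_1<j_2<j_3$ with all three $y_{j_i}$ in $B_k$. The $A$-bag containing $x_{j_2}$ satisfies $N_{A_l}(y_{j_1})\subsetneq N_{A_l}(y_{j_2})$ and $N_{A_l}(y_{j_1})\subsetneq N_{A_l}(y_{j_3})$, so whichever of ``increasing'' or ``decreasing'' it is assigned, $y_{j_1}$ must be extremal among the three in the order on $B_k$; the bag containing $x_{j_3}$ likewise forces $y_{j_3}$ to be extremal. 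Hence every triple of $B_k$ is ordered monotonically in $j$, and therefore so is all of $B_k$; the same argument on the $C$ side makes each $B$-bag a monotone pattern of $\pi$, giving your bound. You must include this step: as written, the assertion ``any bag must carry an order making its $A$-neighbourhoods monotone'' is exactly the point a referee would reject.

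Once that is repaired, your diagnosis of the conjecture itself is, in my view, correct and is the most valuable part of the proposal. The layered permutations ($G_{\pi_n}=nK_n$, $\cel=n$, hence $\sigma(G_n)\ge n/3$) refute the disjunctive reading of the statement exactly as you describe; note also that the other reading (``omits some union of cliques \emph{and} some complete bipartite graph'') is refuted by $\pi_n=\iota_n\ominus\iota_n$, whose graphs $K_{n,n}$ have $\cel=2$ and hence $\sigma\le 4$, while their hereditary closure contains all complete bipartite graphs. So under either parsing the literal statement fails, and your repair --- $\sigma$ bounded iff the class omits some union of cliques \emph{and} some complete multipartite graph, i.e.\ iff $\cel$ is bounded on $\mathcal Y$ --- is the version consistent with the authors' stated intuition that ``the obstacle to bounded $\sigma$ comes from the co-chromatic number''. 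Two caveats remain. First, the forward Ramsey direction of the repaired statement (unbounded $\cel$ on a hereditary class of permutation graphs forces all unions of cliques or all complete multipartite graphs) is only gestured at in your last paragraph; this is the one place where genuine work is left, and it should be handled by appealing to (or reproving) the known characterisation of bounded cochromatic number for perfect graphs rather than by re-running the van der Waerden extraction, which your reduction renders unnecessary. Second, a pleasant by-product of your (corrected) lower bound is that it also simplifies the paper's own Theorem~\ref{thm-gamma-lambda}: since each $B$-bag is already monotone, one gets $\sigma(G_n)\ge\cel(G_{\pi_n})/3$ directly, with no need for Corollary~\ref{cor-vdw}. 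It would be worth pointing this out explicitly when you write this up.
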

	
	In the general setting of bounded $\gamma$, we would be dealing with several bags $A_1, \dots, A_p$; it would make sense to attempt to generalise the notion of ``linking permutation'' by associating to each bag $A_i$ a set of permutations $\pi^i_{j \to k}$, which intuitively describe how we need to permute the bag $A_i$ to get from an ordering that makes $G[A_i \cup A_j]$ properly ordered to one that makes $G[A_i \cup A_k]$ properly ordered. Of course, there are several complications here: 
	\begin{enumerate}
		\item The chain graphs between pairs of bags will not, in general, be prime, so the permutations $\pi^i_{j \to k}$ will not be uniquely defined. How do we decide which ones to use? 
		
		\item The linking permutations alone do not suffice to characterise boundedness of $\sigma$; one might also need to take into account the interactions between them, much like in the chain circuit setting. 
		
		\item Assuming we obtain a characterisation of bounded $\sigma$ in terms of conditions on the linking permutations and their interactions, how do we transform it into a minimal class characterisation?
		
	\end{enumerate} 
	
	Dealing with those difficulties is a good subject for future research. Finally, before moving on to the final transition, let us relate what we have seen to Problem~\ref{prob:bddlet}. While we have found classes of unbounded $\sigma$ (and thus unbounded lettericity), those classes were not minimal. In fact, with a little care, one can find such a minimal class within our linked chain graph example. To see it, consider the graphs whose linking permutation is direct sum of 21s, or a skew sum of 12s. Note that the corresponding classes are in fact smaller than the conjectured minimal classes of unbounded $\sigma$. The best way to identify the minimal class of unbounded lettericity hiding in those permutations is pictorial, so we will let the figures speak for us.
	
	\bigskip
	
		\begin{figure}[hbt!]
		\begin{subfigure}[t]{1\linewidth}
		\centering
		
		\begin{tikzpicture}[scale = 0.8, transform shape]
			
			\foreach \i in {0,...,7} {
				\foreach \x in {0,...,2} {
					\filldraw (2 * \i, 3 * \x) circle (2pt);
				}
			}
			
			\foreach \i in {0,...,7} {
				\foreach \x in {\i,..., 7} {
					\draw (2 * \i, 6) -- (2* \x, 3);
				}
			}
			
			\foreach \i in {1,...,8}
			\draw (2 * \i - 2, 3) node[above right] {\i}; 
			
			\foreach \i in {0,...,7}
			\draw (12, 3) -- (2 * \i, 0);
			
			\foreach \i in {1,...,7}
			\draw (14, 3) -- (2 * \i, 0);
			
			\foreach \i in {2,...,7}
			\draw (8, 3) -- (2 * \i, 0);
			
			\foreach \i in {3,...,7}
			\draw (10, 3) -- (2 * \i, 0);
			
			\foreach \i in {4,...,7}
			\draw (4, 3) -- (2 * \i, 0);
			
			\foreach \i in {5,...,7}
			\draw (6, 3) -- (2 * \i, 0);
			
			\foreach \i in {6,...,7}
			\draw (0, 3) -- (2 * \i, 0);
			
			\foreach \i in {7,...,7}
			\draw (2, 3) -- (2 * \i, 0);
			
		\end{tikzpicture}
		\caption{A graph with linking permutation 78563412 (a skew sum of 12s)}
	\end{subfigure}
	
			\smallskip
	
	\begin{subfigure}[t]{1\linewidth}
		\centering
		
		\begin{tikzpicture}[scale = 0.8, transform shape]
			
			\foreach \i in {0,...,7} {
				\foreach \x in {0,...,2} {
					\filldraw (2 * \i, 3 * \x) circle (2pt);
				}
			}
			
			\foreach \i in {0,...,7} {
				\foreach \x in {\i,..., 7} {
					\draw (2 * \i, 6) -- (2* \x, 3);
				}
			}
			
			\foreach \i in {1,...,8}
			\draw (2 * \i - 2, 3) node[above right] {\i}; 
			
			\foreach \i in {0,...,7}
			\draw (12, 3) -- (2 * \i, 0);
			
			\foreach \i in {0,...,6}
			\draw (14, 3) -- (2 * \i, 0);
			
			\foreach \i in {0,...,5}
			\draw (8, 3) -- (2 * \i, 0);
			
			\foreach \i in {0,...,4}
			\draw (10, 3) -- (2 * \i, 0);
			
			\foreach \i in {0,...,3}
			\draw (4, 3) -- (2 * \i, 0);
			
			\foreach \i in {0,...,2}
			\draw (6, 3) -- (2 * \i, 0);
			
			\foreach \i in {0,...,1}
			\draw (0, 3) -- (2 * \i, 0);
			
			\foreach \i in {0,...,0}
			\draw (2, 3) -- (2 * \i, 0);
			
		\end{tikzpicture}
		\caption{The same graph with the bottom vertices rearranged}
	\end{subfigure}
	
		\smallskip
	
	\begin{subfigure}[t]{1\linewidth}
		\centering
		
		\begin{tikzpicture}[scale = 0.8, transform shape]
			
			\foreach \i in {0,...,7} {
				\foreach \x in {1} {
					\filldraw (2 * \i, 3 * \x) circle (2pt);
				}
			}
			
			\foreach \i in {1, 3, 5, 7} {
				\foreach \x in {0, 2} {
					\filldraw (2 * \i, 3 * \x) circle (2pt);
				}
			}

			\foreach \i in {1, 3, 5, 7} {
				\foreach \x in {\i,..., 7} {
					\draw (2 * \i, 6) -- (2* \x, 3);
				}
			}
			
			\foreach \i in {1,...,8}
			\draw (2 * \i - 2, 3) node[above right] {\i}; 
			
			\foreach \i in {1, 3, 5, 7}
			\draw (12, 3) -- (2 * \i, 0);
			
			\foreach \i in {1, 3, 5}
			\draw (14, 3) -- (2 * \i, 0);
			
			\foreach \i in {1, 3, 5}
			\draw (8, 3) -- (2 * \i, 0);
			
			\foreach \i in {1, 3}
			\draw (10, 3) -- (2 * \i, 0);
			
			\foreach \i in {1, 3}
			\draw (4, 3) -- (2 * \i, 0);
			
			\foreach \i in {1}
			\draw (6, 3) -- (2 * \i, 0);
			
			\foreach \i in {1}
			\draw (0, 3) -- (2 * \i, 0);
			
			
		\end{tikzpicture}
		\caption{An induced subgraph of the above graph}
	\end{subfigure}
	
		\smallskip
	
	\begin{subfigure}[t]{1\linewidth}
		\centering
		
		\begin{tikzpicture}[scale = 0.8, transform shape]
			
			\foreach \i in {0,...,7} {
				\foreach \x in {1} {
					\filldraw (2 * \i, 3 * \x) circle (2pt);
				}
			}
			
			\foreach \i in {1, 3, 5, 7} {
				\foreach \x in {2} {
					\filldraw (2 * \i, 3 * \x) circle (2pt);
				}
			}
			
			\foreach \i in {0, 2, 4, 6} {
				\foreach \x in {2} {
					\filldraw (2 * \i, 3 * \x) circle (2pt);
				}
			}

			\foreach \i in {1, 3, 5, 7} {
				\foreach \x in {\i,..., 7} {
					\draw (2 * \i, 6) -- (2* \x, 3);
				}
			}
			
			\foreach \i in {1,...,8}
			\draw (2 * \i - 2, 3) node[above right] {\i}; 
			
			\foreach \i in {1, 3, 5, 7}
			\draw (12, 3) -- (2 * \i - 2, 6);
			
			\foreach \i in {1, 3, 5}
			\draw (14, 3) -- (2 * \i - 2, 6);
			
			\foreach \i in {1, 3, 5}
			\draw (8, 3) -- (2 * \i - 2, 6);
			
			\foreach \i in {1, 3}
			\draw (10, 3) -- (2 * \i - 2, 6);
			
			\foreach \i in {1, 3}
			\draw (4, 3) -- (2 * \i - 2, 6);
			
			\foreach \i in {1}
			\draw (6, 3) -- (2 * \i - 2, 6);
			
			\foreach \i in {1}
			\draw (0, 3) -- (2 * \i - 2, 6);
			
			
		\end{tikzpicture}
		\caption{The same induced subgraph, rearranged}
	\end{subfigure}
	
	\caption{A hidden minimal class of unbounded lettericity}
	
	\end{figure}
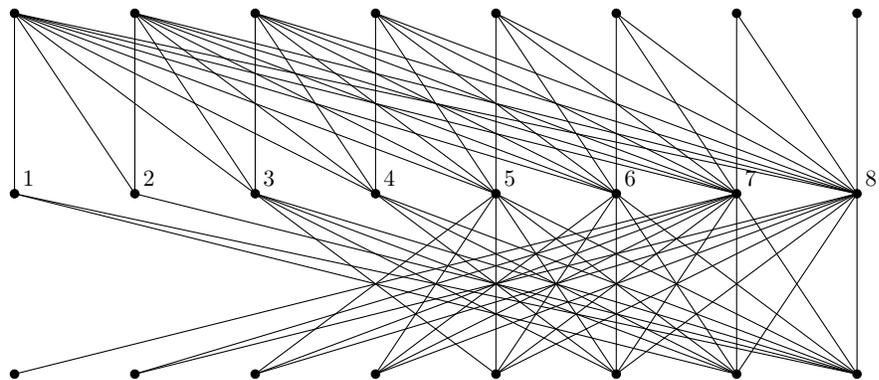
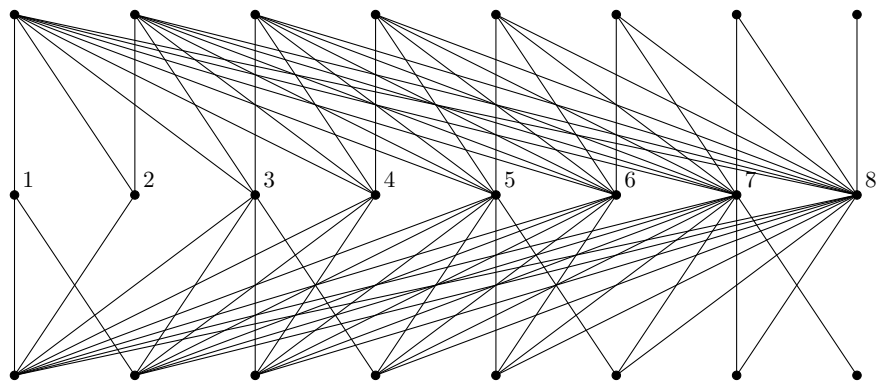
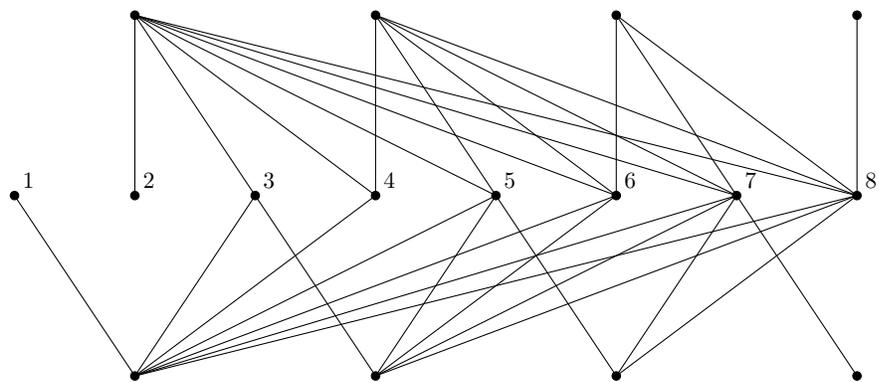
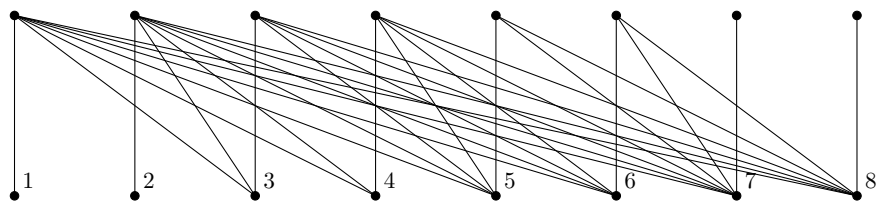

	Lo and behold! It is, up to complementation, the same 4-chain circuit from \cite{alecu-2p3} and \cite{primelet}. The natural follow-up question is: are there obstacles to lettericity that have bounded $\gamma$ and unbounded $\sigma$? Since things get complicated when several bags interact, we will refrain from making a conjecture one way or the other, and instead state it as an open problem:
	
	\begin{problem}
		Are there minimal classes of unbounded lettericity that have bounded $\gamma$ and unbounded $\sigma$? 
	\end{problem}

	\subsection{Between bounded $\sigma$ and bounded $\lambda$}
	
	There is one transition left to examine. We believe this is the simplest of the transitions, but nevertheless, a good understanding of it might give us precious clues on how to proceed with the other ones. It is the transition between monotone griddability, and monotone griddability by a PMM -- in other words, the transition between bounded $\sigma$ and bounded $\lambda$. Constructing an example of a class with bounded $\sigma$ but unbounded $\lambda$ is, at this point, little more than an exercise. We present one in Figure~\ref{fig-lambda-lambda}: we arrange bags in a cycle with properly ordered prime chain graphs between successive bags, except for one pair of bags where we ``twist'' the ordering in one of the bags. That those graphs have unbounded $\lambda$ is essentially a (simpler) variant of Theorem~\ref{thm-gamma-lambda} to show that more bags do not help, together with the remark that, by uniqueness of the proper orderings between bags, we may not make the partition in the figure locally consistent by simply reordering. 
	
	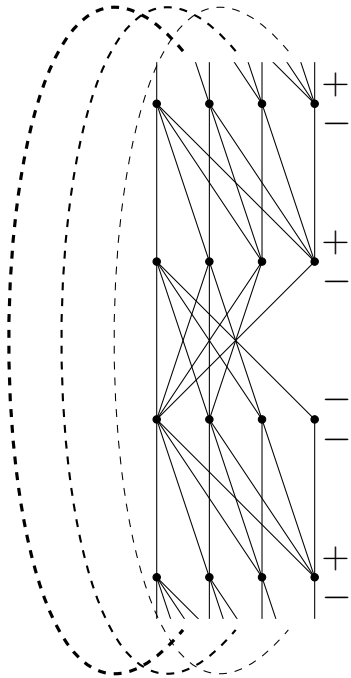
\begin{figure}[ht]
		\centering
		\begin{tikzpicture}[scale = 0.7, transform shape]
			\clip (-4,-2.5) rectangle ++(8.1,14);
			
			\foreach \i in {0,...,3} {
				\foreach \x in {0,...,3} {
					\filldraw (\i, 3 * \x) circle (2pt) node{};
				}
			}
			
			\foreach \i in {-1, 0, 2, 3} {
				\foreach \x in {0,...,3} {
					\foreach \y in {\x, ..., 3} {
						\draw (\y, 3 * \i) -- (\x, 3 * \i + 3);
					}
				}
			}
			
			\foreach \x in {0,...,3} {
				\foreach \y in {\x, ..., 3} {
					\draw (3 - \y, 3) -- (\x, 6);
				}
			}
			
			\filldraw[white] (-0.1, -3.1) rectangle ++(3.2, 2.3);
			\filldraw[white] (-0.1, 12.1) rectangle ++(3.2, -2.3);
			
			\draw (0.5,10) edge [bend right = 140, very thick, looseness = 1.6, dashed] (0.5,-1);
			\draw (1.5,10) edge [bend right = 140, thick, looseness = 1.6, dashed] (1.5,-1);
			\draw (2.5,10) edge [bend right = 140, looseness = 1.6, dashed] (2.5,-1);
			
			\draw (3, 0) node[above right] {\huge $+$}; 
			\draw (3, 0) node[below right] {\huge $-$};
			
			\draw (3, 3) node[above right] {\huge $-$}; 
			\draw (3, 3) node[below right] {\huge $-$};
			
			\draw (3, 6) node[above right] {\huge $+$}; 
			\draw (3, 6) node[below right] {\huge $-$};
			
			\draw (3, 9) node[above right] {\huge $+$}; 
			\draw (3, 9) node[below right] {\huge $-$};

		\end{tikzpicture}   
		\caption{Graphs of bounded $\sigma$ but unbounded $\lambda$}
		\label{fig-lambda-lambda}
	\end{figure}
	
	We have added signs to the figure to emphasize that, like with partial multiplication matrices, there is a parity problem at play. A ``$+$'' indicates that the left-to-right order from the figure gives increasing neighbourhoods in the bag above/below as appropriate, while a ``$-$'' indicates a decreasing order. Complementing the edges between two successive bags flips the two signs ``between'' them, while reversing the order within a bag flips the two signs at that bag. At any rate, those operations cannot change the parity of the number of $-$s. In view of Theorem~\ref{thm-cc-main}, we would not be surprised if, in partial multiplication matrices with cyclic cell graph, those ``twisted cycles in a chain'' are the only obstacles to bounded $\lambda$. We state this as a conjecture:
	
	\begin{conjecture}
		Suppose $\mathcal X$ is a class of permutations monotone griddable by a matrix $M$ with a cyclic cell graph. Then $\mathcal X$ is monotone griddable by a PMM if and only if the corresponding class of permutation graphs does not contain arbitrarily wide constructions like the one in Figure~\ref{fig-lambda-lambda} (or complements).
	\end{conjecture}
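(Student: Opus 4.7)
The plan splits into two directions, with the ``only if'' direction being considerably easier. For that direction, suppose $\mathcal X$ is monotone griddable by a PMM; then by the graph--permutation translation discussed at the start of Section~\ref{sec:new-results}, the corresponding class $\mathcal G_{\mathcal X}$ of permutation graphs has bounded $\lambda$. On the other hand, the construction of Figure~\ref{fig-lambda-lambda} has unbounded $\lambda$: the argument runs parallel to Theorem~\ref{thm-cc-unbdd-let}, exploiting the fact that each chain graph between consecutive bags is prime, so the potentially locally consistent orderings are forced, and the odd parity of ``$-$'' signs makes every attempt to assemble them globally fail. Arbitrarily wide copies of this construction, or of any complement obtained via the standard operations, therefore cannot appear in $\mathcal X$.

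For the ``if'' direction, assume $\mathcal X \subseteq \grid(M)$ with cell graph of $M$ a $k$-cycle, and that $\mathcal X$ is not monotone griddable by any PMM. Translating again, $\mathcal G_{\mathcal X}$ has unbounded $\lambda$, while the cells of $M$ provide each $G \in \mathcal G_{\mathcal X}$ with a locally semi-consistent cyclic partition into at most $k$ homogeneous bags, with chain graphs between consecutive ones and either empty or complete relationships between non-consecutive ones (up to refinement via Proposition~\ref{prop-pmm}). The task thus reduces to a purely graph-theoretic claim: in such a family with unbounded $\lambda$, we can extract arbitrarily wide induced copies of the twisted construction.

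I would carry this out in two stages. First, a Ramsey/van der Waerden step, in the spirit of Corollary~\ref{cor-vdw} and the proof of Theorem~\ref{thm-gamma-lambda}: within each bag, pass to a wide subset so that the chain graphs between consecutive bags restrict to \emph{prime} chain graphs, with the orderings compatible across intersections. Second, a parity analysis: once each chain graph is prime, a well-defined ``sign'' can be attached to every pair of adjacent bags, and going once around the cycle yields a product of signs which is either even or odd. In the even case, after possibly flipping orderings, we obtain a locally \emph{consistent} partition into only $k$ bags, contradicting unbounded $\lambda$; in the odd case, we have located the wide induced twisted subgraph. A conflict graph argument in the style of Lemma~\ref{cycliciffdag} formalises this dichotomy, and in fact rules out any ``hidden'' source of obstruction beyond the parity.

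The main obstacle will be the first stage -- uniformly extracting prime chain graphs across all adjacent pairs of bags while preserving the cyclic structure and the relative vertex orderings. The chain graphs between non-adjacent bags are not forced to be prime or even chain graphs, and the interactions between how a single bag is ordered relative to its two cyclic neighbours may produce delicate obstructions. However, because the cell graph contains only one cycle (and none of the richer topology of the ``figure 8'' obstacles discussed after Theorem~\ref{thm-cc-main}), I expect an iterated Ramsey argument along the lines of Theorem~\ref{thm-gamma-lambda} to suffice, making this conjecture strictly more tractable than its counterparts for general cell graph topologies.
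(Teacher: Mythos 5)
This statement is left as an open conjecture in the paper: the authors provide no proof, only the surrounding heuristic discussion (the parity of the ``$-$'' signs, the remark that unboundedness of $\lambda$ for the twisted construction is ``essentially a (simpler) variant of Theorem~\ref{thm-gamma-lambda}''). So there is no proof in the paper to compare yours against, and what you have written is a plan rather than an argument. Your plan does track the authors' intentions closely -- the sign-parity invariant, the Ramsey-style extraction in the spirit of Corollary~\ref{cor-vdw} and Theorem~\ref{thm-gamma-lambda}, and the conflict-graph formalism -- but it does not close the conjecture, and you acknowledge as much when you flag stage one as ``the main obstacle''.

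Two gaps deserve to be named concretely. First, in the ``only if'' direction you invoke unboundedness of $\lambda$ for the construction of Figure~\ref{fig-lambda-lambda}; this is exactly the part the paper only sketches, and it already requires the ``more bags do not help'' Ramsey argument, so it cannot be treated as given. Second, and more seriously, your ``if'' direction dichotomy is stated per extracted substructure, but the conjecture is a statement about the class: you must show that if no arbitrarily wide odd-parity substructure occurs, then \emph{every} graph in the class has $\lambda$ bounded by a uniform constant. The even-parity branch of your dichotomy only tells you that the particular wide prime substructure you extracted is locally consistent with $k$ bags; it does not bound $\lambda$ of the ambient graph $G$, whose chain graphs are not prime and whose orderings are not forced. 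Bridging that gap requires a splitting/induction argument in the style of Theorem~\ref{thm-cc-main} (bounding the number of splits needed when the forbidden structure is absent), which your proposal does not supply. Until both of these are carried out, the statement remains a conjecture.
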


	The next step is to try to understand how such a result would generalise; it is likely that the solution is strictly easier than the one to Problem~\ref{lambda-let}, yet very insightful. Finally, in view of Problem~\ref{prob:bddlet}, we have the (slightly weaker) conjecture that the class of graphs like the one depicted in Figure~\ref{fig-lambda-lambda} is {\em minimal} of unbounded lettericity:
	
	\begin{conjecture}
		The class of twisted cycles in a chain is minimal of unbounded lettericity.
	\end{conjecture}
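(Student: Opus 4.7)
First I would pin down the formal object: a \emph{twisted $k$--chain circuit} is a graph admitting a $k$--chain circuit partition $A_1, \dots, A_k$ together with an assignment of signs $\varepsilon_1, \dots, \varepsilon_k \in \{+, -\}$ to the bags (one per bag, as in Figure~\ref{fig-lambda-lambda}), where a bag's sign records whether its chosen order agrees or disagrees with the ``increasing neighbourhood'' convention of Definition~\ref{def:proper} for the chain graph to the next bag, and where the parity of $-$ signs around the cycle is forced to be odd. Let $\mathcal T_k$ denote the hereditary closure of all such graphs, and let $T_{k,n}$ denote the ``canonical'' twisted cycle-in-a-chain on $n$ columns.

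Unbounded lettericity of $\mathcal T_k$ I would derive as a parity refinement of Theorem~\ref{thm-cc-unbdd-let}. Assume the lettericity of $\mathcal T_k$ is bounded by some $t$. Restricting (at the cost of a factor $k$) to representations using each letter in only one bag, a bounded-letter representation of $T_{k,n}$ for arbitrarily large $n$ forces, after pigeonholing, that $t$ of the $n$ columns are realized by the same subword on the same $k$ letters. Passing to that induced subgraph, we obtain a representation of $T_{k,t}$ using only $k$ letters, i.e., one letter per bag. Case analysis on such a representation (mirroring the closing paragraph of the proof of Theorem~\ref{thm-cc-unbdd-let}) shows the only viable decoder is cyclic, up to symmetry. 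But a cyclic decoder imposes a fixed parity of ``order-reversals'' around the cycle, and the defining feature of $\mathcal T_k$ is precisely that this parity is wrong — contradiction.

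For minimality, note that any $G \in \mathcal T_k$ is an induced subgraph of some $T_{k,N}$; hence it suffices to show that for every $p$, the subclass of $\mathcal T_k$ avoiding $T_{k,p}$ has lettericity bounded by some $f(k,p)$. I would do this by induction on $p$, reusing the red-spiral/blue-spiral decomposition from Theorem~\ref{thm-cc-main}: starting from an inner twisted cycle $C$, build the leftward spiral $C_b$ by repeatedly taking the leftmost neighbour in the next bag and the rightward spiral $C_r$ analogously, then split the graph into $G^L$, $G^M$, $G^R$ along these spirals. As before, a $T_{k,p-1}$ inside $G^L$ together with $C_b$ would yield a $T_{k,p}$ in the ambient graph (and symmetrically on the right), so the inductive hypothesis applies to the outer pieces; the middle piece $G^M$ decomposes into a bounded number of concentric shells whose adjacencies can be realized with one letter per non-empty $A_{i,j}$, exactly as in the existing proof. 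The only delicate new point is that each time a spiral crosses the twisted interface, ``leftmost'' and ``rightmost'' swap roles, so the whole spiral construction must be defined in terms of the signs $\varepsilon_i$ rather than blindly in terms of the orderings.

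The step I expect to be hardest is the base case, i.e., the analogue of Lemma~\ref{basecase} for $\mathcal T_k$. The cleanest route seems to be to enrich the conflict graph so that its arcs carry signs reflecting the twist, and then rule out \emph{sign-consistent} directed cycles under the hypothesis that $T_{k,1}$ and its CC-complement are forbidden; the analogue of Lemma~\ref{cycliciffdag} would then certify a representation with one letter per bag and a twisted-cyclic decoder. The subtlety is that the ``without loss of generality'' step \ref{wlogswitchdirection} of Lemma~\ref{basecase} relied on freely passing to a CC-complement to flip cycle orientations, and the twist restricts this freedom to a parity-preserving subset of complementations; the cyclic-relabelling step will similarly need to respect where the twist lives. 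If this book-keeping goes through, the inductive step transfers essentially verbatim and the conjecture follows; if it does not, the obstruction it reveals would itself be a new minimal class of unbounded lettericity to add to our list.
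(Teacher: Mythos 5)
First, a point of calibration: the paper does not prove this statement --- it is stated as a conjecture and left open --- so there is no proof of the authors' to compare yours against. What you have written is a programme rather than a proof, and you say as much yourself (``if this book-keeping goes through \dots; if it does not \dots''), so it cannot be accepted as a proof of the conjecture. That said, two concrete remarks. For the unboundedness half, you are working harder than necessary: the paper already claims (via a variant of Theorem~\ref{thm-gamma-lambda}) that the twisted graphs of Figure~\ref{fig-lambda-lambda} have unbounded $\lambda$, and since bounded lettericity implies bounded $\lambda$ in the hierarchy, unbounded lettericity is immediate. Your parity-of-reversals argument via a pigeonholed one-letter-per-bag representation is a reasonable alternative, but it is not needed and it is not fully carried out (the ``case analysis'' showing the decoder must be cyclic is asserted, not done, and in the twisted setting the pigeonholed columns are no longer induced $C_k$'s, so even the shape of that case analysis changes).

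The genuine gap is in the minimality half. All of the machinery of Theorem~\ref{thm-cc-main} that you propose to reuse --- the equivalence between acyclicity of the conflict graph and a one-letter-per-bag cyclic representation (Lemma~\ref{cycliciffdag}), the base case (Lemma~\ref{basecase}), and above all the red/blue spiral decomposition --- is built on the third axiom of chain circuits: that the decreasing order of neighbourhoods towards $A_{i+1}$ is the increasing order towards $A_{i-1}$, consistently around the whole cycle. The twisted graphs violate this at exactly one interface, and that violation is not cosmetic. The termination of the spiral, the fact that it closes up into a cycle, and the monotone ``winding to the right'' of the shells $S_j$ (which is what bounds the number of non-empty cells $A_{i,j}$ by a function of $p$ and makes $G^M$ expressible with boundedly many letters) all depend on ``leftmost neighbour'' meaning the same thing after a full trip around the cycle. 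With a twist, a walk that takes leftmost neighbours returns having swapped left and right once, so after two trips it may oscillate rather than spiral, and the sets $G^L, G^M, G^R$ need not even be well defined as ``strictly to the left/right of'' a closed curve. Likewise, the claim that a $T_{k,p-1}$ in $G^L$ plus the blue cycle yields a $T_{k,p}$ requires the shell-to-exterior adjacencies across the twisted interface to match the $T_{k,p}$ template, which you have not checked. You correctly locate these as the pressure points, but locating them is not the same as resolving them; as it stands, the argument establishes nothing beyond what the paper already asserts, and the conjecture remains open.
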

	
	
	
	\section{An alternate terminology}
	\label{sec:loh}
	
	We believe that the problems we have described so far are very attractive. However, it is not entirely clear whether they are formulated ``in the right way''. To elaborate, let us look more closely at the transition between $\lambda$ and lettericity. For one, there is an issue of portability: it is clear that there is a heavy analogy between what happens for graphs, and what happens for permutations. It would be very useful to have a systematic way to transfer results from one world to the other. 
	
	A second issue is that gridding matrices and decoders come with a lot of superfluous information. We almost exclusively care only about bags with non-trivial relationships between them: collinear cells, or pairs of letters with exactly one arc between them in the decoder. We usually do not even care which one of the two arcs appears in the decoder, or whether the row or column's signs are 1 or -1. This information is not useful during most proofs; it is in fact just a burden that needs to be carried around, and that often obfuscates the real intuition behind the arguments. It would be desirable to have a ``clean'' environment to work with, in which this is not an issue.
	
	To tackle those problems, we propose the notion of {\em locally ordered hypergraphs}\footnote{Locally ordered hypergraphs are also discussed briefly in \cite{bddletgg}; they can be used to produce an alternate proof of the main result featured there.} as an attempt to abstract local consistency. We make no claim that this is the best possible way to achieve this -- only that there is a need for a tool that does this. 
	
	\begin{definition}
		A {\em locally ordered hypergraph} (``LOH'' for short) $\mathcal H$ is a hypergraph $(X, E)$ with no isolated vertices, where any hyperedge $e \in E$ has a linear order $\leq_e$ on its elements, which we call the {\em local order} of $e$. The atoms of the algebra generated by $E$\footnote{The algebra generated by $E$ is the smallest family of subsets of $X$ containing all elements of $E$, and closed under complementations, intersections and unions. The atoms are its minimal non-empty elements.} are called the \emph{cells} of the hypergraph, so that $X$ is partitioned by the cells. Those are denoted by $A_{e_{i_1}, e_{i_2}, \dots, e_{i_r}}$, where the $e_{i_\alpha}, \alpha \in [r]$ are the hyperedges containing $A_{e_{i_1}, e_{i_2}, \dots, e_{i_r}}$. In addition, we have the following  
		
		\begin{itemize}
			\item[] {\em local consistency condition}: on any cell $A_{e_{i_1}, \dots, e_{i_r}}$, the linear orders induced by $\leq_{e_{i_1}}, \dots, \leq_{e_{i_r}}$ all agree.
		\end{itemize}
		
	\end{definition}
	
	\begin{definition}
		A LOH $\mathcal H$ has the \emph{global consistency property} (or is \emph{globally consistent}) if there exists a linear order $\leq$ on its vertices which restricts to $\leq_e$ on each hyperedge. 
		
		An alternative way of defining global consistency is via a conflict graph $\conf(H)$: this is the directed graph on $X$, with arcs $(x, y)$ for any elements $x \neq y$ with $x \leq_e y$ for some $e$. Then using topological sorting, $\mathcal H$ is globally consistent if and only if its conflict graph is acyclic.
	\end{definition}

	One can obtain LOHs in straightforward ways from locally consistent chain partitions of graphs, or monotone griddings by PMMs. The main benefit of using them is that they allow one to represent the relevant information about their objects more neatly. In particular, one can produce formulations of what we have shown about the transition between $\lambda$ and lettericity in the language of LOHs. To see how this can be achieved, we need some further definitions.
	
	\begin{definition}
		Let $\mathcal H = (X, E)$ be a LOH with local hyperedge orders $\leq_e$, and let $x \in X$. The {\em split} of $\mathcal H$ at $x$ is the LOH $\mathcal H^x = (X \setminus \{x\}, E')$, where $$E' := \{e \in E: x \notin e\} \cup \{e \cap \{y \in e: y <_e x\}: x \in e\} \cup \{e \cap \{y \in e: y >_e x\}: x \in e\}$$ (we only consider non-empty sets in the union). The local orders on the new hyperedges are inherited from $\mathcal H$. 
	\end{definition}
	
	The intuition behind splitting a LOH at an element $x$ is that we remove all of the comparisons between elements smaller than $x$ and elements larger than $x$. In other words, splitting a LOH brings us closer to global consistency, since we are deleting arcs in the conflict graph. In the monotone gridded setting, it is analogous to adding the horizontal and vertical lines through $x$ to the gridding -- just as the row and column of the gridding to which $x$ belongs get split into two, so do all hyperedges of the LOH containing $x$. Indeed, this is also the role of the red and blue edges from the proof of Theorem~\ref{thm-cc-main}. We can thus define a parameter for LOHs measuring how far they are from being globally consistent:
	
	\begin{definition}
		The {\em global inconsistency} of a LOH is the smallest number of splits needed to make the LOH globally consistent.
	\end{definition}
	
	In this language, Problem~\ref{lambda-let} becomes: what conditions do we need to put on the LOHs in some given class to guarantee that their global inconsistency is bounded? We state this as an open problem, identical in spirit to Problem~\ref{lambda-let}:
	
	\begin{problem}
		What are the obstacles to bounded inconsistency of LOHs?
	\end{problem}
	
	\medskip
	
	We may also formulate an analogue of Theorem~\ref{thm-cc-main} in this language.
	
	\begin{definition}
		A LOH is {\em $k$-cyclic} if its line graph is a cycle on $k$ vertices. 
	\end{definition}
	
	\begin{notation}
		Let $C_{k, l}$ be as in Notation~\ref{not-cycles}, enhanced with an orientation of the edges from $v_{i, j}$ to $v_{i + 1, j'}$ for all appropriate $i, j, j'$.
	\end{notation}
	
	\begin{theorem} \label{thm-cyclic-loh}
		Let $k, l \in \mathbb N$ be fixed. Let $\mathcal X$ be a class of $k$-cyclic LOHs such that their conflict graphs avoid $C_{k, l}$ as an induced subgraph. Then $\mathcal X$ has bounded global inconsistency. 
	\end{theorem}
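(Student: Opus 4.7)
The plan is to adapt, within the LOH framework, the proof of Theorem~\ref{thm-cc-main}, under the correspondence that in a $k$-cyclic LOH the cells play the role of the bags $A_i$ of a $k$-chain circuit, the local orders on the hyperedges encode the properly ordered chain graphs between consecutive bags, and the conflict graph of the LOH plays the role of the conflict graph of a chain circuit. I would proceed by induction on $l$.

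For the base case $l=1$, I claim that a $k$-cyclic LOH whose conflict graph avoids the oriented $C_{k,1}$ is already globally consistent, i.e.\ zero splits suffice. Following Lemma~\ref{basecase}, one takes a shortest directed cycle $v_1 \to v_2 \to \cdots \to v_t \to v_1$ in $\conf(\mathcal H)$. Each arc $v_r \to v_{r+1}$ comes from a hyperedge $e_{i_r}$ containing both vertices, so the sequence $(i_r)$ traces a closed walk in the line graph of $\mathcal H$, which is itself a $k$-cycle. The shortest-cycle assumption together with the local consistency condition forces $(i_r)$ to wind exactly once around the line graph, with no pair of consecutive hyperedges used twice in succession (otherwise a chain-graph-style shortcut yields a strictly shorter cycle). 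This pins down the $v_r$ as lying in $k$ distinct cells meeting one hyperedge each, and displays them as an induced $C_{k,1}$ in the conflict graph, a contradiction.

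For the inductive step, if the conflict graph contains no directed $k$-cycle we are done by the base case analysis; otherwise, fix an induced $C_{k,1}$ as a seed and construct a ``blue'' and a ``red'' spiral, analogously to the proof of Theorem~\ref{thm-cc-main}. Starting from a vertex of the seed in hyperedge $e_i$, one repeatedly follows the conflict arc to the $\leq_{e_{i+1}}$-minimum (respectively $\leq_{e_{i+1}}$-maximum) vertex of $e_{i+1}$ among those conflict-adjacent. Local consistency guarantees these choices are well-defined and that the resulting sequences are monotone within each cell. By the same counting as for the disjoint $S_j$ in Theorem~\ref{thm-cc-main}, the spirals can visit each cell at most $O(l)$ times, because any longer spiral would realise an induced $C_{k,l}$ in the conflict graph. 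Splitting $\mathcal H$ at every vertex of both spirals thus uses at most $c(k) \cdot l$ splits and decomposes the remainder of the LOH into an outside-left and outside-right piece (both $k$-cyclic LOHs whose conflict graphs avoid $C_{k,l-1}$, since otherwise glueing back the boundary spiral produces a $C_{k,l}$ in $\mathcal H$), together with a middle piece contained in the spirals themselves, hence of bounded size. The outside pieces are handled by the inductive hypothesis, the middle piece trivially, and summing the contributions yields a bound on global inconsistency of the form $O_k(l^2)$.

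The main obstacle I anticipate is the correct formalisation of ``left'' and ``right'' in the abstract LOH setting: in the chain circuit proof this was inherited from the fixed cyclic bag partition, whereas here one only has the local orders on the hyperedges and must weave them into a coherent transverse direction via the local consistency condition. A closely related subtlety is proving that the spirals really are monotone in their cell-visits (the analogue of the statement that $v_{i,j_2}$ lies to the right of $v_{i,j_1}$ when $j_2>j_1$), which requires careful bookkeeping of how the two rotational orientations propagate around the cyclic line graph. Once this structural backbone is in place the combinatorial counting is routine, and the uniform bound on global inconsistency falls out.
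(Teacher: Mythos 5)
Your overall strategy is exactly the one the paper intends: the paper's own ``proof'' of Theorem~\ref{thm-cyclic-loh} is only a two-sentence sketch deferring to Theorem~\ref{thm-cc-main} (``the heart of the argument stays the same, so we skip the details''), and your base case, your red/blue spiral construction, and your induction on $l$ are faithful transplants of that argument into the LOH setting. You have also correctly identified the main point the paper itself flags as delicate, namely formalising ``left'' and ``right'' via infima/suprema inside the appropriate cells.

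There is, however, one concrete misstep in your inductive step: the middle piece is \emph{not} ``contained in the spirals themselves'' and is \emph{not} of bounded size. In the proof of Theorem~\ref{thm-cc-main}, only the set of spiral vertices $\bigcup_j S_j$ has bounded cardinality (at most $k(2p+2)$); the region between consecutive spiral cycles consists of the sets $A_{i,j}$, which can be arbitrarily large, and the bulk of that proof is devoted precisely to handling $G^M \setminus \bigcup_j S_j$. The correct LOH analogue is not that the middle is small, but that after splitting at the (boundedly many) spiral vertices, every remaining conflict arc in the middle goes ``forward'' along the unrolled sequence of cells $A_{k,0}, A_{k-1,0}, \dots, A_{1,0}, A_{k,1}, \dots$, so the middle piece becomes globally consistent with no further splits. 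As written, your argument would charge an unbounded number of splits to the middle (or, read charitably, silently omits the step that actually makes the theorem work). A minor additional quibble: the recursion $f(l) \le 2f(l-1) + c(k)\,l$ coming from handling both outside pieces at level $l-1$ gives a bound exponential in $l$, not $O_k(l^2)$; this is immaterial for the statement, which only asserts boundedness, but the claimed quantitative bound does not follow from your decomposition.
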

	
	This statement can be proved in a way completely analogous to Theorem~\ref{thm-cc-main}; we can define a way of gluing together LOHs such that one of them is ``smaller'' than the other -- this allows us to use an induction argument. The red and blue edge construction can then be reproduced by defining a notion of ``infimum'' and ``supremum'' for elements in some hyperedge of the LOH (strictly speaking, we need to be a bit more careful than that and make sure the infimum and supremum are in the appropriate cells). The heart of the argument stays the same, so we skip the details.
	
	\medskip
	
	We believe that the most natural way to state the problems regarding the passage from a locally consistent to a globally consistent regime is in the language of LOHs. In particular, one could expect cleaner descriptions of the obstacles in terms of forbidden structures (perhaps subgraphs that are not necessarily induced) in the conflict graphs of LOHs. To obtain those formulations, it would be interesting to attempt to develop a more robust and general theory of LOHs. 
	
	An interesting final question is whether one can define similar settings to abstract the other transitions at play: 
	
	\begin{problem}
		Develop frameworks analogous to LOHs for the transitions between $\gamma$ and $\sigma$, and between $\sigma$ and $\lambda$.
	\end{problem}

%



\begin{thebibliography}{9}
		\bibitem{geometric} 
		M.H. Albert, M.D. Atkinson, M. Bouvel, N. Ru\v skuc, V. Vatter, Geometric grid classes of permutations. {\it Trans. Amer. Math. Soc}. 365 (2013), 5859--5881.
		
		\bibitem{bddletgg}
		B. Alecu, R. Ferguson, M. Kant\'{e}, V. Lozin, V. Vatter, V. Zamaraev, Letter graphs and geometric grid classes of permutations. Upcoming preprint (2021).
		
		\bibitem{3let} 
		B. Alecu, V. Lozin, D. de Werra, V. Zamaraev, Letter graphs and geometric grid classes of permutations: characterization and recognition. {\it Discrete Applied Math.} 283 (2020), 482--494.
		
		\bibitem{alecu-2p3}
		B. Alecu, A. Atminas, V. Lozin, D. Malyshev, Combinatorics and algorithms for quasi-chain graphs. Preprint, available at \url{https://arxiv.org/abs/2104.04471}.
		
		\bibitem{primelet}
		R. Ferguson, V. Vatter, Bounded Lettericity for Classes with Finitely Many Prime Graphs. Preprint, available at \url{https://arxiv.org/abs/2104.03055}.
		
		\bibitem{atkinson-restricted-perm}
		M. D. Atkinson, Restricted permutations. {\it Discrete Math.} 195 (1999), 27--38.
		
		\bibitem{atkinson-pwo-closed}
		M. Atkinson, M. Murphy, N. Ru\v{s}kuc, Partially Well-Ordered Closed Sets of Permutations. {\it Order} 19 (2002), 101--113.
		
		\bibitem{star-forests} A. Atminas, Classes of graphs without star forests and related graphs. Preprint, available at \url{https://arxiv.org/abs/1711.01483} (2017).
	
		\bibitem{baker-partial-orders-2}
		K.A. Baker, P.K. Fishburn, F.S. Roberts, Partial Orders of Dimension 2. {\it Networks} 2 (1972), 11--28.
	
		\bibitem{diekert-traces}
		V. Diekert, Combinatorics on traces. {\it Lecture Notes in Computer Science} 454 (1990). 
		
		\bibitem{dushnik-perm-comp-cocomp}
		B. Dushnik, E. Miller, Partially Ordered Sets. {\it American J. Math.} 63 (3) (1941), 600--610.
	
		\bibitem{gallai-transitiv}
		T. Gallai, Transitiv orientierbare Graphen. {\it Acta Math. Acad. Sci. Hung.} 18 (1967), 25--66 (in German).  
	
		\bibitem{higman-lemma}
		G. Higman, Ordering by divisibility in abstract algebras. {\it Proc. London Math. Soc.} s3-2 (1) (1952), 326--336.
		
		\bibitem{monotone} 
		S. Huczynska, V. Vatter, Grid classes and the Fibonacci dichotomy for restricted permutations. {\it Electron. J. Combin.} 13 (2006), R54, 14 pp.
		
		\bibitem{murphy-profile}
		M. Murphy, V. Vatter, Profile classes and partial well-order for permutations. {\it Electron. J. Combin.} 9 (2) (2002).
				
		\bibitem{letter-graphs}
		M. Petkov\v sek, Letter graphs and well-quasi-order by induced subgraphs.
		{\it Discrete Mathematics}, 244 (2002) 375--388.
		
		\bibitem{vatter-pwo-monotone}
		V. Vatter, S. Waton, On partial well-order for monotone grid classes of permutations. {\it Order} 28 (2011), 193--199.
		
		\bibitem{van-der-waerden-beweis}
		B.L. van der Waerden, Beweis einer Baudetschen Vermutung, {\it Nieuw Arch. Wisk.} 15 (1927), 212--216 (in German).
		
	\end{thebibliography}
\end{document}